\newif\ifsidma
\newtheorem{theorem}{Theorem}[section]
\newtheorem{lemma}[theorem]{Lemma}
\newtheorem{proposition}[theorem]{Proposition}
\newtheorem{corollary}[theorem]{Corollary}
\newcommand{\red}{\xspace\mbox{\textcolor{red}{$\pmb\rightarrow$}}\xspace}
\newcommand{\blue}{\xspace\mbox{\textcolor{cyan}{$\pmb\rightarrow$}}\xspace}
\newcommand{\black}{\xspace\mbox{\textcolor{black}{$\pmb\rightarrow$}}\xspace}
\newcommand{\new}{\xspace\mbox{\textcolor{black}{$\pmb\rightarrow$}}\xspace}
\newcommand\notsotiny{\@setfontsize\notsotiny{8}{8}}
\newcommand\Tstrut{\rule{0pt}{2.6ex}}         
\newcommand\Bstrut{\rule[-0.9ex]{0pt}{0pt}}   
\tikzstyle{X} = [circle,draw=black!,minimum width=2pt, inner sep=2pt]
\tikzstyle{Xr} = [circle, draw=black!,minimum width=1.5pt, inner sep=1.5pt,fill=red!50]
\tikzstyle{A} = [circle,fill=black!,minimum width=.1em, inner sep=.1em]
 \tikzstyle{B} = [circle,fill=cyan,draw=black!,minimum width=.1em, inner sep=.2em]
 \tikzstyle{W} = [circle,fill=red!,draw=black!,minimum width=.1em, inner sep=.2em]
 \tikzstyle{G} = [circle,fill=green!,draw=black!,minimum width=.1em, inner sep=.2em]
\newcommand{\commentTikz}[1]{} 
 \newcommand{\FaceGraph}{key graph\xspace}
 \newcommand{\CanonicalVector}{\mbox{$\boldsymbol{\rho^*}$}}
 \newcommand{\LengthCone}{\mbox{$Q_P^*$}\xspace}
 \newcommand{\GapSet}{\mathcal{G}}
 \newcommand{\LeftBorderlineSet}{\mathcal{L}}
 \newcommand{\RightBorderlineSet}{\mathcal{R}}
\DeclareMathOperator{\Conv}{Conv}
\title{The length polyhedron of an interval order}
\author{Csaba Bir\'o\thanks{Department of Mathematics, University of Louisville, Louisville, Kentucky, U.S.A.
               (\email{csaba.biro@louisville.edu}, \email{kezdy@louisville.edu}).}
       \and Andr\'{e} E. K\'{e}zdy\footnotemark[1]
       \and Jen\H{o} Lehel\footnotemark[1] \thanks{Alfr\'ed R\'enyi Institute of Mathematics, Budapest, Hungary 
               \email{lehelj@renyi.hu}).}}
\author{Csaba Bir\'o}
\address{Department of Mathematics, University of Louisville, Louisville, Kentucky, U.S.A.}
\email{csaba.biro@louisville.edu}
\author{Andr\'{e} E. K\'{e}zdy}
\address{Department of Mathematics, University of Louisville, Louisville, Kentucky, U.S.A.}
\email{kezdy@louisville.edu}
\author{Jen\H{o} Lehel}
\address{Alfr\'ed R\'enyi Institute of Mathematics, Budapest, Hungary} 
\email{lehelj@renyi.hu}
\begin{document}

\ifsidma\maketitle\fi

\begin{abstract}
The length polyhedron $Q_P$ of an interval order $P$ is the convex hull of integral vectors representing the
interval lengths in interval representations of $P$. This polyhedron has been studied by
various authors, including Fishburn and Isaak. Notably, $Q_P$ forms a pointed
affine cone, a property inherited from being a projection of the representation polyhedron, a
structure explored also by Doignon and Pauwels. The apex of the length polyhedron corresponds
to Greenough's minimal endpoint representation, which is, in fact, the length vector of the
canonical interval representation---an interval representation that minimizes the sum of the interval
lengths.

Building on a combinatorial perspective of canonical representations, we refine Isaak's graph-theoretical model by introducing a new and simpler directed graph. From directed cycles of this
key digraph, we extract a linear system of inequalities that precisely characterizes the length
polyhedron $Q_P$. This combinatorial approach also reveals the unique Hilbert basis of the polyhedron. We prove that the intersection graph of the sets
corresponding to these binary rays are Berge graphs; therefore they are perfect graphs. As a
result, for interval orders with bounded width, the length polyhedron has a polynomial-sized Hilbert
basis, which can be computed in polynomial time. We also provide an example of interval orders with a Hilbert basis of exponential size. In a
companion paper we determine the Schrijver system for the length polyhedron. We conclude with
open problems. 
  \hfill \textcolor{cyan}{/\today/}
\end{abstract}

\ifsidma\begin{keywords}\else\keywords{\fi
interval order, minimal endpoint representations, key graph model, conic polyhedron, Hilbert basis, perfect graphs%
\ifsidma\end{keywords}\else}\fi

\ifsidma\begin{MSCcodes}\else\subjclass{\fi
06A06, 90C27, 05C62, 52B05, 05C20%
\ifsidma\end{MSCcodes}\else}\fi

\ifsidma\else\maketitle\fi

 \section{Introduction}
 
In this paper we investigate  the interval length polyhedron whose origin is implicit in Fishburn's work on  interval orders  \cite{FishburnBook}. 
A partially ordered set  $P=(X,\prec)$ is an {\it interval order} if there is an assignment of compact intervals
from $\mathbb{R}$ to elements of $X$, say $I_x = [\ell_x, r_x]\subset \mathbb{R}$ is assigned to $x \in X$, such that $x\prec y$ if and only if $r_x < \ell_y$ in $\mathbb{R}$. 
Moreover, we require  $r_x+1 \leq \ell_y$, for all pairs $x\prec y$, an assumption that makes possible to focus on  {\it natural representations}
with  integral interval endpoints.  
For $X=\{1,\ldots,n\}$,  the $2n$ variables $\{\ell_i,r_i\}_{i=1}^n$  define
the $\rho$-vector $(\rho_1,\ldots,\rho_n)\in\mathbb{R}^n$, where $\rho_i=r_i-\ell_i$, $1\leq i\leq n$. The existence of an interval representation of $P$ with a given $\rho$-vector can be stated as the feasibility of an appropriate  system of inequalities on the reals.
The interval {\it length polyhedron} $Q_P\subset \mathbb{R}^n$ is defined to be the set of all $\rho$-vectors for which this system of inequalities is feasible. 

Fishburn gave the definition of  a system of length inequalities of the form $$\sum\limits_{a\in A}\rho_a\ <\ \sum\limits_{b\in B}\rho_b,$$ where $A,B\subset X$ are disjoint non-empty sets satisfying the property, that these inequalities
{\it need} to be true in {every} interval representation of $P$. He proved \cite[Theorem 2, p. 126]{FishburnBook},  if a vector $\rho\in \mathbb{R}^n$ satisfies the length inequalities, then there is an interval representation of $P$ where the interval lengths are equal to the components of  $\rho$. Fishburn also noticed that the feasibility of this system of inequalities on real numbers is equivalent with the feasibility on integers.  
Isaak \cite{Isaak1993, Isaak2009} simplified Fishburn's length inequalities  and developed an infeasibility test in terms of a weighted bipartite directed graph model.  

In 2011 Doignon and Pauwels \cite{DoignonPauwels} considered a related polyhedron associated with interval orders  the {\it 
location polyhedron} of the interval order $P$. 
 {Their location polyhedron, $R_\epsilon^P$, where $\epsilon>0$ is fixed,  
consists of  all vectors $(\ell_1,r_1,\ldots,\ell_n,r_n)\in \mathbb{R}^{2n}$
 for which 
 $[\ell_i,r_i]_{i=1}^n$ is a real interval representation of $P$  
 with the additional condition that $r_x+\epsilon\leq \ell_y$, for every pair $x\prec y$.}  
 They proved that 
the  location polyhedron $D_P=R_1^P$  is a pointed affine cone and, although they did not realize it, its apex\footnote{\ The vertex of a pointed cone is called here {\it apex}.} 
corresponds  to Greenough's minimal endpoint representation  \cite[Theorem 2.4]{Greenough}.

In our study we introduce key graphs by applying Fourier-Motzkin Elimination to the linear system defining the location polyhedron. The key graph is  a colored directed graph model of the interval order $P$  inspired by Greenough's minimal endpoint representation of $P$, and different from the one Isaac used \cite{Isaak1993}.
The key graph encodes a linear system defining the length polyhedron $Q_P$.
We show that the length polyhedron $Q_P$ is a pointed affine cone  with its apex at the length vector of the canonical representation of $P$. In  Section \ref{Hilbertsection} we investigate the V-description  of the cone $Q_P$ giving the Hilbert basis. In a companion paper \cite{KeLe} we consider the H-description of the length polyhedron and determine the unique Schrijver system.

Our motivation to study this polyhedron $Q_P$ arises from an attempt to attack $k$-count problems, questions about interval representations using at most $k$ distinct interval lengths.
A systematic study of the $k$-count representations for interval orders 
was initiated by Fishburn a half a century ago \cite{FishburnBook}. Since then several remarkable results were obtained by Isaak \cite{Isaak1993,Isaak2009} with several collaborators 
\cite{BoyadzhiyskaDissertation,Gordon,BGT}; and also by other groups with  
Oliviera and  Szwarcfiter \cite{FMOS,JLORS}. 
Nevertheless, 
the $k$-count problems are wide open in general and remain a challenge even for $k=2$. Investigating the polyhedra attached to interval orders, in particular, focusing on the length polyhedron, may help understand the 
difficulty of the 2-count problems. Related problems are addressed in Section \ref{Conclusionsection}; some progress is reported in this direction separately \cite{BiKeLe}. Dimension bounds for posets of bounded interval count are studied in \cite{BiroWan}.

Interval orders have a rich history and have been applied to a wide range of fields,
including scheduling, resource allocation, temporal reasoning (e.g., language processing and
artificial intelligence),
time-series analysis (e.g., event prediction, pattern detection, segmentation), psychology
(e.g., preference modeling, decision making), bioinformatics (e.g., protein interaction networks,
gene expression analysis),
economic modeling (e.g., utility, market competition, consumer behavior), data visualization, and
software engineering (e.g., version control).

Early work in mathematical psychology, particularly on Guttman scales, led Ducamp and Falmagne
\cite{DuFa}  to
explore related relations, which were later named biorders by Doignon, Ducamp, and Falmagne
\cite{DoDuFa}.
Interval orders, a special class of biorders, were characterized by Fishburn \cite{FishburnBook}. Doignon and
Pauwel's research \cite{DoignonPauwels} on the
representation polyhedron of interval orders,  discussed later, stemmed in part from mathematical
psychology
questions related to modeling binary preferences in knowledge spaces, a key area of knowledge
space theory.
This theory was originally developed by Doignon and Falmagne \cite{DoFa}.

A significant modern application of this work is the ALEKS system, a personalized learning platform
for
students co-founded by Jean-Claude Falmagne and Nicolas Thiery in 1997. ALEKS leverages
learning spaces,
a type of knowledge space that models an individual student's domain knowledge as a
combinatorial structure.
 \subsection{Overview}
Doignon and Pauwels gave an elegant H-description \cite[Theorem 4]{DoignonPauwels} and a V-description  [Ibid.,Theorem 6] of their location polyhedron $D_P$. 
The length polyhedron $Q_P$ and their location polyhedron $D_P$  
are derived by linear transformations and projections 
from the full representation polyhedron $R_P\subset\mathbb{R}^{3n}$
 defined by  the feasible vectors $(\ell_1,r_1,\ldots,\ell_n,r_n,\rho_1,\ldots,\rho_n)$.
The length polyhedron  is the result of a linear transformation of the location domain in $\mathbb{R}^{2n}$ followed by a projection into the length domain in $\mathbb{R}^{n}$. 
 In Section \ref{Fullsection} we present the full polyhedron $R_P$ in details.  We give alternate combinatorial proofs of  Doignon and Pauwels results concerning the location polyhedron $D_P$  in Section \ref{Locsection}.

Our characterization of the length polyhedron $Q_P$  is more complicated, and
improves Fishburn's length inequalities \cite[p.126]{FishburnBook}. In Section \ref{Lengthsection}  we show that
these inequalities can be extracted from directed cycles of the key graph. The key graph could also be defined in terms of noses and hollows, concepts introduced 
 by Pirlot \cite{Pirlot} while investigating reduced semiorders.
 These concepts were  used by Doignon and Pauwels \cite{DoignonPauwels} in describing the location polyhedra of interval orders. 

In Section \ref{Hilbertsection} the unique Hilbert basis of the cone $Q_P$  is determined, and we give examples to show that exponentially many facet-defining inequalities may appear, though only polynomial many are possible for bounded width interval orders. For all of our examples we apply Johnson's algorithm \cite{Johnson}, which finds all directed cycles in a directed graph.

Section \ref{Canonicalsection} introduces the canonical representation  whose fundamental role among all representations of an interval order form the starting point of our investigations. We use different techniques from linear algebra, combinatorial optimization, and graph theory.  In the next subsection we gather the necessary definitions and basic results that we will use in the paper. We conclude with open problems in Section \ref{Conclusionsection}.
 
\subsection{Preliminaries}
Ducamp and Falmagne \cite{DuFa} (see also Fishburn \cite{Fish2+2})  characterized interval orders as partial orders with no {\bf 2 + 2} as a subposet, that has four elements, $\{a, b, c, d\}$  
and two relations, $a\prec b$ and $c\prec d$.   
An interval representation of an interval order with integral endpoints is called here an {\it integral representation};  an integral representation with non-negative integral endpoints is referred as to a {\it natural representation}. 

The forbidden subposet characterization  of an  interval order $P=(X,\prec)$ implies that the down-sets 
$D(z)=\{x\in X : x\prec z\}$ are ordered by inclusion, and the same is true for
 the up-sets $U(z)=\{x\in X : z\prec x\}$, $z\in X$. Greenough proved \cite[Theorem 2.3]{Greenough} that the common number $m$ of the distinct down-sets and up-sets, called the {\it magnitude} of $P$, gives the smallest number of distinct endpoints of an interval representation of $P$. The {\it canonical representation} of $P$ is defined here as a natural representation with endpoints $0,1,\ldots,m-1$, where $m$ is the magnitude of $P$. The linear orders in our examples are distinguished by labels  using their ascent sequence identification, ASI, due to Bousquet et al. \cite{ascent2010}.  
 For further terms and definitions  in poset theory see Trotter \cite{TrotterBook}. 

The collection of the length vectors of an interval representation for a fixed interval order $P$ forms a convex polyhedron. A linear system of inequalities defining a polyhedron has several equivalent forms depending on the variables we keep in the system.
There are different polyhedra in Euclidean spaces of different dimension; one polyhedron is obtained from another by linear transformations and projections. An obvious algorithm to perform a projection by manipulating the defining inequalities is the Fourier-Motzkin Elimination procedure explained and used in Section \ref{Lengthsection}. 

{A (rational) polyhedron is called an {\em integral polyhedron} if it is the convex hull of its integral points. Alternatively,  every non-empty face of an integral polyhedron contains an integral point.}  The various polytopes attached to interval representations
are integral polyhedra.  Total unimodularity  
of  the coefficient matrices of the defining linear systems plays an  important role in this property. A matrix is {\em totally unimodular} if all of its square submatrices have a determinant equal to $0$, $1$, or $-1$.
 A $\{0,\pm 1\}$-matrix $A$ has an {\em equitable bicoloring} if its columns can be partitioned into black columns and white columns
so that, for every row of $A$, the sum of the entries in the black columns differs from the sum of the entries in the white columns by at most one.   
\begin{theorem} [Ghouila-Houri \cite{G-H} (1962)]
 \label{Ghouila-Houri} 
 A $\{0,\pm 1\}$-matrix $A$ is totally unimodular if and only if
	every submatrix of $A$ has an equitable bicoloring.
\end{theorem}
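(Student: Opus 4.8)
The plan is to prove the two implications separately, relying throughout on the fact that every submatrix of a totally unimodular matrix is again totally unimodular; this reduces the forward direction to producing a single equitable bicoloring of $A$ itself, which is then applied verbatim to each submatrix.

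For the direction ``$\Rightarrow$'', I would encode a bicoloring by a vector $x\in\{0,1\}^n$, with $x_j=1$ marking the white columns and $x_j=0$ the black ones. Writing $\mathbf 1$ for the all-ones vector and $d=A\mathbf 1$ for the vector of row sums, the equitable condition for row $i$ becomes $|d_i-2(Ax)_i|\le 1$, that is, $(Ax)_i\in\{\lfloor d_i/2\rfloor,\lceil d_i/2\rceil\}$. I would then consider the polytope
\[
  P=\bigl\{\,x\in\mathbb R^n : \lfloor d/2\rfloor\le Ax\le \lceil d/2\rceil,\ \mathbf 0\le x\le \mathbf 1\,\bigr\},
\]
which is nonempty since $x=\tfrac12\mathbf 1$ satisfies $Ax=d/2$. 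Its constraint matrix $[A;-A;I;-I]$ is totally unimodular whenever $A$ is, and all right-hand sides are integral; hence by Cramer's rule every vertex of $P$ is integral. Being nonempty and bounded, $P$ has a vertex, giving some $x^*\in\{0,1\}^n\cap P$, which is precisely the desired equitable bicoloring.

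For the direction ``$\Leftarrow$'', I would argue by induction on the order $k$ of a square submatrix $B$, proving $\det B\in\{0,\pm1\}$; the case $k=1$ is immediate and $\det B=0$ is trivial, so let $B$ be a nonsingular $k\times k$ submatrix with $\delta=\det B$. By the induction hypothesis every $(k-1)\times(k-1)$ subdeterminant lies in $\{0,\pm1\}$, so the first column $b^*$ of the adjugate of $B$ is a $\{0,\pm1\}$-vector, and $Bb^*=\delta\,e_1$. Let $S=\{j:b^*_j\ne 0\}$, nonempty since $\delta\ne 0$; applying the hypothesis to the columns of $B$ indexed by $S$ yields a coloring $\varepsilon\in\{\pm1\}^S$, which I extend by zeros to $\tilde\varepsilon\in\{0,\pm1\}^k$. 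The crux is a parity comparison: since $(Bb^*)_i=\sum_{j\in S}B_{ij}b^*_j$ equals $0$ for $i\ne 1$ and $\delta$ for $i=1$, and $b^*_j\equiv\varepsilon_j\equiv 1\pmod 2$ on $S$, the sum $\sum_{j\in S}B_{ij}\varepsilon_j$ is even for $i\ne 1$ yet bounded by $1$ in absolute value, hence $0$. Thus $B\tilde\varepsilon=c\,e_1$ with $c=\sum_{j\in S}B_{1j}\varepsilon_j\in\{-1,0,1\}$; if $c=0$ then $B\tilde\varepsilon=0$ with $\tilde\varepsilon\ne 0$, contradicting nonsingularity, so $c=\pm1$. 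Comparing $B\tilde\varepsilon=c\,e_1$ with $Bb^*=\delta\,e_1$ gives $\tilde\varepsilon=(c/\delta)b^*$, and evaluating at any $j\in S$, where $\tilde\varepsilon_j=\pm1$ and $b^*_j=\pm1$, forces $|\delta|=1$, completing the induction.

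I expect the main obstacle to be the backward direction: one must select the right vector (the adjugate column, turned into a $\{0,\pm1\}$-vector by induction) and then marry the equitable bicoloring on its support to a parity argument so that nonsingularity pins the residual first-row sum to $\pm1$. The forward direction is comparatively routine once the half-integral point $\tfrac12\mathbf 1$ is used to certify $P\ne\emptyset$.
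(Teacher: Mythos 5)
The paper does not prove this statement at all: it is quoted as a classical theorem of Ghouila-Houri (1962) with a citation, and is used later only as a tool (in Proposition \ref{Rintegral}), so there is no in-paper proof to compare against. Your argument is correct and is essentially the standard textbook proof (e.g., Schrijver's \emph{Theory of Linear and Integer Programming}). The forward direction via the polytope $P=\{x : \lfloor d/2\rfloor\le Ax\le\lceil d/2\rceil,\ 0\le x\le 1\}$ is sound: $\tfrac12\mathbf 1$ certifies nonemptiness, the stacked constraint matrix inherits total unimodularity, and an integral vertex decodes to an equitable bicoloring of $A$, which then passes to every submatrix since submatrices of TU matrices are TU. The backward direction is also complete: the induction hypothesis makes the adjugate column $b^*$ a $\{0,\pm1\}$-vector with $Bb^*=\delta e_1$; the equitable bicoloring $\varepsilon$ of the columns indexed by $S=\operatorname{supp}(b^*)$ agrees with $b^*$ mod $2$ entrywise, so the parity comparison forces $B\tilde\varepsilon=ce_1$ with $c\in\{0,\pm 1\}$, nonsingularity rules out $c=0$, and the identity $\tilde\varepsilon=(c/\delta)b^*$ evaluated on $S$ pins $|\delta|=1$. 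The only stylistic remark is that for the forward direction you could invoke the paper's Theorem \ref{HoffmanKruskal} (Hoffman--Kruskal) directly on the system $[A;-A;I]\,x\le(\lceil d/2\rceil,-\lfloor d/2\rfloor,\mathbf 1)$, $x\ge 0$, rather than re-deriving vertex integrality by Cramer's rule, but that is a matter of taste, not a gap.
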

\begin{theorem} [Hoffman and Kruskal \cite{HoffmanKruskal}(1956)] 
\label{HoffmanKruskal} 
{\it An integral matrix $A$ is totally unimodular if and only if for each integral vector $b$ the polyhedron $\{x : x\geq 0, Ax\leq b\}$ is integral.}
\end{theorem}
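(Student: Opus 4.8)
The plan is to prove both implications directly from the definitions, using only Cramer's rule and the vertex description of a polyhedron; Theorem~\ref{Ghouila-Houri} is not needed.

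For the forward implication (total unimodularity $\Rightarrow$ integrality), I would first note that $\{x : x\geq 0,\ Ax\leq b\}$ is contained in the nonnegative orthant, hence line-free, hence pointed; for a pointed polyhedron integrality is equivalent to every vertex being integral (each nonempty face contains a vertex), so it suffices to treat vertices. I would then write the system in combined form $\bigl(\begin{smallmatrix} A \\ -I \end{smallmatrix}\bigr)x \le \bigl(\begin{smallmatrix} b \\ 0 \end{smallmatrix}\bigr)$. A short preliminary step checks that appending the rows $-I$ to a totally unimodular matrix preserves total unimodularity: any square submatrix using a row of $-I$ can be expanded along that row, contributing a factor $\pm 1$ and leaving a smaller square submatrix of $A$, so induction gives a determinant in $\{0,\pm 1\}$. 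Now any vertex $x^*$ is the unique solution of a subsystem $Mx=d$, where $M$ is a nonsingular $n\times n$ submatrix of $\bigl(\begin{smallmatrix} A \\ -I \end{smallmatrix}\bigr)$ and $d$ the corresponding integral subvector of $\bigl(\begin{smallmatrix} b \\ 0 \end{smallmatrix}\bigr)$. Since $M$ is nonsingular and totally unimodular, $\det M=\pm 1$, and Cramer's rule gives $x^*_j=\det(M_j)/\det M=\pm\det(M_j)$, an integer; hence $x^*$ is integral.

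For the reverse implication I would show that the inverse of every nonsingular square submatrix $B$ of $A$ is integral. This is enough: then $\det B\cdot\det B^{-1}=1$ with both factors integers forces $\det B=\pm 1$, while singular square submatrices have determinant $0$. After permuting rows and columns (which preserves both hypotheses, since $x\ge 0$ is symmetric under coordinate permutation), assume $B$ is the leading $k\times k$ block of $A$. Fix $j$ and put $z=B^{-1}e_j$; the goal is to prove $z$ integral. The key idea is to exhibit a translate of $z$ as a vertex of one of the polyhedra in the hypothesis. Choose an integral vector $u$ with $z+u\ge 0$, and let $x^*\in\mathbb{R}^n$ equal $z+u$ on the first $k$ coordinates and $0$ elsewhere. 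Because the trailing coordinates vanish, the first $k$ entries of $Ax^*$ equal $B(z+u)=e_j+Bu$, which is integral, so I may define an integral $b$ by $b_\ell=(e_j+Bu)_\ell$ for $\ell\le k$ and $b_\ell$ any integer $\ge (Ax^*)_\ell$ otherwise.

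With this $b$, the point $x^*$ lies in $\{x:x\ge 0,\ Ax\le b\}$ and has exactly $n$ tight constraints: the $n-k$ nonnegativity constraints on the trailing coordinates together with the first $k$ rows of $Ax\le b$, whose restriction to the leading coordinates is the nonsingular matrix $B$; these $n$ constraint gradients are linearly independent, so $x^*$ is a vertex. By hypothesis $x^*$ is integral, and since its first $k$ coordinates are $z+u$ with $u$ integral, $z$ is integral, completing the proof. I expect the main obstacle to be this reverse direction---specifically the bookkeeping that certifies $x^*$ is genuinely a vertex (linear independence of the selected tight constraints) while confirming $b$ is integral and the unselected rows stay slack; once that construction is arranged, integrality of $z$ is immediate.
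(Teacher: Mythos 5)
Your proof is correct, but there is nothing in the paper to compare it against: Theorem \ref{HoffmanKruskal} is quoted there as a classical cited result of Hoffman and Kruskal, used as a black box (e.g., in Proposition \ref{Rintegral}), with Schrijver's monograph referenced for background; no proof is given. Your argument is the standard one from the integer-programming literature, and both halves check out. The forward direction is the usual Cramer's-rule computation: appending the rows $-I$ preserves total unimodularity (cofactor expansion along a unit row plus induction), any vertex solves a nonsingular integral subsystem $Mx=d$ of tight constraints with $\det M=\pm 1$, hence is integral, and vertex-integrality suffices because $\{x : x\geq 0,\ Ax\leq b\}$ lies in the nonnegative orthant, so it is pointed and every nonempty face contains a vertex---which matches the face-based definition of integrality the paper adopts in its preliminaries. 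The converse is exactly the Veinott--Dantzig argument: to show each nonsingular square submatrix $B$ has an integral inverse, you translate the column $z=B^{-1}e_j$ into the nonnegative orthant, pad it with zeros, and pick an integral right-hand side that is tight on the first $k$ rows and feasible elsewhere; the gradients of those $k$ rows together with the $n-k$ trailing nonnegativity constraints form a block-triangular matrix with blocks $B$ and $-I_{n-k}$, hence nonsingular, so $x^*$ is a vertex, integrality of the polyhedron forces $z$ integral, and $\det B\cdot\det B^{-1}=1$ with both factors integers gives $\det B=\pm 1$. The step you flagged as the main obstacle---certifying vertexhood---is handled correctly; note that you do not even need the unselected rows to stay slack, since extra tight constraints cannot destroy vertexhood, and that permuting rows and columns indeed preserves both hypotheses as you claim.
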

For further definitions and results in optimization and integer programming see Schrijver \cite{Schrijver}.
 
 Besides linear algebra techniques we use graph theory models, and fundamental  graph theory results in algorithmic consideration. 
{A graph is {\em perfect} if for all induced subgraphs $H$ the chromatic number of $H$ equals the size of a maximum clique in $H$.} Several hard combinatorial problems can be stated in terms of graphs, and they have polynomial solution when restricted to the family of perfect graphs.  

 In the study of perfect graphs an induced cycle is called a hole, and its complement is called an antihole. 
 A particular family,  called Berge graphs, are those graphs that contain no odd hole and no odd antihole. The strong perfect graph theorem due to Chudnovsky et al.  \cite{Chudnovsky} says that the family of perfect graphs is identical with the family of Berge graphs.

For further definitions and results in graph theory see West \cite{DougBook}

\section{Minimal representations of an interval order}
\label{Canonicalsection}

Interval orders have appeared in different contexts and various guises  
such as representations due to Bogart \cite{Bogart1993},  Greenough \cite{Greenough},  Doignon \cite{Doignon1988}, the  polyhedral  representation due to Doignon and Pauwels \cite{DoignonPauwels}, or the ascent sequences by Bousquet-M\'elou et al. \cite{ascent2010}.  One particular interval representation that we call  the canonical representation  
reappears as the optimum for a few rather distinct objectives.  In this section we present a few different occurrences and the basic properties of these canonical representations, which we later relate to the length polyhedron  
of an interval order.  

The best known representation of an interval order $P=(X,\prec)$ is due to Greenough \cite{Greenough},  that we call  {\it magnitude representation},  minimizes the number of distinct endpoints.  
 Greenough proved  \cite[Theorem 2.4]{Greenough}  that the {magnitude  representation} of $P$ is essentially unique, i.e., when we disregard the physical locations, the endpoint incidences agree in any minimal endpoint representation.

Doignon \cite{Doignon1988,DoignonPauwels}  introduced the {\it ubiquitously  minimal representation} 
that simultaneously minimizes all  endpoints in a natural representation of an interval order. 
He proved by applying the existence of minimal potentials in cycle free directed graphs that 
 there is a natural representation of $P$, $\{[\ell_x^*,r_x^*] : x\in X\}$, such that
 $\ell_x^*\leq \ell_x$ and $r_x^*\leq r_x$,  for every natural representation $\{[\ell_x,r_x] : x\in X\}$. In particular, an ubiquitously  minimal representation is a magnitude representation  
 as described by Greenough \cite{Greenough}. 
 In Proposition \ref{minCanon}
we show  a direct argument that the ubiquitously minimal representation is identical with Greenough's canonical representation. 

The {\it canonical representation} of an interval order unifies all properties, concepts and structures in our investigations. 
In particular, the canonical representation minimizes the maximum of the right endpoints in a natural representation of a given interval order. 
  Greenough's result  \cite[Theorem 2.4]{Greenough} on the  minimality and uniqueness of the magnitude representation  yields 
  a combinatorial characterization of a canonical  representation as follows.
  
  \begin{lemma} [Greenough \cite{Greenough}]
 \label{canon}
 A family $\mathcal{R}$ of intervals with  interval endpoints in the set  $\{0,1,\ldots,m-1\}$ is the canonical 
 representation of some interval order of magnitude $m$ if and only if every $i$, $0\leq i<m$,  is a  left endpoint of some interval and also the  right endpoint of some interval. 
 \end{lemma}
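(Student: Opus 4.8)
The plan is to route both implications through the monotone ``left-of'' function $L(t)=\{x\in X:r_x<t\}$ attached to a family $\mathcal{R}$ of integer intervals and the interval order $P$ it represents (with $x\prec y\iff r_x<\ell_y$). Two elementary facts would be recorded first: each down-set factors as $D(z)=\{x:r_x<\ell_z\}=L(\ell_z)$, and $L(t+1)\setminus L(t)=\{x:r_x=t\}$, so the nested chain satisfies $L(t)\subsetneq L(t+1)$ \emph{exactly} when $t$ occurs as a right endpoint of $\mathcal{R}$. Since the down-sets of an interval order are linearly ordered by inclusion and, by definition, their number is the magnitude, both directions then reduce to counting how many distinct sets the chain $\{L(\ell_z):z\in X\}$ realizes.

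For the direction $(\Leftarrow)$, I would assume every $i\in\{0,\dots,m-1\}$ occurs both as a left endpoint and as a right endpoint. Because every $i$ is a left endpoint, the set of left-endpoint values is all of $\{0,\dots,m-1\}$, so the distinct down-sets are exactly $\{L(0),\dots,L(m-1)\}$; because each of $0,\dots,m-2$ is a right endpoint, the chain $L(0)\subsetneq L(1)\subsetneq\cdots\subsetneq L(m-1)$ is strictly increasing, producing precisely $m$ distinct down-sets. Hence $P$ has magnitude $m$, and $\mathcal{R}$ is a natural representation of $P$ whose distinct endpoints are exactly $\{0,\dots,m-1\}$, which is the definition of the canonical representation; Greenough's essential-uniqueness result then identifies $\mathcal{R}$ as the canonical representation of $P$.

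For the direction $(\Rightarrow)$, I would take $\mathcal{R}$ to be the canonical representation of an interval order $P$ of magnitude $m$, so its endpoints lie in $\{0,\dots,m-1\}$ and $P$ has exactly $m$ distinct down-sets. Writing $S\subseteq\{0,\dots,m-1\}$ for the set of values occurring as left endpoints, the distinct down-sets are $\{L(s):s\in S\}$, and monotonicity of $L$ yields the squeeze $m=\lvert\{L(s):s\in S\}\rvert\le \lvert S\rvert\le m$. Equality forces $S=\{0,\dots,m-1\}$, so every value is a left endpoint, and forces $L$ to be injective on $\{0,\dots,m-1\}$, so the chain is strict and each of $0,\dots,m-2$ is a right endpoint; finally the interval with left endpoint $m-1$ has its right endpoint squeezed to $m-1$, making $m-1$ a right endpoint as well. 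Thus every $i$ is simultaneously a left and a right endpoint.

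The step I expect to be the main obstacle is the double inequality $m=\lvert\{L(s):s\in S\}\rvert\le\lvert S\rvert\le m$ in the forward direction: all of its content comes from the two bounds being tight at once, so I would argue carefully that equality in $\lvert\{L(s):s\in S\}\rvert\le\lvert S\rvert$ forces $L$ to be injective on $S$ (hence the strict nesting and the presence of the interior right endpoints), while equality in $\lvert S\rvert\le m$ forces $S=\{0,\dots,m-1\}$. A secondary, routine point to check is that the relation $x\prec y\iff r_x<\ell_y$ on integer endpoints automatically satisfies the gap condition $r_x+1\le\ell_y$, so that $\mathcal{R}$ is indeed a natural representation in the required sense.
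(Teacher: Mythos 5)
Your proof is correct. There is, however, nothing in the paper to compare it against line by line: Lemma \ref{canon} is stated there with attribution to Greenough and presented as a consequence of his Theorem 2.4 (minimality and essential uniqueness of the magnitude representation), with no proof supplied. Your argument is therefore a genuinely different, self-contained route. Its engine is the monotone function $L(t)=\{x\in X: r_x<t\}$ together with the two identities $D(z)=L(\ell_z)$ and $L(t+1)\setminus L(t)=\{x: r_x=t\}$, which convert both directions into counting the distinct sets of a nested chain. The forward direction rests on the squeeze $m=\lvert\{L(s):s\in S\}\rvert\le\lvert S\rvert\le m$: equality on the right forces every value of $\{0,\ldots,m-1\}$ to be a left endpoint, equality on the left forces $L$ to be injective on $S$, hence each of $0,\ldots,m-2$ is a right endpoint, and the interval with left endpoint $m-1$ is squeezed to supply the last right endpoint. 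The reverse direction runs the same identities the other way to exhibit the strict chain $L(0)\subsetneq\cdots\subsetneq L(m-1)$, so the represented order has exactly $m$ distinct down-sets, i.e.\ magnitude $m$, and $\mathcal{R}$ meets the paper's definition of a canonical representation. What your route buys is economy of hypotheses: the only external input is the definition of magnitude as the number of distinct down-sets; Greenough's uniqueness theorem is needed only to license the definite article in ``the canonical representation,'' which is exactly where you invoke it, and your argument even works under the weaker reading that the endpoints merely lie in (rather than exhaust) $\{0,\ldots,m-1\}$. The two side points you flagged are indeed routine and correctly handled: integrality makes $r_x<\ell_y$ equivalent to $r_x+1\le\ell_y$, and equality in $\lvert\{L(s):s\in S\}\rvert\le\lvert S\rvert$ is precisely injectivity of $L$ on $S$.
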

 If the conditions in Lemma \ref{canon} are not satisfied by a natural representation $\mathcal{R}$ of $P$, then $\mathcal{R}$  is not minimal in terms of the number of interval endpoints. In this case the number of distinct endpoints can be reduced by collapsing, which is explained next.  
 
 An integral interval $[i,j]$  is called a {\it gap} in a natural representation if it contains no interval endpoint in its interior. 
{\it Collapsing a gap} is the operation of adjusting a representation as expressed in the next lemma. 

   \begin{lemma} 
 \label{collapse}
 Let $\mathcal{R}=\{[\ell_x,r_x] : x\in X\}$  be a natural representation of the interval order $P=(X,\prec)$, and let $i<j$ be consecutive  interval endpoints in $\mathcal{R}$.
If 
   $i$ is not a right endpoint,
  then  for every $x\in X$   replace  $q\in \{\ell_x,r_x\}$, $q\geq j$,   with new interval endpoint $q^\prime=q-(j-i)$.   
  If    $j$ is not a left endpoint,
  then for every $x\in X$
    replace $q\in \{\ell_x,r_x\}$, $q\leq i$,   with new interval endpoint $q^\prime=q+(j-i)$.       
  In each case the  family $\mathcal{R}^\prime$ of the modified intervals represent $P$ with fewer endpoints; furthermore, interval lengths do not increase.
 \end{lemma}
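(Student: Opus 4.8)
The plan is to treat the collapsing operation as the application of a single nondecreasing map $\phi\colon\mathbb{Z}\to\mathbb{Z}$ to every endpoint, and then to read off the three assertions—that $\mathcal{R}'$ represents $P$, that it uses fewer endpoints, and that no length grows—from elementary properties of $\phi$. In the first case I set $\phi(q)=q$ for $q\le i$ and $\phi(q)=q-(j-i)$ for $q\ge j$; in the second case I set $\phi(q)=q+(j-i)$ for $q\le i$ and $\phi(q)=q$ for $q\ge j$. Because $i$ and $j$ are consecutive endpoints, no endpoint lies strictly between them, so every endpoint of $\mathcal{R}$ is either $\le i$ or $\ge j$ and $\phi$ is defined on the whole endpoint set. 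In both cases $\phi$ is weakly increasing, translates one side of the gap rigidly while fixing the other, and sends the two positions $i,j$ to a common value. The modified family is $\mathcal{R}'=\{[\phi(\ell_x),\phi(r_x)] : x\in X\}$; since $\phi$ is nondecreasing we have $\phi(\ell_x)\le\phi(r_x)$, and since $\phi$ keeps values non-negative (the shifted side in the first case lands at heights $\ge i\ge 0$, and the second case only shifts upward), $\mathcal{R}'$ is again a natural representation.

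Next I would verify that $\mathcal{R}'$ represents the same order, i.e.\ that $\phi(r_x)<\phi(\ell_y)\iff r_x<\ell_y$ for all $x,y$, which by the defining property of interval orders is exactly the statement $\mathcal{R}'$ represents $P$. The implication $r_x\ge\ell_y\Rightarrow\phi(r_x)\ge\phi(\ell_y)$ is immediate from monotonicity. For the forward direction, when $r_x<\ell_y$ the only way strictness could be lost is if the pair straddles the gap with $r_x=i$ and $\ell_y=j$, since these are the two positions $\phi$ identifies; in every other sub-case (both endpoints below the gap, both above, or straddling with $r_x<i$ or $\ell_y>j$) the gap between $r_x$ and $\ell_y$ is preserved or only decreased by an amount that cannot close it. This single boundary case is precisely what the hypotheses exclude: in the first case $i$ is not a right endpoint, so $r_x=i$ is impossible, and in the second case $j$ is not a left endpoint, so $\ell_y=j$ is impossible. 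Confirming that this is the sole obstruction, by running through the remaining sub-cases according to the positions of $r_x$ and $\ell_y$ relative to the gap, is the crux of the argument and the step I expect to require the most care.

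Finally I would record the two quantitative claims. For the endpoint count, $\phi$ is injective on the set of used positions except that it merges $j$ into $i$ (first case) or $i$ into $j$ (second case); as both $i$ and $j$ are genuine endpoints of $\mathcal{R}$ and the only collision occurs at the collapsed value, the number of distinct endpoints drops by exactly one, so $\mathcal{R}'$ uses fewer endpoints. For the lengths, an interval whose two endpoints lie on the same side of the gap is merely translated and retains its length, while an interval with $\ell_x\le i$ and $r_x\ge j$ straddles the gap and has its length reduced by $j-i$; the reduced length stays non-negative because $r_x-\ell_x\ge j-i$. No other configuration is possible, so interval lengths do not increase. Assembling these observations yields the lemma in both cases.
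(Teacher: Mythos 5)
Your proof is correct and takes essentially the same approach as the paper's: both contract the gap $[i,j]$ to a single point and rest on the key observation that the only way an order relation $r_x<\ell_y$ could be destroyed is the collision $r_x=i$, $\ell_y=j$, which the hypothesis ($i$ not a right endpoint, resp.\ $j$ not a left endpoint) rules out, with lengths shrinking exactly for the intervals that span the gap. Your explicit monotone map $\phi$ is just a formalization of the paper's figure-based argument, making rigorous what the paper states as ``intersecting intervals remain intersecting; disjoint intervals remain disjoint.''
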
 
 \begin{proof} 
    Fig.\ref{ExtendingCollapsing}  
 	shows how $\mathcal{R}$ is collapsed to a new interval representation $\mathcal{R}^\prime$ of $P$, 
 	essentially by contracting the interval $[i,j]$ to $[i]$.  Intersecting intervals remain intersecting; furthermore, disjoint intervals remain disjoint,
 since a new left endpoint $q^\prime=i$ does not coincide with any old right endpoint on line $i$. 
 Intervals of type $B, C, D, E$ contain the gap $[i,j]$, hence their length decreases by $j-i$; 
 the length of intervals of type A, F, G, H does not change.
 
  Similar argument applies for collapsing when     $j$ is not a left endpoint of any interval.
 \end{proof}
 
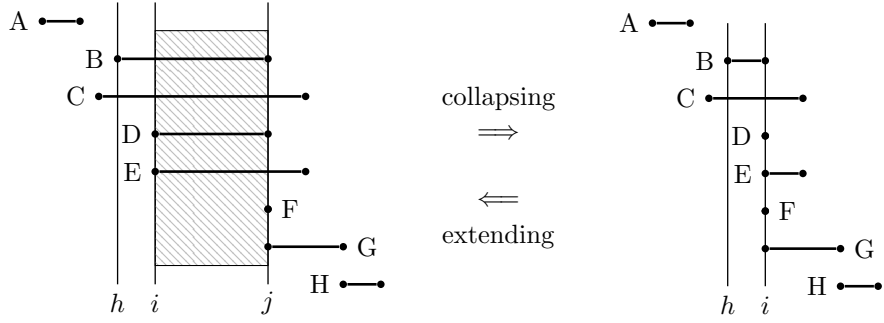
\begin{figure}[htp]
\vskip.05em
	\def\HeightOfBars{7}  
	\begin{center}
		\begin{tikzpicture}[scale=.5]
   \draw[line width=.05em](3,0)--(3,.5+\HeightOfBars);\node()at(3,-0.5) {$i$};
   \draw[pattern=north west lines, pattern color=gray!50] (3,0.5) rectangle (6,-.25+\HeightOfBars);
   \draw[line width=.05em](6,0)--(6,.5+\HeightOfBars);\node()at(6,-0.5) {$j$};
\draw[line width=.05em](2,0)--(2,.5+\HeightOfBars);\node()at(2,-0.5) {$h$};
	\node[A,label=left:A](LA) at (0,7) {}; \node[A](RA) at (1,7) {};
	\draw[line width=.1em](LA)--(RA);
	
	\node[A,label=left:B](LB) at (2,6) {}; \node[A](RB) at (6,6) {};
	\draw[line width=.1em](LB)--(RB);
	
   \node[A,label=left:C](LC) at (1.5,5) {}; \node[A](RC) at (7,5) {};
   \draw[line width=.1em](LC)--(RC);
   
   \node[A,label=left:D](LD) at (3,4) {}; \node[A](RD) at (6,4) {};
   \draw[line width=.1em](LD)--(RD);
   
   \node[A,label=left:E](LE) at (3,3) {}; \node[A](RE) at (7,3) {};
   \draw[line width=.1em](LE)--(RE);
   
   \node[A,label=right:F](LF) at (6,2) {}; \node[A](RF) at (6,2) {};
   \draw[line width=.1em](LF)--(RF);
   
      \node[A](LG) at (6,1) {}; \node[A,label=right:G](RG) at (8,1) {};
   \draw[line width=.1em](LG)--(RG);
   
      \node[A,label=left:H](LH) at (8,0) {}; \node[A](RH) at (9,0) {};
   \draw[line width=.1em](LH)--(RH);
	
\end{tikzpicture}
		\hskip 0.5cm
		\begin{tikzpicture}[scale=.3]
			\node[](S) at (0,0) {$\quad$};
			\node[label=above:collapsing](C) at (0,8) {$\Longrightarrow$};
			\node[label=below:extending](E) at (0,5) {$\Longleftarrow$};
		\end{tikzpicture}	
		\hskip0.5cm
\begin{tikzpicture}[scale=.5] 
\draw[line width=.05em](3,0)--(3,\HeightOfBars);\node()at(3,-0.5) {$i$};
\draw[line width=.05em](2,0)--(2,\HeightOfBars);\node()at(2,-0.5) {$h$};

\node[A,label=left:A](LA) at (0,7) {}; \node[A](RA) at (1,7) {};
\draw[line width=.1em](LA)--(RA);

\node[A,label=left:B](LB) at (2,6) {}; \node[A](RB) at (3,6) {};
\draw[line width=.1em](LB)--(RB);

\node[A,label=left:C](LC) at (1.5,5) {}; \node[A](RC) at (4,5) {};
\draw[line width=.1em](LC)--(RC);

\node[A,label=left:D](LD) at (3,4) {}; \node[A](RD) at (3,4) {};
\draw[line width=.1em](LD)--(RD);

\node[A,label=left:E](LE) at (3,3) {}; \node[A](RE) at (4,3) {};
\draw[line width=.1em](LE)--(RE);

   \node[A,label=right:F](LF) at (3,2) {}; \node[A](RF) at (3,2) {};
\draw[line width=.1em](LF)--(RF);

\node[A](LG) at (3,1) {}; \node[A,label=right:G](RG) at (5,1) {};
\draw[line width=.1em](LG)--(RG);

\node[A,label=left:H](LH) at (5,0) {}; \node[A](RH) at (6,0) {};
\draw[line width=.1em](LH)--(RH);			
		\end{tikzpicture}
	\end{center}	
	\caption{Collapsing an interval $[i,j]$} 
	 \label{ExtendingCollapsing}
\end{figure}	

\begin{proposition} 
\label{compressing} Let  $\mathcal{C}$ be the canonical representation of $P$, and let $\mathcal{R}$ be an arbitrary natural representation of the same interval order.
Then $\mathcal{R}$ can be transformed into $\mathcal{C}$ by a sequence of collapsing operations. 
\end{proposition}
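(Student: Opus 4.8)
The plan is to treat the collapsing operations of Lemma~\ref{collapse} as single steps in a terminating rewriting process, and to show that the \emph{only} representation from which no step is possible is the canonical one. Concretely, I would attach to every natural representation $\mathcal{R}$ the nonnegative integer potential $\Phi(\mathcal{R})=\sum_{x\in X}(\ell_x+r_x)$ and observe that every collapsing operation strictly decreases $\Phi$: each such operation shifts all endpoints lying on one side of a gap toward the gap, and since a boundary of the gap is itself an endpoint, at least one summand of $\Phi$ strictly drops while none increases. As $\Phi$ is a nonnegative integer, any sequence of collapses terminates after finitely many steps, and Lemma~\ref{collapse} guarantees that interval lengths never increase along the way, so the process stays within natural representations of $P$.

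The heart of the argument is the claim that \emph{if no collapsing operation applies to $\mathcal{R}$, then $\mathcal{R}=\mathcal{C}$}. I would prove this by examining the sorted list $e_0<e_1<\dots<e_{k-1}$ of distinct endpoints of $\mathcal{R}$. For each consecutive pair $e_t,e_{t+1}$ the integral interval between them is a gap, and Lemma~\ref{collapse} offers a legal collapse unless $e_t$ is a right endpoint and $e_{t+1}$ is a left endpoint; likewise, if the least endpoint exceeded $0$, or if two consecutive endpoints differed by more than $1$, then the relevant unit gap would have a boundary that is not an endpoint at all (hence trivially neither a right nor a left endpoint), and a collapse would again be available. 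Thus in the terminal state the endpoints are exactly $0,1,\dots,k-1$, and---using that the least endpoint is always a left endpoint and the greatest always a right endpoint---every value $i$ with $0\le i<k$ is simultaneously a left endpoint and a right endpoint.

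At this point Lemma~\ref{canon} applies verbatim: a family whose endpoints lie in $\{0,1,\dots,k-1\}$ and in which every position is both a left and a right endpoint is the canonical representation of an interval order of magnitude $k$. Since this family represents $P$, whose magnitude is $m$, we conclude $k=m$ and $\mathcal{R}=\mathcal{C}$. Combining this with the potential argument, the collapsing process started from an arbitrary $\mathcal{R}$ must halt, and it can only halt at $\mathcal{C}$; hence $\mathcal{R}$ is carried into $\mathcal{C}$ by the intervening collapses.

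I expect the main obstacle to be the terminal-state analysis of the second paragraph---specifically, making precise that a collapse is genuinely available in every non-canonical situation. The delicate points are the gaps with empty interior (consecutive endpoints more than one apart) and a least endpoint larger than $0$: here one applies the collapsing operation to the unit gap adjacent to such a position, whose lower (respectively upper) boundary is not an endpoint and is therefore trivially not a right (respectively left) endpoint, so that the hypothesis of Lemma~\ref{collapse} is met; one must then check that the resulting shift respects the $+1$ separation required of a representation of $P$. Once these cases are handled uniformly, the identification of the terminal state with the hypotheses of Lemma~\ref{canon} is immediate.
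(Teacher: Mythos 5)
Your overall strategy---iterate the collapsing operation of Lemma~\ref{collapse} and show that the only representations admitting no collapse are exactly those meeting the criterion of Lemma~\ref{canon}---is the same as the paper's proof, and your terminal-state analysis in the second and third paragraphs is correct; it is in fact more careful than the paper's one-line argument, since it explicitly handles gaps whose boundaries are not interval endpoints (which is genuinely needed: e.g.\ $\{[0,0],[2,2]\}$ admits no collapse between \emph{consecutive interval endpoints}, yet is not canonical). The genuine gap is your termination argument. The potential $\Phi(\mathcal{R})=\sum_{x\in X}(\ell_x+r_x)$ does \emph{not} decrease under every collapsing operation: Lemma~\ref{collapse} has two cases, and in the second one (when $j$ is not a left endpoint) every endpoint $q\le i$ is replaced by $q+(j-i)$, i.e.\ the endpoints \emph{below} the gap are shifted \emph{up}, so $\Phi$ strictly increases. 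Your claim that ``none increases'' is false, and this case cannot be avoided. Concretely, let $P$ be the two-element antichain and $\mathcal{R}=\{[0,1],[0,0]\}$: here $0$ is a right endpoint and $1$ is not a left endpoint, so the \emph{only} available collapse is the upward one, giving $\{[1,1],[1,1]\}$ and raising $\Phi$ from $1$ to $4$; only a subsequent downward shift reaches the canonical $\{[0,0],[0,0]\}$, so $\Phi$ evolves $1\to 4\to 0$. Worse, your two halves contradict each other: the terminal-state analysis explicitly uses the collapse available when ``$e_{t+1}$ is not a left endpoint,'' which is precisely the upward move your potential argument assumes never happens.

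The repair is not cosmetic, because under the broad notion of gap that your terminal-state analysis requires, it is not true that \emph{every} sequence of collapses terminates: a gap lying above all endpoints can be ``collapsed'' upward over and over, shifting the whole representation up indefinitely. So termination must be proved for a suitably \emph{chosen} sequence. One correct bookkeeping: whenever both boundaries of a gap are endpoints, perform that (merging) collapse---each such step strictly decreases the number of distinct endpoints, which is exactly the ``fewer endpoints'' conclusion of Lemma~\ref{collapse}; when no merging collapse exists, perform only \emph{downward} normalization shifts (lower boundary $i$ not an endpoint), which preserve the endpoint count and strictly decrease $\sum_{x}(\ell_x+r_x)$. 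The pair (number of distinct endpoints, $\Phi$) then decreases lexicographically, so this chosen process halts, and your terminal-state analysis applies verbatim to its halting states, identifying them with $\mathcal{C}$ via Lemma~\ref{canon}.
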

\begin{proof}
 Repeated application of Lemma \ref{collapse} leads from $\mathcal{R}$ to a representation which is the canonical representation of $P$, by Lemma \ref{canon}.
\end{proof}
 Notice that a sequence of the obvious reverse transformations, so called {\it gap extensions},
   take   the canonical representation  of an interval order to any natural representation.
  
The fundamental role of the canonical representation $\mathcal{C}=\{[\ell_x^*,r_x^*] : x\in X\}$ among all integral representations of a fixed interval order $P=(X,\prec)$  is expressed in the next auxiliary lemma.  

  \begin{lemma} 
 \label{right}
 If   $\mathcal{R}=\{[\ell_x,r_x] : x\in X\}$ is a natural representation of $P=(X,\prec)$  and $a,b\in X$ are such that 
 $r_a^*<\ell_b^*$,
 then  $\ell_b^*-r_a^* \leq  \ell_b-r_a$.
  \end{lemma}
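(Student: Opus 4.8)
The plan is to exploit Proposition~\ref{compressing}, which transforms the arbitrary natural representation $\mathcal{R}$ into the canonical representation $\mathcal{C}$ through a finite sequence of collapsing operations, and to track the single quantity $\ell_b - r_a$ along this sequence. Write $\mathcal{R}=\mathcal{R}_0 \to \mathcal{R}_1 \to \cdots \to \mathcal{R}_k = \mathcal{C}$, where each $\mathcal{R}_{t+1}$ arises from $\mathcal{R}_t$ by one application of Lemma~\ref{collapse}, and denote by $\ell_b^{(t)}, r_a^{(t)}$ the relevant endpoints in $\mathcal{R}_t$, so that $\ell_b^{(0)}-r_a^{(0)} = \ell_b - r_a$ and $\ell_b^{(k)}-r_a^{(k)} = \ell_b^* - r_a^*$. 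It then suffices to show that the difference $\ell_b^{(t)} - r_a^{(t)}$ never increases as $t$ grows.

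First I would translate the endpoint hypothesis into an order relation. Since $\mathcal{C}$ represents $P$, the inequality $r_a^* < \ell_b^*$ is equivalent to $a \prec b$. Every intermediate $\mathcal{R}_t$ is again a natural representation of $P$ (this is precisely what Lemma~\ref{collapse} and Proposition~\ref{compressing} guarantee), so $a \prec b$ forces $r_a^{(t)} < \ell_b^{(t)}$ at \emph{every} step. This positivity is the crucial structural fact I will lean on.

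Next I would carry out the one-step case analysis. A single collapse contracts the gap between consecutive endpoints $i<j$; because $r_a^{(t)}$ and $\ell_b^{(t)}$ are themselves interval endpoints, each of them lies in the region $\le i$ or in the region $\ge j$, never strictly between. In the first case of Lemma~\ref{collapse}, where endpoints $\ge j$ shift down by $j-i$, and in the second, where endpoints $\le i$ shift up by $j-i$, I would examine the possible placements of $r_a^{(t)}$ and $\ell_b^{(t)}$. When both lie on the same side of the gap the difference is unchanged; the configuration $r_a^{(t)} \le i < j \le \ell_b^{(t)}$ lowers it by exactly $j-i$. The only configuration that would \emph{raise} the difference is $\ell_b^{(t)} \le i < j \le r_a^{(t)}$, but this entails $\ell_b^{(t)} < r_a^{(t)}$, contradicting $r_a^{(t)} < \ell_b^{(t)}$; hence it cannot occur. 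So each step leaves the difference unchanged or decreases it.

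Telescoping the step inequality $\ell_b^{(t)} - r_a^{(t)} \ge \ell_b^{(t+1)} - r_a^{(t+1)}$ from $t=0$ to $t=k-1$ then yields $\ell_b - r_a \ge \ell_b^* - r_a^*$, as required. The main obstacle is the case analysis in the collapse step, and the key insight there is that the order relation $a \prec b$---forced by the hypothesis $r_a^* < \ell_b^*$---rules out exactly the one configuration in which contracting a gap could enlarge $\ell_b - r_a$.
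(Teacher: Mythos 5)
Your proof is correct, but it takes a genuinely different route from the paper's. The paper proves Lemma~\ref{right} directly from Lemma~\ref{canon}: since every integer in $\{0,\ldots,m-1\}$ is both a left and a right endpoint in $\mathcal{C}$, one can pick witnesses $x_i, y_i$ with $r_{x_i}^* = \ell_{y_i}^* = r_a^* + i$ for $i=0,\ldots,k$ (where $k=\ell_b^*-r_a^*$, $x_0=a$, $y_k=b$); the relations $x_i \| y_i$ and $x_i \prec y_{i+1}$, which hold in $P$ and hence constrain every representation $\mathcal{R}$, chain together into $k$ strict inequalities between $r_a$ and $\ell_b$, and integrality gives $\ell_b - r_a \geq k$. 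You instead run the collapsing sequence of Proposition~\ref{compressing} from $\mathcal{R}$ down to $\mathcal{C}$ and verify the invariant that $\ell_b - r_a$ never increases, the decisive point being that the only difference-increasing configuration $\ell_b^{(t)} \leq i < j \leq r_a^{(t)}$ is excluded because $a \prec b$ forces $r_a^{(t)} < \ell_b^{(t)}$ in every intermediate representation; your case analysis of the two collapse types is complete and correct. Your use of Proposition~\ref{compressing} is legitimate---it precedes Lemma~\ref{right} in the paper and its proof does not depend on it, so there is no circularity. As for what each approach buys: the paper's argument is self-contained (only Lemma~\ref{canon}) and produces an explicit combinatorial certificate---a chain of elements of $P$ alternating incomparabilities and precedences---that forces the gap in every representation, a device that foreshadows the Slack Theorem; your argument is more dynamic and conceptually appealing (slack-type quantities are monotone under collapsing), but it leans on the heavier Proposition~\ref{compressing}, whose one-line proof in the paper glosses over termination and the normalization of the final endpoint set to $\{0,1,\ldots,m-1\}$. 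Note, though, that your telescoping is robust to that last wrinkle: even if the collapse sequence terminated at a shift of $\mathcal{C}$ rather than $\mathcal{C}$ itself, shifts preserve the difference $\ell_b - r_a$, so your conclusion would stand.
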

 \begin{proof} Suppose that $\ell_b^*-r_a^*=k$, for some $k\geq 1$.  By Lemma \ref{canon}, there are intervals 
 $x_i,y_i \in X$, $i=0,1,\ldots,k$, such that $r_{x_i}^*=\ell_{y_i}^*=r_a^*+i$; note that $x_0=a$ and  $y_k=b$.

 Then we have $x_i\| y_{i}$, that is $\ell_{y_i}\leq r_{x_i}$, for $i=0,1\ldots,k$; and $x_i\prec y_{i+1}$, that is  $r_{x_i}<  \ell_{y_{i+1}}$ for $i=0,1\ldots,k-1$. Therefore, 
$$r_a=r_{x_0}<\ell_{y_1}\leq r_{x_1}<\ell_{y_2}\leq r_{x_2} < \ldots < \ell_{y_{k-1}}\leq r_{x_{k-1}}< \ell_{y_k}=\ell_b.$$
Observe that there are $k$ strict inequalities between $r_a$ and $\ell_b$ in the sequence of inequalities above, 
 thus $\ell_b- r_a\geq k= \ell_b^*- r_a^*$ follows.
 \end{proof}
 
 An immediate application of Lemma \ref{right} yields that
the ubiquitously minimal representation due to Doignon \cite{Doignon1988,DoignonPauwels} is identical with the canonical representation.

 \begin{proposition}
 \label{minCanon}
 If $\mathcal{R}=\{[\ell_x,r_x] : x\in X\}$ is a natural representation of $P=(X,\prec)$,
 and $\mathcal{C}=\{[\ell_x^*,r_x^*] : x\in X\}$ is its canonical representation,  
 then  $\ r_x^*\leq r_x \ \text{\rm and\ }\  \ell^*_x\leq \ell_x$, for every $x\in X$.
\end{proposition}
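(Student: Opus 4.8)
The plan is to establish the two inequalities $\ell_x^* \le \ell_x$ and $r_x^* \le r_x$ separately, deriving both from Lemma \ref{right} together with the combinatorial characterization of canonical representations in Lemma \ref{canon}. The guiding observation is that a natural representation is anchored on the left --- all of its endpoints are non-negative --- so the left endpoints can be controlled by a direct application of Lemma \ref{right}, whereas the right endpoints need one extra combinatorial step.

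First I would handle the left endpoints. Using Lemma \ref{canon}, fix an element $a$ with $r_a^* = 0$, which exists because the value $0$ must occur as a right endpoint of the canonical representation. For any $x$ with $\ell_x^* \ge 1$ we then have $r_a^* = 0 < \ell_x^*$, so Lemma \ref{right} applies with this $a$ and with $b = x$, yielding $\ell_x^* = \ell_x^* - r_a^* \le \ell_x - r_a \le \ell_x$, where the last step uses $r_a \ge 0$ since $\mathcal{R}$ is natural. When $\ell_x^* = 0$ the inequality $\ell_x^* \le \ell_x$ is immediate from non-negativity. Hence $\ell_x^* \le \ell_x$ for every $x \in X$.

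Next I would transfer this to the right endpoints. Given $x$, the value $r_x^*$ lies in $\{0,1,\dots,m-1\}$, so by Lemma \ref{canon} there is an element $b$ with $\ell_b^* = r_x^*$. Because this is an equality rather than the strict inequality $r_x^* < \ell_b^*$, we cannot have $x \prec b$; consequently $r_x \ge \ell_b$ holds in the representation $\mathcal{R}$. Combining this with the left-endpoint bound $\ell_b^* \le \ell_b$ already established gives the chain $r_x^* = \ell_b^* \le \ell_b \le r_x$, completing the proof.

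The only genuinely delicate point is the asymmetry just exploited. Lemma \ref{right} bounds a canonical gap between a right endpoint and a left endpoint from above by the corresponding actual gap, and the non-negativity of natural representations supplies an anchor only on the left, at the canonical value $0$. The right-endpoint statement cannot be obtained by the same direct anchoring, since there is no upper bound on the endpoints of an arbitrary natural representation; the step that makes it work is recognizing, via Lemma \ref{canon}, that each canonical right endpoint $r_x^*$ is also realized as a canonical left endpoint $\ell_b^*$ of some element $b$ with $x \not\prec b$, which lets us reuse the left-endpoint inequality. (Alternatively one could combine $\ell_x^* \le \ell_x$ with the fact, following from Lemma \ref{collapse} and Proposition \ref{compressing}, that canonical lengths are pointwise minimal, i.e. $r_x^* - \ell_x^* \le r_x - \ell_x$; but the argument above keeps everything inside Lemma \ref{right}.)
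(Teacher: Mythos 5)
Your proof is correct and takes essentially the same route as the paper's: both anchor at an element $a$ with $r_a^*=0$ supplied by Lemma \ref{canon}, apply Lemma \ref{right} to that anchor, and handle right endpoints via an element $b$ with $\ell_b^*=r_x^*$ together with $x\not\prec b$ (hence $\ell_b\leq r_x$). The only cosmetic difference is ordering --- you prove $\ell^*\leq\ell$ first and reuse it for the right endpoints, while the paper chains the inequalities through Lemma \ref{right} directly --- but unwound, the two arguments coincide.
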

\begin{proof}  The  claim is obvious   for $r^*_x=0$. Assume $ r^*_x>0$. By Lemma \ref{canon}, there are $a,b\in X$ such that $r_a^*=0,$ and $\ell_b^*=r^*_x>0$. Since  $b\| x$ in $P$, $\ell_b\leq r_x$. Then by  applying Lemma \ref{right}, we obtain
$
r^*_x=\ell_b^*= \ell_b^*- r^*_a\leq \ell_b- r_a\leq r_x-r_a\leq r_x.$ To see the ubiquitous minimality for the left endpoints we apply 
 Lemma \ref{right}  with the condition $0=r_a^*<\ell_x^*$ that leads to
$\ell_x^*=\ell_x^*-r_a^*\leq \ell_x-r_a\leq \ell_x.$ 
\end{proof}

\subsection{Slack}
\label{slacksection}
Let $\mathcal{R}=\{[\ell_x,r_x] : x\in X\}$  be an integral representation of $P=(X,\prec)$.
For  $x,y\in X$ the \emph{slack in $\mathcal{R}$}  of the pair $(x,y)$  is defined to be
\[s_{\mathcal{R}}(x,y)= 
\left\{
\begin{array}{lcl}
 \ell_y-r_x-1 &\text{ if}  & x\prec y\\
  r_y-\ell_x & \text{ if}  &    x\|y\\
    \rho_x= r_x-\ell_x& \text{ if}  &    x=y
\end{array} \quad .
\right.
\]
Note that $s_{\mathcal{R}}(x,y)\geq 0$, and it is defined for every pair $(x,y)$, unless $y\prec x$. 

The  next theorem highlights the special role  the canonical representation holds among all  
  integral representations.

\begin{theorem} [Slack Theorem]
\label{slackthm}
Let $\mathcal{C}$ be the canonical representation and let $\mathcal{R}$ be  an arbitrary integral representation of the interval order $P=(X,\prec)$. If
$x,y\in X$ are such that
$y\not\prec x$, then  $s_{\mathcal{C}}(x,y)\leq  s_{\mathcal{R}}(x,y)$.
\end{theorem}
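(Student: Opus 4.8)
The plan is to prove the Slack Theorem by reducing the three cases of the slack definition to the single key inequality already established in Lemma~\ref{right}, and then handle the remaining cases by monotonicity of endpoints (Proposition~\ref{minCanon}). The slack $s_{\mathcal{C}}(x,y)\leq s_{\mathcal{R}}(x,y)$ must be verified separately according to the three clauses in the definition, namely $x\prec y$, $x\|y$, and $x=y$; since $y\not\prec x$ is assumed, exactly one of these holds.

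First I would treat the case $x=y$. Here the inequality reads $\rho^*_x = r^*_x-\ell^*_x \leq r_x-\ell_x = \rho_x$, which is exactly the statement that canonical interval lengths are minimal. This should follow directly from Lemma~\ref{right} by setting $a=b=x$; more precisely, since $\ell^*_x \leq r^*_x$ one can write $\rho^*_x$ as a difference of a left endpoint and a right endpoint of comparable canonical value and apply the lemma, or else invoke the length-nonincreasing conclusion of the collapsing Lemma~\ref{collapse} together with Proposition~\ref{compressing}.

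Next I would treat the case $x\prec y$, where the claim is $\ell^*_y - r^*_x - 1 \leq \ell_y - r_x - 1$, i.e. $\ell^*_y - r^*_x \leq \ell_y - r_x$. Since $x\prec y$ forces $r^*_x < \ell^*_y$ in the canonical representation, this is precisely the hypothesis and conclusion of Lemma~\ref{right} applied to the pair $(a,b)=(x,y)$, giving $\ell^*_y - r^*_x \leq \ell_y - r_x$ immediately. The case $x\|y$ is the one I expect to require slightly more care: here the target is $r^*_y - \ell^*_x \leq r_y - \ell_x$, but Lemma~\ref{right} is phrased for a strict canonical separation $r^*_a < \ell^*_b$, which need not hold between $x$ and $y$ when they are incomparable. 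The plan is to bound $r^*_y$ and $\ell^*_x$ individually using Proposition~\ref{minCanon}: that proposition gives $r^*_y\leq r_y$ and $\ell^*_x\leq \ell_x$. However, these two bounds point in opposite directions for the difference $r^*_y-\ell^*_x$, so a direct subtraction does not close the argument.

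The hard part will be the incomparable case, and the way around the opposing-direction obstacle is to route the comparison through a common reference point rather than subtracting the two Proposition~\ref{minCanon} bounds. I would introduce an element realizing the relevant canonical endpoint—using Lemma~\ref{canon} to find an interval whose canonical right endpoint equals $\ell^*_x$ (so that we may compare $\ell^*_x$ and $\ell_x$ against a fixed right endpoint) and another whose canonical left endpoint equals $r^*_y$—and then apply Lemma~\ref{right} along a chain of comparabilities, exactly as in the proof of Proposition~\ref{minCanon} where $r^*_x = \ell^*_b - r^*_a \leq \ell_b - r_a$ was derived. Concretely, since $x\|y$ means $\ell^*_x \leq r^*_y$, I would pick $a$ with $r^*_a=0$, so that $r^*_y - \ell^*_x = (r^*_y - r^*_a) - (\ell^*_x - r^*_a)$ and bound the two canonical gaps by their counterparts in $\mathcal{R}$ via Lemma~\ref{right}; the non-strict subtraction then combines correctly because both invocations of Lemma~\ref{right} bound canonical quantities from above by the same-signed $\mathcal{R}$-quantities. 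Verifying that the auxiliary elements exist and that the relations $x\|y$, together with the endpoint identities from Lemma~\ref{canon}, assemble into a valid application of Lemma~\ref{right} is the only real content; the arithmetic is routine once the correct reference point is chosen.
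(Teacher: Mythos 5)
Your overall skeleton --- splitting into the three slack cases, disposing of $x \prec y$ by a direct application of Lemma~\ref{right}, and routing the incomparable case through auxiliary elements supplied by Lemma~\ref{canon} --- is exactly the paper's strategy, and your diagnosis that the two Proposition~\ref{minCanon} bounds point in opposite directions is correct. But the concrete mechanism you then commit to for the case $x\|y$ is flawed, and it reintroduces precisely the error you had just diagnosed. You propose to fix a reference element $a$ with $r_a^*=0$, write $r_y^*-\ell_x^* = (r_y^*-r_a^*)-(\ell_x^*-r_a^*)$, and bound \emph{both} parenthesized quantities from above by their $\mathcal{R}$-counterparts via Lemma~\ref{right}. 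Two upper bounds do not subtract: from $r_y^*-r_a^*\leq r_y-r_a$ and $\ell_x^*-r_a^*\leq \ell_x-r_a$ one cannot conclude $(r_y^*-r_a^*)-(\ell_x^*-r_a^*)\leq (r_y-r_a)-(\ell_x-r_a)$; for that you would need the \emph{lower} bound $\ell_x^*-r_a^*\geq \ell_x-r_a$, which is false in general (Proposition~\ref{minCanon} gives the opposite direction). So the claim that ``the non-strict subtraction then combines correctly because both invocations of Lemma~\ref{right} bound canonical quantities from above'' is wrong, and the argument fails at the only place where the incomparable case has real content.

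The repair is to use your auxiliary elements in a \emph{single} application of Lemma~\ref{right} rather than in two subtracted ones; this is what the paper does. Take $x_0$ with $r_{x_0}^*=\ell_x^*$ and $y_0$ with $\ell_{y_0}^*=r_y^*$ (Lemma~\ref{canon}); if $\ell_x^*=r_y^*$ the canonical slack is $0$ and there is nothing to prove, so assume $r_{x_0}^*<\ell_{y_0}^*$ and apply Lemma~\ref{right} once to the pair $(x_0,y_0)$, obtaining $\ell_{y_0}^*-r_{x_0}^*\leq \ell_{y_0}-r_{x_0}$. The transfer back to $x$ and $y$ then uses incomparability, not subtraction: the canonical intervals of $x_0$ and $x$ share the point $\ell_x^*$, and those of $y_0$ and $y$ share $r_y^*$, so these pairs intersect in the canonical representation and hence in \emph{every} representation of $P$; in $\mathcal{R}$ this gives $r_{x_0}\geq \ell_x$ and $\ell_{y_0}\leq r_y$, whence $r_y^*-\ell_x^* = \ell_{y_0}^*-r_{x_0}^*\leq \ell_{y_0}-r_{x_0}\leq r_y-\ell_x$. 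The same computation with $y=x$ also settles the case $x=y$; your alternative route for that case via Lemma~\ref{collapse} and Proposition~\ref{compressing} is sound, but ``setting $a=b=x$'' in Lemma~\ref{right} is not, since that lemma requires $r_a^*<\ell_b^*$, which fails when $a=b$.
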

\begin{proof} 
 Let $\mathcal{C}=\{[\ell_x^*,r_x^*] : x\in X\}$ be the canonical representation of $P$, and let $\mathcal{R}=\{[\ell_x,r_x] : x\in X\}$.
 For $x\prec y$, Lemma \ref{right} implies
$$s_{\mathcal{C}}(x,y)=\ell_y^*-r_x^*-1\leq \ell_y-r_x-1=s_{\mathcal{R}}(x,y).$$

If $x\| y$ or $x=y$, then we apply Lemma \ref{canon} to find intervals $x_0,y_0\in X$ such that
$r_{x_0}^*=\ell_x^*$ and $\ell_{y_0}^*=r_y^*$. Then we have  $ \ell_{y_0} \leq r_y$ and
$ r_{x_0}\geq \ell_x$, thus Lemma \ref{right} implies
$$s_{\mathcal{C}}(x,y)=r_y^*-\ell_x^*= \ell_{y_0}^*-r_{x_0}^*\leq\ell_{y_0}-r_{x_0}\leq  r_y-\ell_x=s_{\mathcal{R}}(x,y),$$
and the theorem follows.
\end{proof}

An immediate corollary of Theorem \ref{slackthm} is that if $s_{\mathcal{R}}(x,y)=0$ is true  in some representation $\mathcal{R}$ of $P$, then $s_{\mathcal{C}}(x,y)=0$ in the canonical representation $\mathcal{C}$ as well.
We call  $(x,y)$ a {\it slack zero pair}, provided $y\not\prec x$ and $s_{\mathcal{C}}(x,y)=0$. 
\begin{figure}[htp]
\begin{center}
\begin{tikzpicture}[scale=.8] 

\node[X,label=right:4](1) at (.5,3) {};
\node[X,label=left:6](3) at (-.5,3) {};
\node[X,label=right:2](4) at (2,3.75) {};

\node[X,label=below:3](x) at (2,1.7) {};
\node[X,label=below:1](y) at (0.5,1.7) {};
\node[X,label=above:5](z) at (1,5.7) {};

\node[X,label=above:7](2) at (2,5) {};

\draw[line width=.1em](z)--(3)--(y)--(1) (4)--(2)--(1) (x)--(4)--(y);
\draw[line width=.1em](1)--(z)--(4);
\end{tikzpicture}
\hskip2cm
\begin{tikzpicture}
\foreach \i in {0,...,4}{
    \draw[line width=.01em](1+\i,0)--(1+\i,2);\node()at(1+\i,2.2) {\tiny\i};
}
\node[A,label=left:4](L) at (2,1) {};\node[A](R) at (3,1) {};
\draw[line width=.1em](L)--(R);
\node[A,label=left:7](L) at (4,1) {};\node[A](R) at (5,1) {};
\draw[line width=.1em](L)--(R);
\node[A,label=left:2](L) at (3,.5) {};\node[A](R) at (3,.5) {};
\draw[line width=.1em](L)--(R);
\node[A,label=left:6](L) at (2,1.5) {};\node[A](R) at (4,1.5) {};
\draw[line width=.1em](L)--(R);
\node[A,label=left:3](L) at (1,.5) {};\node[A](R) at (2,.5) {};
\draw[line width=.1em](L)--(R);
\node[A,label=left:1](L) at (1,1) {};\node[A](R) at (1,1) {};
\draw[line width=.1em](L)--(R);
\node[A,label=left:5](L) at (5,1.5) {};\node[A](R) at (5,1.5) {};
\draw[line width=.1em](L)--(R);
\end{tikzpicture}
\end{center}
\caption{Hasse diagram and  the canonical representation of 
the interval order $P=[0,1,0,1,3,1,3]$ that has magnitude $m=5$}
\label{ex0}
\end{figure}
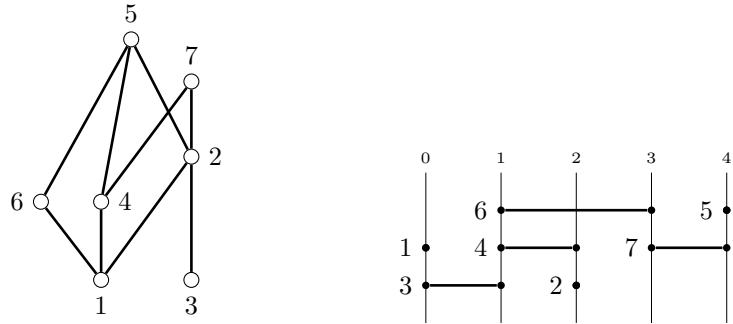

Fig.\ref{ex0} shows the interval order $P=[0,1,0,1,3,1,3]$;  in  this example  $(2,2), (6,5),$ $(6,3), (7,6)$ are slack zero pairs, while,  $(7,7), (2,5), (6,4), (6,7)$ are not slack zero pairs.

\begin{theorem} [Converse Slack Theorem] 
\label{conversethm} Let $\mathcal{C}$  be the canonical representation of the interval order $P=(X,\prec)$, and let ${\mathcal{R}}$ be an integral representation of $P$. 
If  $s_{\mathcal{C}}(x,y)=0$ implies $s_{\mathcal{R}}(x,y)=0$, for every $y\not\prec x$, 
then ${\mathcal{R}}$ is a shift of ${\mathcal{C}}$, that is, the intervals in ${\mathcal{R}}$ are obtained 
 by adding the same integer to every interval endpoint in $\mathcal{C}$. 
\end{theorem}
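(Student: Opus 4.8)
The plan is to treat the hypothesis as a rigidity condition: each slack zero pair forces an exact linear equation in $\mathcal{R}$, and I will show these equations pin down every endpoint of $\mathcal{R}$ up to a single additive constant. The translation is immediate from the three cases of the slack definition. If $s_{\mathcal{C}}(x,y)=0$ then by assumption $s_{\mathcal{R}}(x,y)=0$, which reads $\ell_y=r_x+1$ when $x\prec y$, $r_y=\ell_x$ when $x\|y$, and $r_x=\ell_x$ when $x=y$. So slack zero pairs record exactly which canonical endpoints coincide or sit adjacently, and the hypothesis says these incidences persist in $\mathcal{R}$.

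The core step is to prove a well-definedness claim: for each value $v\in\{0,1,\ldots,m-1\}$ there is a single number $g(v)$ such that every interval with canonical right endpoint $v$ has right endpoint $g(v)$ in $\mathcal{R}$, and every interval with canonical left endpoint $v$ has left endpoint $g(v)$ in $\mathcal{R}$. Fix $v$ and consider the sets $R_v=\{x:r_x^*=v\}$ and $L_v=\{y:\ell_y^*=v\}$, both nonempty by Lemma~\ref{canon}. For $x\in R_v$ and $y\in L_v$ with $x\ne y$, the inequalities $\ell^*\le r^*$ give $r_x^*\not<\ell_y^*$ and $r_y^*\ge v\ge\ell_x^*$, so $x\not\prec y$ and $y\not\prec x$, hence $x\|y$; thus $(y,x)$ is a slack zero pair with $s_{\mathcal{C}}(y,x)=r_x^*-\ell_y^*=0$, and the hypothesis yields $r_x=\ell_y$. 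Moreover any point interval at $v$ (an element of $R_v\cap L_v$) satisfies $\ell=r$ in $\mathcal{R}$. Chasing these equalities over all of $R_v\cup L_v$ collapses every relevant endpoint to one common value $g(v)$.

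Next I relate consecutive values. For $v\le m-2$, choose $x$ with $r_x^*=v$ and $y$ with $\ell_y^*=v+1$ (both exist by Lemma~\ref{canon}); then $x\prec y$ with $\ell_y^*-r_x^*=1$, so $(x,y)$ is a slack zero pair, forcing $\ell_y=r_x+1$, that is $g(v+1)=g(v)+1$. Consequently $g(v)=g(0)+v$ for every $v$. Writing $c=g(0)$, we obtain $r_x=r_x^*+c$ and $\ell_x=\ell_x^*+c$ for all $x\in X$, which is precisely the statement that $\mathcal{R}$ is a shift of $\mathcal{C}$.

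The routine portions are the endpoint inequalities establishing $x\|y$ and the equation bookkeeping. The main obstacle I anticipate is the well-definedness of $g(v)$ in the boundary configurations: the pairwise relation $r_x=\ell_y$ only applies when $x\ne y$, so when $R_v$ or $L_v$ is a singleton or they meet only in a single point interval, the equalities must be routed through that point interval at $v$ (available whenever $R_v\cap L_v\ne\emptyset$), while in the disjoint case $R_v\cap L_v=\emptyset$ one instead pins all values to a common endpoint using that $R_v,L_v$ are nonempty and disjoint. Once $g$ is shown to be well defined, the consecutive-value step and the conclusion follow at once.
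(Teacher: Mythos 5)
Your proof is correct and follows essentially the same route as the paper's: both translate the sharp/incomparable slack-zero pairs (via Lemma \ref{canon}) into the statement that all endpoints at a common canonical position share a single value in $\mathcal{R}$, use the cover slack-zero pairs to force consecutive positions onto consecutive integers, and conclude the shift. Your explicit function $g$ and the treatment of the degenerate point-interval cases merely spell out what the paper states in prose as ``coinciding endpoints correspond to coinciding endpoints'' and ``the endpoints of $\mathcal{R}$ form $m$ consecutive integers.''
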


\begin{proof} 
Assume that two interval endpoints coincide in the canonical representation ${\mathcal{C}}=\{[\ell_x^*,r_x^*] : x\in X\}$. If $\ell^*_x=r^*_y$, 
then $s_{\mathcal{C}}(x,y)=0$; this implies $s_{\mathcal{R}}(x,y)=0$, that is, $\ell_x=r_y$ in ${\mathcal{R}}$. By Lemma \ref{canon}, every  $i$, $0\leq i\leq m-1$, is a left endpoint and a right endpoint  in ${\mathcal{C}}$; therefore, coinciding endpoints in ${\mathcal{C}}$ correspond to coinciding endpoints in ${\mathcal{R}}$. Thus we obtain that $R$ is a minimal endpoint representation.

Let $m$ be the magnitude of $P$. Observe that for every gap $[i-1,i]$, $i=1,\ldots,m-1$, in 
  $\mathcal{C}$ there exist $x,y\in X$ such that
$r^*_x=i-1$ and $\ell^*_y=i$, that is $s_{\mathcal{C}}(x,y)=0$. Then by the assumption, $s_{\mathcal{R}}(x,y)=0$, it follows that  $r_x$ and $\ell_y$
are consecutive integers in ${\mathcal{R}}$, as well. Therefore, the interval endpoints in ${\mathcal{R}}$  form a sequence of $m$ consecutive integers.  

Because  the interval endpoints  in  the minimal endpoint representation ${\mathcal{R}}$ are $m$ consecutive integers,    ${\mathcal{R}}$ is a shift of  ${\mathcal{C}}$.
\end{proof}

\section{The full representation polyhedron}
\label{Fullsection}

\noindent
Given an interval order $P=(X,\prec)$, where $X=\{1,2,\ldots,n\}$, the {\it full representation polyhedron} $R_P\subset\mathbb{R}^{3n}$ is defined by  the real vectors $(\ell_1,r_1,\ldots,\ell_n,r_n,\rho_1,\ldots,\rho_n)$ satisfying for every $x,y\in X$ the inequalities:
\begin{eqnarray}\label{Rfull}
\left\{\qquad
\begin{array}{rcrl}
-\ell_y+r_x&\leq&-1& \text{if \ } x\prec y\\
\ell_x-r_y&\leq&0  & \text{if \ } x\| y, \  x\neq y\\
\ell_x-r_x+\rho_x &\leq& 0  \\
-\ell_x +r_x-\rho_x &\leq& 0 \\
\ell_x-r_x&\leq&0 \\
\end{array}
\right.
\end{eqnarray}

The system (\ref{Rfull}) can be written in form of $M x\leq b$, where the rows of $M$ correspond to the $0,\pm 1$ coefficients on the left-hand sides of the corresponding inequalities, its columns correspond to the $3n$ variables, $\ell_x, r_x$, and $\rho_x$, $x=1,\ldots,n$, and $b$ is the vector  determined by the right-hand side of the inequalities.  

\begin{proposition} 
\label{Rintegral} The full representation polyhedron 
$R_P$ defined by the system (\ref{Rfull})  is an integral polyhedron.
\end{proposition}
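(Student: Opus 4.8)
The plan is to write the system \eqref{Rfull} as $Mx\le b$, to observe that $M$ is a $\{0,\pm1\}$-matrix and that $b$ (whose entries are $-1$ on the rows coming from $x\prec y$ and $0$ elsewhere) is integral, to prove that $M$ is totally unimodular by verifying the Ghouila--Houri criterion (Theorem \ref{Ghouila-Houri}), and then to conclude integrality from the Hoffman--Kruskal theorem (Theorem \ref{HoffmanKruskal}). The two tools stated just before the proposition are tailored precisely for this route, so the only real content is the construction of an equitable bicoloring of every submatrix.

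To set up the bicoloring I would first separate the rows of $M$ into two kinds. The rows coming from $x\prec y$, from $x\|y$, and from $\ell_x-r_x\le 0$ each carry exactly one $+1$ and one $-1$; together they form (a subgraph of) the incidence matrix of a digraph on the node set $\{\ell_x,r_x:x\in X\}$. The remaining rows are the pairs $\ell_x-r_x+\rho_x\le0$ and $-\ell_x+r_x-\rho_x\le0$, carrying the three nonzero entries $(+1,-1,+1)$ and $(-1,+1,-1)$ on $(\ell_x,r_x,\rho_x)$ and being negatives of one another; crucially, the column $\rho_x$ occurs in no other row. Now fix any submatrix $M'$, and color every selected $\ell$- and $r$-column black. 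Each two-nonzero row then contributes a black-minus-white sum equal to the sum of its selected entries, which lies in $\{-1,0,1\}$, so these rows are automatically balanced. For a selected column $\rho_x$ let $c\in\{-1,0,1\}$ be the black contribution of the selected columns among $\ell_x,r_x$ to the row $\ell_x-r_x+\rho_x\le0$; coloring $\rho_x$ white when $c=1$, black when $c=-1$, and either color when $c=0$ makes that row's sum $c\pm1\in\{-1,0,1\}$, and the same choice balances its negation. Because distinct $\rho_x$ live in disjoint row-pairs, these choices are independent and consistent, so $M'$ has an equitable bicoloring. By Theorem \ref{Ghouila-Houri}, $M$ is totally unimodular.

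The main obstacle I anticipate is exactly this last coloring of the $\rho$-columns: the uniform ``all black'' coloring that disposes of the difference rows fails on a row such as $\ell_x-r_x+\rho_x\le0$ whenever only $\ell_x$ and $\rho_x$ are selected (the sum would be $2$), so the $\rho_x$-colors must be chosen adaptively as above. A second, more cosmetic point is the passage from total unimodularity to integrality. Theorem \ref{HoffmanKruskal} is stated for systems $\{x:x\ge0,\ Ax\le b\}$, whereas $R_P$ carries no sign constraints and is in fact not pointed: it is invariant under the common shift $(\ell,r,\rho)\mapsto(\ell+t\mathbf 1,\,r+t\mathbf 1,\,\rho)$. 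I would therefore invoke the standard consequence of total unimodularity for sign-unconstrained systems (cf.\ Schrijver \cite{Schrijver}): each minimal face of $R_P$ is the solution set of a subsystem $M_=x=b_=$ whose matrix, being a submatrix of the totally unimodular $M$, is itself totally unimodular, so with $b_=$ integral it contains an integral point. Hence every nonempty face of $R_P$ contains an integral point, and $R_P$ is an integral polyhedron.
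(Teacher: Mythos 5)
Your proposal is correct, and it follows the same route as the paper: write (\ref{Rfull}) as $Mx\le b$, verify the Ghouila--Houri criterion (Theorem \ref{Ghouila-Houri}) to get total unimodularity of $M$, then pass from total unimodularity and integrality of $b$ to integrality of $R_P$. The comparison is still worth recording, because the two proofs differ exactly at the bicoloring, and there your version is the correct one. The paper colors all $\ell$- and $r$-columns black and then uses a fixed rule for the $\rho$-columns: $\rho_x$ is black if the column of $r_x$ is \emph{not} selected, and white otherwise. That rule fails on precisely the case you single out: take the single row $\ell_x-r_x+\rho_x\le 0$ and the two columns $\ell_x,\rho_x$; since $r_x$ is absent, the paper's rule makes $\rho_x$ black, and the black-minus-white sum is $2$. (The rule becomes correct if the colors in the $\rho_x$-clause are interchanged, i.e., $\rho_x$ black exactly when $r_x$ \emph{is} selected; presumably that is what was intended.) Your adaptive choice --- $\rho_x$ white when the selected columns among $\ell_x,r_x$ contribute $+1$ to that row, black when they contribute $-1$, arbitrary when they contribute $0$ --- covers all cases, and your observations that the row $-\ell_x+r_x-\rho_x\le 0$ is balanced by the same choice (being the negation) and that distinct $\rho_x$ occur in disjoint row pairs make the verification complete.

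Your second point is also well taken. Theorem \ref{HoffmanKruskal} as quoted concerns polyhedra of the form $\{x : x\ge 0,\ Ax\le b\}$, whereas $R_P$ carries no sign constraints and is not pointed (it contains the shift line), so the paper's one-sentence conclusion is slightly loose as a literal citation. Your repair --- each minimal face of $R_P$ is the solution set $\{x : M'x=b'\}$ of a subsystem, and a feasible equality system with totally unimodular matrix and integral right-hand side has an integral solution --- is the standard fix and makes the deduction airtight.
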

\begin{proof} 
We prove first by applying Theorem \ref{Ghouila-Houri} that the coefficient matrix $M$ of the system (\ref{Rfull}) is totally unimodular.
Let $A$ be an arbitrary submatrix of $M$ formed by selecting certain rows and columns.  Color all columns of $A$  black that correspond to a
 left endpoint or a right endpoint variable.  A column of $A$ corresponding to $\rho_x$ is also black if the column corresponding to $r_x$ 
 is not selected in $A$; otherwise, color this column white. 	
This is clearly an equitable bicoloring of the columns of $A$ for every submatrix $A$,
thus Theorem \ref{Ghouila-Houri} implies that $M$ is totally unimodular. 
Because the right hand side vector $b$ of (\ref{Rfull}) is integral, the polyhedron $R_P$ is integral, by Theorem \ref{HoffmanKruskal}.
\end{proof}

A linear inequality ${g}{x}\leq {d}$  is a {\it valid inequality} for $R_P$ if  $R_P\subseteq \{{ x}\in \mathbb{R}^{3n} : { g}{ x}\leq { d}\}$.
 If  ${ g}{ x}\leq { d}$ is a valid inequality for $R_P$, then 
   $F=\{{ x}\in R_P: { g}{ x}= { d}\}$ is called a {\it face} of $R_P$ defined by the inequality ${ g}{ x}\leq { d}$.  
   
 Let  $F$ be a face of $R_P$ defined by the inequality ${ g}{ x}\leq { d}$. If $F\neq\varnothing$ and 
  $F\neq R_P$, then  $H= \{{ x}\in \mathbb{R}^{3n} : { g}{ x}= { d}\}$ is called a {\it supporting hyperplane} to $R_P$; and $H\cap R_P$ is a face of $R_P$, by definition.
  
 A face $F$ is called a {\it facet} of $R_P$, provided $\dim(F)=\dim(R_P)-1$, (where  $\dim(S)=k$ if the maximum number of affinely independent  points in the set $S$ is equal to $k+1$). 
A face $F$ is called a {\it vertex}, provided $\dim(F)=0$.

\begin{theorem} [Canonical Line Theorem]
\label{main}
The intersection of the facets of  the full representation polyhedron $R_P$ is the line 
$\ell={ c} + t\cdot { k}$, where ${ c}$ is the $3n$-vector of  
the canonical representation of $P$, \  and ${ k}=(1,1,\ldots,1,1,0,\ldots,0)\in \mathbb{R}^{3n}$ is the characteristic vector of the first $2n$ location coordinates, and $t\in\mathbb{R}$.
\end{theorem}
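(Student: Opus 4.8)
The plan is to characterize the intersection of all facets of $R_P$ directly as the set of points lying on every supporting hyperplane that defines a facet, and to show this common intersection is exactly the stated line. The strategy has two halves: first show that the line $\ell = c + t\cdot k$ lies in $R_P$ and is contained in every facet (so the facet-intersection contains the line); second show the facet-intersection is no larger than the line (so the two coincide). The key structural fact I would exploit is the Converse Slack Theorem (Theorem~\ref{conversethm}): a representation all of whose slack-zero pairs match those of the canonical representation must be a shift of $\mathcal{C}$, and shifts of $\mathcal{C}$ are precisely the points $c + t\cdot k$ with $t$ integral. I would combine this with the integrality of $R_P$ (Proposition~\ref{Rintegral}) to upgrade the integral statement to a statement about real $t$.

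\textbf{Line is contained in every facet.} First I would verify that $c + t\cdot k \in R_P$ for all real $t$: adding a constant $t$ to all of $\ell_x, r_x$ (and leaving $\rho_x$ fixed) preserves every inequality in (\ref{Rfull}), since each of those inequalities involves only differences of endpoint variables, or differences together with $\rho_x$, and the $\rho$-rows are untouched; the $\rho_x$ values themselves equal $r_x^* - \ell_x^*$ throughout, which are the correct lengths. This shows the line sits inside $R_P$, hence is a $1$-dimensional affine subset of the polyhedron. Next, for each facet $F$ defined by a valid inequality $g x \le d$, I would argue the line lies in the hyperplane $g x = d$. The cleanest route is to observe that $R_P$ is a pointed affine cone whose lineality is exactly the direction $k$ (the endpoint-shift direction), a fact foreshadowed in the introduction; any facet-defining hyperplane must contain the whole lineality space translated to meet $R_P$, so $k$ is parallel to every facet hyperplane. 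Combined with the fact that the canonical point $c$ satisfies each facet inequality with equality at least somewhere along the line, this forces the entire line into $F$.

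\textbf{Intersection is no larger than the line.} This is the main obstacle and the crux of the argument. I would show that any point $p \in R_P$ lying on \emph{every} facet must lie on the line. The plan is to translate facet-membership into slack conditions: each inequality of (\ref{Rfull}) that is tight at $c$ corresponds to a slack-zero pair, and by the Slack Theorem (Theorem~\ref{slackthm}) these are exactly the inequalities that are facet-candidates (the always-tight ones). If $p$ lies on all facets, then $p$ makes tight precisely the inequalities corresponding to slack-zero pairs of $\mathcal{C}$. Restricting $p$ to its endpoint coordinates gives a real representation whose slack-zero pattern matches $\mathcal{C}$; invoking the Converse Slack Theorem (after reducing to the integral case via Proposition~\ref{Rintegral} and a scaling/density argument to handle real $t$) yields that this representation is a shift of $\mathcal{C}$, i.e.\ $p = c + t\cdot k$ for some real $t$. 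The delicate points I expect to wrestle with are: (i) confirming that every slack-zero pair genuinely gives a \emph{facet} and not merely a face of lower dimension, which requires exhibiting enough affinely independent tight points; and (ii) making the passage from the integral Converse Slack Theorem to a statement valid for all real $t$ rigorous, since Theorem~\ref{conversethm} is stated for integral representations. I would address (ii) by noting that the facet-intersection is itself an affine subspace (being an intersection of hyperplanes), so it suffices to identify its integral points, where the Converse Slack Theorem applies directly, and then take the affine hull.
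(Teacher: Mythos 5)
Your skeleton matches the paper's: both inclusions, with the Slack Theorem, the Converse Slack Theorem, and the integrality of $R_P$ as the engines, and your resolution of the real-versus-integral issue (pass to the integral points of the affine subspace $S$ and note that they generate it) is essentially the paper's own move. But the two central steps, one in each inclusion, are asserted rather than derived. In your first half, the claim that ``the canonical point $c$ satisfies each facet inequality with equality at least somewhere along the line'' is precisely the nontrivial content of that inclusion, not a citable fact. The paper proves it as follows: by Proposition \ref{Rintegral} each facet $F$ contains an \emph{integral} point $f$; tightness of the defining inequality at $f$ means the corresponding pair has slack zero in the integral representation $f$; the Slack Theorem (Theorem \ref{slackthm}) then forces that pair to have slack zero in $\mathcal{C}$, so $c$ lies on the defining hyperplane. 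You list these ingredients in your preamble but never deploy them here. (Your lineality observation does finish the step correctly once $c\in F$ is known, since $gk=0$ follows from shift-invariance alone; note, though, that a polyhedron with nontrivial lineality is by definition \emph{not} pointed --- it is $D_P$ and $Q_P$ that are pointed affine cones, not $R_P$.)

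Second, and more seriously, your reverse inclusion hinges on ``if $p$ lies on all facets, then $p$ makes tight precisely the inequalities corresponding to slack-zero pairs of $\mathcal{C}$.'' This follows only if every slack-zero pair defines a \emph{facet} (or at least a face containing the intersection $S$ of all facets) --- your delicate point (i), which you leave open. It does not come for free: for instance the rows $\ell_x-r_x+\rho_x\le 0$ and $-\ell_x+r_x-\rho_x\le 0$ are tight on all of $R_P$, so tightness at $c$ by itself certifies nothing about facetness, and exhibiting the required affinely independent points would essentially amount to redoing the Doignon--Pauwels facet theorem (Theorem \ref{DPfacet}) at the level of $R_P$. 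The paper sidesteps the question entirely with a cheap but crucial observation that your proposal is missing: once part 1 gives $c\in S$, note that $S$ is an affine subspace contained in the halfspace $\{x: gx\le d\}$ of each slack-zero inequality and meeting its boundary hyperplane at $c$; an affine subspace inside a halfspace that touches the boundary must lie entirely \emph{in} the hyperplane (otherwise reflecting a violating point through $c$ within $S$ would exit the halfspace). Hence $S\subseteq\{x : gx=d\}$ for every slack-zero pair, with no dimension count needed, and the Converse Slack Theorem (Theorem \ref{conversethm}) then applies to the integral points of $S$ exactly as you intend. Without this observation, or a genuine proof that slack-zero pairs define facets, your argument has a hole at its crux.
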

\begin{proof}
Let $\mathcal{C}$  be the canonical representation of $P$, and let $S$ be the intersection of the facets of  $R_P$.

1. To see that $\ell\subseteq S$ we show that  $\ell\subset F$, for every facet  $F$ of $R_P$. Suppose that $F$ is
 defined by a valid inequality  ${ g}{ x}\leq { d}$ in (\ref{Rfull}), and   
let  $H= \{{ x}\in \mathbb{R}^{3n} : { g}{ x}= { d}\}$.  
By definition, ${ g}{ x}= { d}$, for every ${ x}\in F$.

By Proposition \ref{Rintegral}, $R_P$ is an integral polyhedron, therefore,  there is an integral point ${ f} \in F$ such that ${ g}{ f}= { d}$. 

Observe that the equality $g{ f}= { d}$ for the integral vector $f\in H$   means that  there is a slack zero pair in the  integral representation corresponding to $f$. By Theorem \ref{slackthm}, the same pair has slack zero in the canonical representation $\mathcal{C}$ as well; then it follows that ${ g}{ c}= { d}$ and implies  ${ c} \in F$. 

Because a slack value is invariant under shifting, $F=H\cap R_P$ contains the shifts of $c$ corresponding to the  integral representations ${ c} + t\cdot { k}$, that is,  ${ c} + t\cdot { k}\in F$, for every  integral $t$.
The integer polyhedron $R_P$ is the convex hull of its integral points, thus $\ell\subset F$ follows. 
 We conclude that $\ell\subseteq S$.
 
 2. Next we verify $S\subseteq \ell$.  Note that $S$ is an affine subspace, being the solution set of a system of linear equations.  
Consider an arbitrary pair $x,y\in X$ such that $y\not\prec x$ and $s_P(x,y)=0$.
Assume that $ { g}{ x}\leq { d}$ is the valid
inequality of (\ref{Rfull}) involving the pair $x,y\in X$, and let   $F= \{{ x}\in R_P : { g}{ x}= { d}\}$
be the corresponding face of $R_P$. 

Observe that $s_P(x,y)=0$ implies ${ c}\in  F$, that is $F\neq\varnothing$. On the other hand, 
$F\neq R_P$, because moving appropriately the interval endpoints in $\mathcal{C}$ a representation of $P$ is obtained easily with positive slack 
value between $x,y$, and as a result, the corresponding vector is in $R_P\setminus F$.
Thus the hyperplane  
 $H= \{{ x}\in \mathbb{R}^{3n} : { g}{ x}= { d}\}$ is a  {supporting hyperplane} to $R_P$, and
 $H\cap R_P=F$.  Since $ { c}\in S\cap H$, we obtain  $S\subset H$.
 
 Because $S$ is an affine subspace, and $R_P$ is an integral polyhedron, $S=\Conv(T)$, where $T$ is the set of  integral points in $S$.   
 Let ${ f}\in T$ be  arbitrary, and let $\mathcal{R}$ be the integral representation of $P$ corresponding to  ${ f}$. 
  Then we have  ${ f}\in S\subset H$
that implies  $s_\mathcal{R}(x,y)=0$. Therefore, 
 by the Converse Slack Theorem (Theorem \ref{conversethm}), $\mathcal{R}$ is a shift of $\mathcal{C}$. 
 We conclude that ${ f}\in \ell$ for every ${ f}\in T$, that is  $T\subset \ell$. Since $\ell$ is convex, $S=\Conv(T)\subseteq \ell$ follows.
\end{proof}

\section{The location polyhedron}
\label{Locsection}
The {representation polyhedron} $R_\epsilon^P\subset \mathbb{R}^{2n}$, with $\epsilon>0$,  of an interval order $P$ was introduced and  investigated  
by Doignon and Pauwels \cite{DoignonPauwels}. We use here the notation $D_P=R_1^P$, and call it the {\it location polyhedron} of $P$; the choice $\epsilon =1$ makes possible to use natural representations (in which the  interval  endpoints are non-negative  integers). 

 The system of inequalities defining  $D_P$ is obtained by substituting $\rho_x=r_x-\ell_x$ in the system (\ref{Rfull}), then adding the non-negativity inequalities $\rho_x\geq 0$ and
 $\ell_x\geq 0,\quad x=1,\ldots,n$. If $x\prec y$ then $\ell_x\leq r_x<\ell_y$, therefore, the inequality $\ell_y\geq 0$ is redundant in the system. After eliminating these redundant inequalities, we obtain the inequalities for every $x,y\in X$:
 \begin{eqnarray}\label{Dloc}
\left\{\qquad
\begin{array}{rcrl}
-\ell_x&\leq &0& \text{\ if $x$ is minimal in $P$} \\
-\ell_y+r_x&\leq&-1&  \text{\ if\ } x\prec y \\
\ell_x-r_y&\leq&0&   \text{\ if\ } x\| y, x\neq y\\
\ell_x-r_x&\leq&0&
\end{array}
\right.
\end{eqnarray}
Note that $D_P$ is obtained by projecting $R_P$ into the subspace of the first $2n$ location coordinates, 
then cutting  off pieces not in the positive quadrant. Let 
 ${c}_{loc}\in\mathbb{R}^{2n}$  be the location vector of the canonical representation of $P$. 
 Doignon and Pauwels \cite[Theorem 5]{DoignonPauwels} determined the facet-defining inequalities of  $D_P$, and derived that $D_P$ is a pointed affine cone  with its apex at  ${c}_{loc}$. 
A short direct proof is repeated here not relying on the explicit  list of facet-defining inequalities.

\begin{theorem}{\em (Doignon and Pauwels)} 
\label{DPcone}
 The location polyhedron $D_P$  is
a pointed affine cone  with its apex at  the canonical representation of $P$.
\end{theorem}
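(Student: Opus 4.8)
The plan is to show two things about $D_P$: that it is an affine cone with apex $c_{loc}$, and that this cone is pointed (i.e.\ contains no line, equivalently its apex is a vertex). For the cone structure, I would first establish that $c_{loc}$ lies in $D_P$ and that every point of $D_P$ can be written as $c_{loc}$ plus a nonnegative combination of recession directions. A natural way to get this for free is to invoke the Canonical Line Theorem (Theorem \ref{main}) together with the projection description: $D_P$ is the projection of $R_P$ onto the first $2n$ coordinates, intersected with the nonnegative orthant. The recession cone of $D_P$ is the projection of the recession cone of $R_P$ (intersected with the orthant), so understanding the recession directions of $D_P$ reduces to a homogeneous version of the system (\ref{Dloc}).

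The key computation is to read off the recession cone directly from (\ref{Dloc}). A direction $u=(\ell',r')\in\mathbb{R}^{2n}$ is a recession direction of $D_P$ precisely when it satisfies the homogeneous system obtained by setting all right-hand sides to zero: $-\ell_x'\le 0$ for minimal $x$, $-\ell_y'+r_x'\le 0$ for $x\prec y$, $\ell_x'-r_y'\le 0$ for $x\|y$, and $\ell_x'-r_x'\le 0$. I would verify that $c_{loc}$ is an apex by checking that $D_P = c_{loc} + \mathrm{rec}(D_P)$, i.e.\ that translating the entire recession cone to $c_{loc}$ recovers all of $D_P$. The inclusion $c_{loc}+\mathrm{rec}(D_P)\subseteq D_P$ is immediate from convexity; the reverse inclusion is where the canonical representation does real work: given any natural representation with location vector $v$, the vector $v-c_{loc}$ must be a recession direction, and this is exactly the content of Proposition \ref{minCanon} (every endpoint satisfies $\ell_x\ge\ell_x^*$ and $r_x\ge r_x^*$) combined with the slack inequalities of Theorem \ref{slackthm}, which guarantee that all the homogeneous inequalities above are satisfied by the difference.

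The hard part will be establishing \emph{pointedness}, namely that the recession cone contains no nontrivial line, or equivalently that the apex is a genuine vertex rather than merely a face. Suppose $u$ and $-u$ are both recession directions. Then all the homogeneous inequalities hold with equality for the indices that appear, and I would argue these forced equalities propagate through the order. Concretely, $\ell_x'-r_x'\le 0$ and its negation give $\ell_x'=r_x'$ for every $x$, so write $t_x:=\ell_x'=r_x'$. The constraints for comparable and incomparable pairs then force $t_x\le t_y$ whenever $x\prec y$ or $x\|y$ in the appropriate direction, and the minimality constraint forces $t_x\ge 0$ (and its negation $t_x\le 0$) for minimal $x$, hence $t_x=0$ there. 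Using Lemma \ref{canon}—every value $0,1,\dots,m-1$ is both a left and a right endpoint in the canonical representation—one can chain these equalities across the full spread of endpoints to conclude $t_x$ is constant across the whole poset, and the minimal-element constraint pins that constant to $0$. Thus $u=0$, proving the recession cone is pointed.

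An alternative and cleaner route for pointedness is to observe that pointedness of $D_P$ follows from pointedness (along the canonical line) established for $R_P$ in Theorem \ref{main}, since the only line in $\mathrm{rec}(R_P)$ is spanned by $k=(1,\dots,1,0,\dots,0)$, and intersecting with the nonnegative orthant $\ell_x\ge 0$ kills even that single lineality direction (one cannot move arbitrarily far in both the $+k$ and $-k$ directions while staying nonnegative). I expect the main obstacle to be bookkeeping: carefully matching the recession cone of the \emph{projected and truncated} polyhedron $D_P$ with the homogeneous system, since projection and intersection with the orthant interact, and making sure the redundancy eliminations that produced (\ref{Dloc}) do not discard an inequality needed to certify pointedness. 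Once the recession cone is correctly identified, both the apex-at-$c_{loc}$ claim and pointedness follow from the slack machinery of Section \ref{Canonicalsection}.
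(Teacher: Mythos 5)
Your proposal is correct, but it follows a genuinely different route from the paper's proof. The paper argues through facets: by the Canonical Line Theorem (Theorem \ref{main}), together with Doignon and Pauwels' observation that $D_P$ is full-dimensional, the point ${c}_{loc}$ lies on every facet of $D_P$; then ${c}_{loc}$ is shown to be a vertex by a convex-combination argument (if ${c}_{loc}=\lambda x_1+(1-\lambda)x_2$ with $x_1\neq{c}_{loc}$, Proposition \ref{minCanon} forces the contradiction $\ell^*_1>\ell^*_1$), and a polyhedron having a vertex that lies on all of its facets is a pointed affine cone with apex there. You instead work with the recession cone: you read $\mathrm{rec}(D_P)$ off the homogeneous version of (\ref{Dloc}), prove $D_P={c}_{loc}+\mathrm{rec}(D_P)$ --- where the nontrivial inclusion is exactly the Slack Theorem (Theorem \ref{slackthm}) plus Proposition \ref{minCanon} applied to $v-{c}_{loc}$ --- and obtain pointedness from a direct lineality-space computation. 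Two remarks on your version. First, your pointedness argument is simpler than you describe: once $u$ and $-u$ both satisfy the homogeneous system, every inequality is forced to equality, so $\ell'_x=r'_x=:t_x$ and $t_x=t_y$ for \emph{every} pair $x,y$ (any two elements are either comparable or incomparable, and both cases force equality), hence $t$ is constant and vanishes at minimal elements; no chaining through Lemma \ref{canon} is needed. Second, like the paper itself, you apply Proposition \ref{minCanon} and Theorem \ref{slackthm} (stated for integral representations) to arbitrary real points of $D_P$; this is harmless because the proof of Lemma \ref{right} works verbatim for real solutions of (\ref{Dloc}) thanks to the unit separation $-\ell_y+r_x\leq -1$ (or alternatively by integrality of $D_P$), but it deserves a sentence in either proof. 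As for what each approach buys: the paper's is shorter once Theorem \ref{main} is in hand, while yours is self-contained (it needs neither the full-dimensionality citation nor Theorem \ref{main}), and your explicit identification of $\mathrm{rec}(D_P)$ via the homogeneous system is precisely the object that the later ray and Hilbert-basis analysis of $Q^*_P$ builds on.
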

\begin{proof} We use the observation  \cite[Proposition 5]{DoignonPauwels} that  $D_P$ is full-dimensional; and as a corollary, (\ref{Dloc}) contains a unique minimal subsystem of facet-defining inequalities. 

By Theorem \ref{main}, the  canonical representation ${c}_{loc}=(\ell^*_1,r^*_1,\ldots, \ell^*_n,r^*_n)$ satisfies the facet-defining inequalities in (\ref{Rfull}), and hence the same inequalities in (\ref{Dloc}). Therefore, the point ${c}_{loc}$ is contained by all facets of $D_P$ as well. 

We verify next that  ${c}_{loc}$ is a vertex of $D_P$; equivalently, 
 there are no points ${x_1}, {x_2}\in$$ D_P\setminus\{{c}_{loc}\}$ and
$\lambda\in\mathbb{R}$, $0<\lambda<1$, such that ${c}_{loc}=$ $\lambda{x_1}+(1-\lambda){x_2}$.

For $i=1,2$ let ${x_i}=$$(\ell^{(i)}_1,r^{(i)}_1,\ldots, \ell^{(i)}_n,r^{(i)}_n)$. 
Since ${ x_1}\neq $ ${{c}}$$_{loc}$, there are coordinates where the vectors  disagree. Suppose w.l.o.g. that $\ell^{(1)}_1\neq \ell^*_1$ (the case $r^{(1)}_1\neq  r^*_1$ can be resolved similarly).
Then by Proposition \ref{minCanon}, we have $\ell^{(1)}_1> \ell^*_1$, and $\ell^{(2)}_1\geq \ell^*_1$.
Thus we obtain
\begin{equation*}
\ell^*_1=\lambda\ell^{(1)}_1+(1-\lambda)\ell^{(2)}_1>\lambda \ell^*_1+(1-\lambda) \ell^*_1= \ell^*_1,
\end{equation*}
a contradiction. Because ${{c}}$$_{loc}$ is a vertex contained by every facet of $D_P$, the theorem follows.
\end{proof}

 Doignon and Pauwels \cite[Theorem 4]{DoignonPauwels}  determined the facet-defining inequalities of the location polyhedron $D_P$. Here we present a short proof of their result, which emphasizes  our 
approaches throughout the present work that the canonical representation is a powerful model that encompasses full information of an interval order
and adds a combinatorial geometry view when investigating its structure.

\begin{theorem}{\em (Doignon and Pauwels)}  
\label{DPfacet} Let ${c}_{loc}=(\ell^*_1,r^*_1,\ldots, \ell^*_n,r^*_n)$  be the canonical representation of $P$. 
The facet-defining inequalities of the location polyhedron $D_P$  are for every $x,y\in X$:
 \begin{eqnarray}\label{Dirred}
\left\{\qquad
\begin{array}{rcrl}
-\ell_x&\leq &0,  & \text{if \ $x$ is minimal in $P$},\\
-\ell_y+r_x&\leq&-1, &  \text{if \  }\ell^*_y=r_x^*+1,\\
\ell_x-r_y&\leq&0,  & \text{if \ }  \ell^*_x=r_y^*, \ x\neq y,\\ 
\ell_x-r_x&\leq&0,  &   \text{if \ }  \ell^*_x=r_x^*.
\end{array}
\right.
\end{eqnarray}
\end{theorem}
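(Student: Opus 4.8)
The plan is to prove two inclusions: that \emph{only} the inequalities listed in (\ref{Dirred}) can be facet-defining, and that each of them genuinely is. Throughout I would use that $D_P$ is full-dimensional (Doignon--Pauwels, cited in the proof of Theorem \ref{DPcone}), so that the facet-defining inequalities are exactly the irredundant inequalities of any defining system such as (\ref{Dloc}), unique up to positive scaling. In particular every facet is cut out by one of the four types in (\ref{Dloc}), and it suffices to decide irredundancy type by type.

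For the first (necessity) inclusion I would invoke Theorem \ref{DPcone}: $D_P$ is a pointed affine cone with apex $c_{loc}$. Since every face of a cone contains its apex, every facet of $D_P$ contains $c_{loc}$, so every facet-defining inequality of (\ref{Dloc}) is satisfied with equality by $c_{loc}$. An inequality of the four types is tight at $c_{loc}$ precisely when the associated canonical slack vanishes, i.e. when $\ell^*_x=0$, $\ell^*_y=r^*_x+1$, $\ell^*_x=r^*_y$, or $\ell^*_x=r^*_x$ respectively, and these are exactly the side conditions in (\ref{Dirred}). (For minimal $x$ one has $D(x)=\varnothing$, whence $\ell^*_x=0$ automatically, so every inequality of the first type is retained.) Alternatively, one can bypass the apex and show directly that each slack-positive inequality is redundant: if $x\prec y$ but $\ell^*_y>r^*_x+1$, then Lemma \ref{canon} supplies a chain of coincident endpoints between $r^*_x$ and $\ell^*_y$, and telescoping the slack-zero inequalities along this chain (exactly as in the proof of Lemma \ref{right}) gives $\ell_y-r_x\ge \ell^*_y-r^*_x>1$, so the weaker inequality $\ell_y-r_x\ge 1$ is implied; the $\|$-type and equality-type are analogous.

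The second (sufficiency) inclusion, that each inequality of (\ref{Dirred}) is facet-defining, is the crux. For a slack-zero pair $(x_0,y_0)$ I would exhibit an integral representation $\mathcal{R}$ of $P$ with $s_{\mathcal{R}}(x_0,y_0)=0$ (respectively $\ell_{x_0}=0$ for the first type) and with every \emph{other} inequality of (\ref{Dloc}) holding strictly. Such an $\mathcal{R}$ lies in the relative interior of the face cut out by the $(x_0,y_0)$-inequality and on no other facet, so that face is locally a hyperplane through $\mathcal{R}$; hence it has dimension $2n-1$ and is a facet. To build $\mathcal{R}$ I would start from a widely spread representation in which every slack is large, obtained from $\mathcal{C}$ by generous gap extensions together with independent outward moves of the free endpoints, and then slide only the endpoints realizing $(x_0,y_0)$ until their slack drops to $0$ (a final rightward shift, which preserves all slacks, makes the minimality constraints strict).

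The main obstacle is verifying that this final slide can be performed while keeping every remaining slack strictly positive, i.e. that forcing $s(x_0,y_0)$ to zero never forces a second relation to become tight. This is where the combinatorial structure of $\mathcal{C}$ enters: because the endpoint incidences on $\{0,\dots,m-1\}$ are governed by Lemma \ref{canon} (equivalently, read off from the noses and hollows of $\mathcal{C}$), one can choose the spreading so that the targeted relation is the unique tight one. I expect the cleanest write-up to organize this as a short case analysis over the four types; for instance, for the type $\ell_x-r_x\le 0$ one makes $x$ a single point while stretching every interval forced to contain it and separating all other endpoints, and the three remaining types are handled in the same spirit.
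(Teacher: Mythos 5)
Your proposal is correct, and its skeleton---use full-dimensionality to identify facet-defining inequalities with the irredundant ones of (\ref{Dloc}), then a case analysis over the four types exhibiting a witness for each---is also the paper's skeleton; but you package both halves differently, and the comparison is worth recording. For necessity, the paper shows that (\ref{Dirred}) defines $D_P$ by invoking the Slack Theorem (Theorem \ref{slackthm}); your ``alternative'' telescoping argument is exactly that route, whereas your primary argument---every nonempty face of a pointed affine cone contains its apex, so by Theorem \ref{DPcone} each facet-defining inequality of (\ref{Dloc}) is tight at ${c}_{loc}$, which is precisely the list of side conditions in (\ref{Dirred})---is a genuinely different and slicker reduction: it reuses the cone theorem already proved, needs no chain argument, and is not circular since the paper establishes Theorem \ref{DPcone} before and independently of Theorem \ref{DPfacet}. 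For sufficiency, you work from inside ($D_P$-points at which only the target constraint is tight, certifying a face of dimension $2n-1$), while the paper works from outside (a ``surgery'' on the canonical diagram producing a point violating only the target constraint, certifying irredundancy); the two are equivalent via a convex combination with an interior point. The obstacle you flag---that closing the target slack must not create a second tight constraint---is real but discharged by your own recipe, so your sketch is completable rather than gapped: start from the scaled canonical representation $\ell_x\mapsto 3\ell^*_x+1$, $r_x\mapsto 3r^*_x+2$ (or the doubling of it, if you insist on integral points), in which every constraint of (\ref{Dloc}) has slack at least $1$ and every targeted slack-zero constraint has slack exactly $1$; then close the target by moving each of its two endpoints halfway toward the other. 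No other constraint of (\ref{Dloc}) involves both moved endpoints (such a constraint would require the opposite relation between $x_0$ and $y_0$), so every other constraint changes by at most $1/2$ and stays strict; for the type $-\ell_x\le 0$, slide $\ell_{x_0}$ down to $0$, which only increases the remaining slacks because $\ell_{x_0}$ occurs with coefficient $+1$ in every other constraint containing it. The paper leaves exactly this verification to its figures, so your version, once written out this way, is if anything the more self-contained of the two.
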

\begin{proof}
The Slack Theorem (Theorem \ref{slackthm}) implies that (\ref{Dirred}) as the subsystem of (\ref{Dloc}) defines $D_P$. To see that (\ref{Dirred}) is an irredundant system of inequalities, a straightforward case by case test shows that dropping 
any inequality from  the system (\ref{Dirred}) results in a subsystem with a feasible solution set larger than $D_P$.

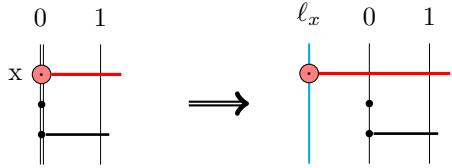
\begin{figure}[htp]
\begin{center}
\begin{tikzpicture}[scale=.8] 
\foreach \i in {0,1}{
   \draw[line width=.01em](\i,0)--(\i,2);}
    \draw[line width=.01em](0.05,0)--(0.05,2);
\node[label=above:{0}]()at(0,2){};
\node[label=above:{1}]()at(1,2){};

\node[A](L)at(0.02,.5){};\node[](R)at(1.3,.5){};\draw[line width=.1em](L)--(R);
\node[A](L)at(0.02,1){};
\node[Xr,label=left:{x}](L)at(0.02,1.5){.};\node[](R)at(1.5,1.5){};\draw[line width=.12em,red,](L)--(R);
\end{tikzpicture}
\hskip.5cm
\begin{tikzpicture}[scale=.8]
\draw[double, line width=.08em,->](-3,1)--(-2,1);
\foreach \i in {0,1}{
   \draw[line width=.01em](\i,0)--(\i,2);
 \node[label=above:{\i}]()at(\i,2){};  }
  \draw[line width=.08em,cyan](-1,0)--(-1,2);
 \node[label=above:{$\ell_x$}]()at(-1,2){}; 
\node[A](L)at(0,.5){};\node[](R)at(1.3,.5){};\draw[line width=.1em](L)--(R);
\node[A](L)at(0,1){};
\node[Xr](L)at(-1,1.5){.};\node[](R)at(1.5,1.5){};\draw[line width=.12em,red,](L)--(R);
\end{tikzpicture}
\end{center}
\caption{\quad  $x$ is minimal in $P$ \ and \ $-\ell_x> 0$} 
\label{1}
\end{figure}

The inequalities of (\ref{Dirred}) are interpreted on the diagram of the canonical representation of $P$. 
The violation of just one inequality is visualized by a local surgery on this diagram. 
 In this way we obtain a proof (almost without words) using Figures \ref{1} - \ref{4},  where one of the   three types of inequalities in the system (\ref{Dirred}) is violated, all the other ones remain satisfied.

The simplest surgery in  Fig.\ref{1} shows a feasible solution not in $D_P$ due to the introduced negative coordinate. 
In all other surgery a pair of endpoints is involved corresponding to an inequality selected by a slack zero pair of $P$.
 
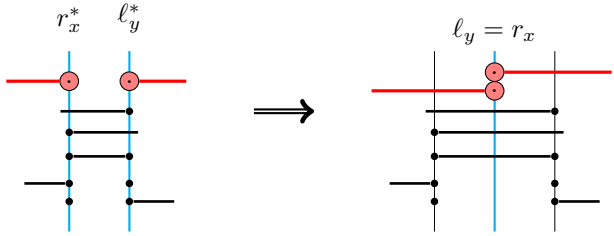
\begin{figure}[htp] 
\begin{center}
\begin{tikzpicture}[scale=.8]
  \foreach \i in {-1,...,0}{
   \draw[line width=.08em,cyan](\i,-1)--(\i,2);}
     
\node[label=above:{$\ell_y^*$}]()at(0,2){};
\node[label=above:{$r_x^*$}]()at(-1,2){};

\node[A](R)at(0,.25){};\node[A](L)at(-1,.25){};\draw[line width=.1em](L)--(R);
\node[A](L)at(-1,.65){};\node[](R)at(0.3,.65){};\draw[line width=.1em](L)--(R);
\node[A](L)at(0,1){};\node[](R)at(-1.3,1){};\draw[line width=.1em](L)--(R);
\node[Xr](L)at(0,1.5){.}; \node[](R)at(1.1,1.5){};\draw[line width=.12em,red,](L)--(R);
\node[Xr](L)at(-1,1.5){.}; \node[](R)at(-2.2,1.5){};\draw[line width=.12em,red,](L)--(R);

\node[A](L)at(0,-.5){};\node[](R)at(.9,-.5){};\draw[line width=.1em](L)--(R);
\node[A](L)at(-1,-.2){};\node[](R)at(-1.9,-.2){};\draw[line width=.1em](L)--(R);
\node[A](L)at(-1,-.5){}; \node[A](L)at(0,-.2){};
\end{tikzpicture}
\hskip.5cm
\begin{tikzpicture}[scale=.8]
\draw[double, line width=.08em,->](-4,1)--(-3,1);
  \foreach \i in {-1,1}{
   \draw[line width=.01em](\i,-1)--(\i,2);}
    \draw[line width=.08em,cyan](0,-1)--(0,2);
       
\node[label=above:{$\ell_y=r_x$}]()at(0,1.8){};

\node[A](R)at(1,.25){};\node[A](L)at(-1,.25){};\draw[line width=.1em](L)--(R);
\node[A](L)at(-1,.65){};\node[](R)at(1.3,.65){};\draw[line width=.1em](L)--(R);
\node[A](L)at(1,1){};\node[](R)at(-1.3,1){};\draw[line width=.1em](L)--(R);
\node[Xr](L)at(0,1.65){.}; \node[](R)at(2.1,1.65){};\draw[line width=.12em,red,](L)--(R);
\node[Xr](L)at(0,1.34){.}; \node[](R)at(-2.2,1.34){};\draw[line width=.12em,red,](L)--(R);

\node[A](L)at(1,-.5){};\node[](R)at(1.9,-.5){};\draw[line width=.1em](L)--(R);
\node[A](L)at(-1,-.2){};\node[](R)at(-1.9,-.2){};\draw[line width=.1em](L)--(R);
\node[A](L)at(-1,-.5){}; \node[A](L)at(1,-.2){};
\end{tikzpicture}

\end{center}
\caption{\quad   $\ell^*_y=r_x^*+1$ \ and \  $-\ell_y+r_x > -1$ } 
\label{3}
\end{figure}
  
\begin{figure}[htp] 
\begin{center}

\begin{tikzpicture}[scale=.8]

  \foreach \i in {-1,1}{
   \draw[line width=.01em](\i,-.5)--(\i,2);}
    \draw[line width=.08em,cyan](0,-.5)--(0,2);
       
\node[label=above:{$\ell_x^*=r_y^*$}]()at(0,1.8){};

\node[A](R)at(0,.25){};
\node[A](L)at(0,.65){};\node[](R)at(1.3,.65){};\draw[line width=.1em](L)--(R);
\node[A](L)at(0,1){};\node[](R)at(-1.3,1){};\draw[line width=.1em](L)--(R);
\node[Xr](L)at(0,1.65){.}; \node[](R)at(2.1,1.65){};\draw[line width=.12em,red,](L)--(R);
\node[Xr](L)at(0,1.34){.}; \node[](R)at(-2.2,1.34){};\draw[line width=.12em,red,](L)--(R);

\node[A](L)at(1,-.2){};\node[](R)at(1.9,-.2){};\draw[line width=.1em](L)--(R);
\node[A](L)at(-1,-.2){};\node[](R)at(-1.9,-.2){};\draw[line width=.1em](L)--(R);
\end{tikzpicture}
\begin{tikzpicture}[scale=.8]
\draw[double, line width=.08em,->](-4,1)--(-3,1);
  \foreach \i in {-1,...,0}{
   \draw[line width=.08em,cyan](\i,-.5)--(\i,2);}
       \foreach \i in {-2,1}{
   \draw[line width=.01em](\i,-.5)--(\i,2);}
   
\node[label=above:{$\ell_x$}]()at(0,1.8){};
\node[label=above:{$r_y$}]()at(-1,1.8){};

\node[A](R)at(0,.25){};\node[A](L)at(-1,.25){};\draw[line width=.1em](L)--(R);
\node[A](L)at(-1,.65){};\node[](R)at(1.3,.65){};\draw[line width=.1em](L)--(R);
\node[A](L)at(0,1){};\node[](R)at(-2.3,1){};\draw[line width=.1em](L)--(R);
\node[Xr](L)at(0,1.5){.}; \node[](R)at(1.8,1.5){};\draw[line width=.12em,red,](L)--(R);
\node[Xr](L)at(-1,1.5){.}; \node[](R)at(-2.7,1.5){};\draw[line width=.12em,red,](L)--(R);

\node[A](L)at(1,-.2){};\node[](R)at(1.9,-.2){};\draw[line width=.1em](L)--(R);
\node[A](L)at(-2,-.2){};\node[](R)at(-2.7,-.2){};\draw[line width=.1em](L)--(R);
\end{tikzpicture}

\end{center}
\caption{\quad    $\ell^*_x=r_y^* $ \ and \ $\ell_x-r_y> 0$,} 
\label{4}
\end{figure}
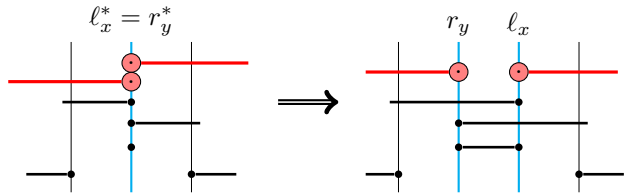
 When  removing and adding new endpoints, each surgery includes necessary adjustments of  the endpoints to maintain integer values, while not changing the order between pairs of $P$, except the pair involved in the violated inequality.
 If $x\neq y$ then the feasible solutions of the modified systems in Figures \ref{3} and \ref{4} are interval representations of a poset different from $P$. If $x=y$ then the surgery results in a feasible vector with $r_x<\ell_x$ that is not even an interval representation, since the right endpoint of an interval  cannot lie below its left endpoint.
\end{proof}
Pairs $(x,y)$ satisfying  $\ell^*_y=r_x^* +1$ or $\ell^*_x=r_y^* $, $y\neq x$, in  the linear system (\ref{Dirred}) are  called by Doignon, Pauwels  \cite[Proposition 3]{DoignonPauwels} noses or hollows, respectively. In the last inequality $x\in X$ is called contractible when $\ell^*_x=r_x^*$.
These pairs in an interval order are related to standard concepts  of cover and critical pairs in poset theory  (cf. Trotter \cite{TrotterBook}); they also appear in our investigations as slack zero pairs:  
 \begin{itemize}
\item $(x,x)$ is a {\it slack zero pair}  if
$\ell^*_x=r^*_x$; 
\item $(x,y)$ is a {\it slack zero cover pair} if  $\ell^*_y=r_x^* +1$; 
\item  $(x,y)$ is called a {\it sharp pair} if $\ell^*_x=r_y^*$, $y\neq x$, 
equivalently, if $(y,x)$ is a  slack zero critical pair. 
\end{itemize}
Then the system (\ref {Dirred}) defining $D_P$ becomes: for every $x,y\in X$, 
 \begin{eqnarray}\label{defDP}
\left\{\qquad
\begin{array}{rcrl}
-\ell_x&\leq &0  & \text{\ if $x$ is a minimal element}\\
-\ell_y+r_x&\leq&-1 &  \text{\ if \  $(x,y)$ is a slack zero cover pair}\\
\ell_x-r_y&\leq&0  & \text{\ if \ $(x,y)$ is a sharp pair}\\
\ell_x-r_x&\leq&0  &  \text{\ if \ $(x,x)$ is  a slack zero pair}.\\ 
\end{array}
\right.
\end{eqnarray}

\section{The length polyhedron}
\label{Lengthsection}

The length polyhedron  $Q_P$ of an interval order $P=(X,\prec)$ is obtained from  the {full representation polyhedron} $R_P\subset \mathbb{R}^{3n}$  by a projection into $\mathbb{R}^{n}$, that is, by keeping the last $n$ length coordinates of each vector of $R_P$. Similar to the location polyhedron $D_P$ (see Theorem \ref{DPcone}) the length polyhedron $Q_P$ is a conic polyhedron.

\begin{theorem}
\label{Qcone}
The length polyhedron $Q_P$ of an interval order $P$ is an affine cone with its apex at the length vector of the canonical representation of $P$.
\end{theorem}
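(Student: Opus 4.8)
The plan is to realize $Q_P$ as a linear image of the location polyhedron $D_P$ and then transport the conic structure of $D_P$ (Theorem \ref{DPcone}) across this map. Let $\Phi\colon\mathbb{R}^{2n}\to\mathbb{R}^n$ be the linear map $\Phi(\ell_1,r_1,\ldots,\ell_n,r_n)=(r_1-\ell_1,\ldots,r_n-\ell_n)$. First I would check that $Q_P=\Phi(D_P)$. The inclusion $\Phi(D_P)\subseteq Q_P$ is immediate, since every natural representation counted by $D_P$ yields a feasible point of $R_P$ whose length coordinates are $r_x-\ell_x$. For the reverse inclusion, given $\rho\in Q_P$ I would pick a witness $(\ell,r,\rho)\in R_P$ and shift all $2n$ endpoints by a common large constant. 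This operation preserves every inequality of (\ref{Rfull}), since the order constraints involve only endpoint differences and the relations $\rho_x=r_x-\ell_x$ are untouched; it is exactly translation along the lineality direction $k=(1,\ldots,1,0,\ldots,0)$ of the Canonical Line Theorem (Theorem \ref{main}). The shift fixes $\rho$ and makes all endpoints nonnegative, landing in $D_P$, so $\rho\in\Phi(D_P)$ and therefore $Q_P=\Phi(D_P)$.

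Next I would invoke Theorem \ref{DPcone}: $D_P$ is a pointed affine cone with apex $c_{loc}$, so $D_P=c_{loc}+C$, where $C$ is its (pointed, polyhedral) recession cone. Since a linear map sends a cone to a cone and a polyhedral cone to a polyhedral cone, $Q_P-\rho^*=\Phi(D_P)-\Phi(c_{loc})=\Phi(C)$ is again a polyhedral cone, where $\rho^*=\Phi(c_{loc})=(r_1^*-\ell_1^*,\ldots,r_n^*-\ell_n^*)$ is precisely the length vector of the canonical representation. Thus $Q_P$ is an affine cone with apex candidate $\rho^*$; what remains is to confirm that $\rho^*$ is a genuine vertex, i.e.\ that the image cone $\Phi(C)$ is pointed.

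The main obstacle is exactly this pointedness: a linear image of a pointed cone need not be pointed, so I must rule out that $\Phi$ collapses a recession ray of $C$ into a line. The key observation is that the constraint $\ell_x-r_x\le 0$ is valid on $D_P$, hence every recession direction $d=(\delta\ell_1,\delta r_1,\ldots,\delta\ell_n,\delta r_n)\in C$ satisfies $\delta\ell_x\le\delta r_x$, that is $\Phi(d)_x=\delta r_x-\delta\ell_x\ge 0$ for all $x$. Consequently every $\rho=\rho^*+\Phi(d)\in Q_P$ obeys $\rho\ge\rho^*$ coordinatewise; this is the same minimality of canonical lengths delivered by the Slack Theorem (Theorem \ref{slackthm}) with $x=y$. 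Minimizing the functional $\sum_x\rho_x$ over $Q_P$ then attains its value uniquely at $\rho^*$, since equality forces $\Phi(d)=0$ and hence $\rho=\rho^*$. Therefore $\rho^*$ is a vertex of $Q_P$; an affine cone possessing a vertex is pointed and that vertex is its apex, so $Q_P$ is a pointed affine cone with apex at the length vector $\rho^*$ of the canonical representation, as claimed.
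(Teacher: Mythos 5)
Your proof is correct, and it takes a genuinely different route from the paper's. The paper works directly with the full representation polyhedron $R_P$: by the Canonical Line Theorem (Theorem \ref{main}) the intersection of the facets of $R_P$ is the line $c+t\,k$, which projects to the single point $\rho^*$ because $k$ vanishes on the length coordinates, and the minimality of the canonical representation (the paper cites Proposition \ref{minCanon}) then identifies $\rho^*$ as the apex. You instead factor the projection through the location polyhedron: you establish $Q_P=\Phi(D_P)$ by a shift-normalization argument, import the conic structure of $D_P$ from Theorem \ref{DPcone}, and push it forward under $\Phi$. Your route makes explicit two points the paper's one-line proof leaves implicit: why $Q_P$ is an affine cone at all, and why pointedness survives the linear map (it need not in general). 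Your fix for the latter is clean: every recession direction $d$ of $D_P$ satisfies $\Phi(d)\geq 0$ because $\ell_x-r_x\leq 0$ is valid on $D_P$, so $\Phi(C)\subseteq\mathbb{R}^n_{\geq 0}$, and $\rho^*$ is the unique minimizer of $\sum_x\rho_x$ over $Q_P$, hence a vertex and therefore the apex. Your appeal to the diagonal case of the Slack Theorem (Theorem \ref{slackthm}) for the coordinatewise bound $\rho\geq\rho^*$ is also the sharper citation: endpointwise minimality ($\ell^*_x\leq\ell_x$, $r^*_x\leq r_x$) from Proposition \ref{minCanon} does not by itself bound lengths from below, since both endpoints may increase. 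The costs of your approach are mild: you rely on the Doignon--Pauwels result for $D_P$ rather than on the paper's own structural theorem for $R_P$, and your pointedness argument is slightly redundant, since containment of $\Phi(C)$ in the nonnegative orthant already rules out lines before the unique-minimizer step.
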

\begin{proof}
Due to  the Canonical Line Theorem (Theorem \ref{main}), the
projection of the 
 intersection of the  facets of the full polyhedron $R_P$ 
 is equal to the vector of interval lengths in the canonical representation of $P$. Then the minimality of the
 canonical representation expressed in Proposition \ref{minCanon} implies  that the length  vector of the canonical representation is the apex of $Q_P$.
 \end{proof}

The linear system defining  $Q_P$ can be obtained from the minimal system (\ref{defDP}) that was 
derived for  the location polyhedron $D_P$ in Theorem \ref{DPfacet}.
The transformation proceeds in two steps, 
first we discard the variables $r_x$ from (\ref{defDP}) by substituting $r_x=\ell_x+\rho_x$ for every $x\in X$. 
This leads to an equivalent linear system containing the inequalities
 for every $x,y\in X$:
\begin{eqnarray}\label{LRO}
\left\{
\begin{array}{rcll}
 \ell_x+\rho_x+1&\leq& \ell_y\  & \text{\ if $(x,y)$ is a slack zero cover pair},\ \\
\ell_x&\leq& \ell_y+\rho_y\  & \text{\ if $(x,y)$ is a sharp pair},\\
-\rho_x&\leq& 0 &  \text{\ if \ $(x,x)$ is a slack zero  pair}.
\end{array}
\right.
\end{eqnarray}
Note that the  inequalities of the form $-\ell_x\leq 0$  in (\ref{defDP}) are meaningless for $Q_P$.
In the second step of the transformation we discard the variables 
 $\ell_x$ from (\ref{LRO})  by using Fourier-Motzkin elimination (FME). For this step we introduce a directed graph theory model.

\subsection*{The \FaceGraph  of $P$}
 To help characterize the length polyhedron $Q_P$ of an interval order $P=(X,\prec)$ we build a blue/red colored directed graph with arcs weighted with the inequalities of the system (\ref{LRO}).  
The {\it \FaceGraph} $G_P$ 
has vertex set $V=\{\rho_1,\rho_2,\ldots,\rho_n\}$; and   
 it has  red loops, and blue or red arcs between distinct  $\rho_x,\rho_y\in V$  with arc weights as follows:
\begin{eqnarray}\label{FaceGraphWeights}
\left\{	\begin{array}{ccll}
	Arc &Weight\\  
 \rho_x\red \rho_x \ &- \rho_x \leq 0 & \Leftrightarrow &  \text{$(x,x)$ is a slack zero  pair},\\ 
    \rho_x\blue \rho_y \  &\ell_x + \rho_x + 1 \leq \ell_y  & \Leftrightarrow & (x,y) \text{ is a slack zero cover pair}, \\ 
     \rho_x\red \rho_y\   &\ell_x \leq \ell_y + \rho_y & \Leftrightarrow & (x,y) \text{ is a sharp pair}.
\end{array}\right.
\end{eqnarray}  

Fig.\ref{FMexample} depicts the canonical representation of  the interval order $P=[0,1,2,1,0,2,3]$ and its \FaceGraph

 \def\XAxisBump{.7} 
\def\MaxVerticalLine{4}  
\def\MinVerticalLine{0}  
\def\IntervalThickness{1.4}
\def\Gap{1.8} 
\def\VGap{.5}   
\def\Epsilon{0.3}  
\def\Height{3} 

\begin{figure}[H]
	\begin{center}
		\begin{minipage}{.1\textwidth}
		\end{minipage}\hskip1.15cm
		\begin{minipage}{.45\textwidth}
	\begin{tikzpicture}[scale=.8]
	\node () at (7,1){};
\foreach \i in {0,...,4}{
    \draw[line width=.02em](3+\i,4)--(3+\i,6);
 \node[]()at(3+\i,6.2){\tiny{\i}};   }

\node[A,label=left:1](L) at (3,5) {};
\node[A,label=left:2]() at (4,4.5) {};

\node[A,label=left:6](L) at (5,4.5) {};
\node[A](R) at (6,4.5) {};
\draw[line width=.1em](L)--(R); 

\node[A,label=left:5](L) at (3,5.5) {};
\node[A](R) at (5,5.5) {};
\draw[line width=.1em](L)--(R); 

\node[A,label=left:7](L) at (6,5) {};
\node[A](R) at (7,5) {};
\draw[line width=.1em](L)--(R);

\node[A,label=left:4](L)at (4,5) {};
\node[A](R) at (5,5) {};
\draw[line width=.1em](L)--(R); 

\node[A,label=left:3]()at (7,4.5) {};
\end{tikzpicture}
		\end{minipage}
\begin{tikzpicture}[scale=.75]

\node[A,label=left:{ $\rho_4$}](2) at (1,0) {};
\node[A,label=left:{ $\rho_1$}](1) at (3,-1) {}; 

\node[A,label=right:{ $\rho_5$}](5) at (5,0) {};

\node[A,label=left:{ $\rho_6$}](6) at (0,2) {};
\node[A,label=right:{ $\rho_7$}](4) at (5,3) {};

\node[A,label=left:{ $\rho_3$}](3) at (0,4) {};

\node[A,label=right:{ $\rho_2$}](7) at (3,5) {};
\draw[line width=.1em,red,->>](2)--(7);
\draw[line width=.1em,cyan,->](1)--(7);
\draw[line width=.1em,cyan,->>](7)--(6);
\draw[red,line width=.1em,->] (7) edge[out=45, in=135, looseness=12] (7);

\draw[line width=.1em,red,->>](3)--(4);
\draw[line width=.1em,red,->>](4)--(6);
\draw[line width=.1em,red,->](5)--(1);
\draw[line width=.1em,cyan,->](5)--(4);
\draw[line width=.1em,cyan,->>](2)--(4);
\draw[line width=.1em,red,->](6)--(5);
\draw[line width=.1em,red,<-](2)--(6);
\draw[line width=.1em,cyan,->](1)--(2);

\draw[line width=.1em,cyan,->](6)--(3);
\draw[red,line width=.1em,->] (1) edge[out=200, in=-65, looseness=12] (1);
\draw[red,line width=.1em,->] (3) edge[out=45, in=135, looseness=12] (3);

\end{tikzpicture}
	\end{center}	
\vskip-2cm	
	\caption{Canonical representation of  $P=[0,1,2,1,0,2,3]$ and its \FaceGraph}
\label{FMexample}
\end{figure}
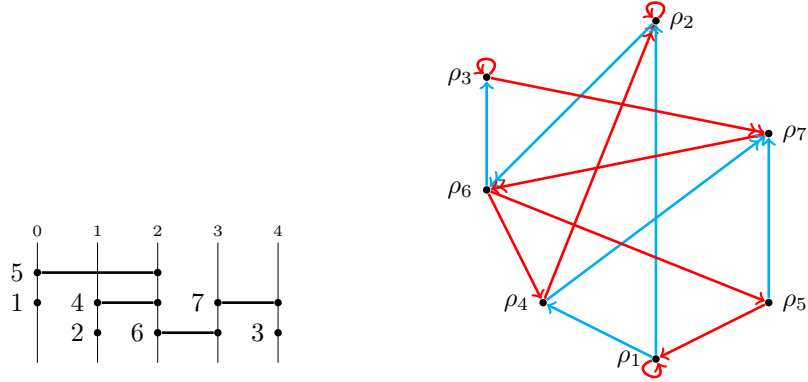

Observe that the sum of the two arc weights belonging to a 3-path $\rho_x\black\rho_y\black\rho_z$
is an inequality that does not contain $\ell_y$. 

Therefore, the sum of arc weights along a directed cycle of $G$ yields an inequality  with $\rho$-variables only. We show next that 
 every such {\it cycle inequality}  has the form 
\begin{eqnarray}\label{CYC}
 \gamma \ +\sum_{i\in A}\rho_i \ \leq \ \sum_{j\in B}\rho_j\  ,
 \end{eqnarray}
where $\gamma$ is a non-negative integer and $A,B\subset X$ are disjoint, and they are determined as follows. 

\begin{proposition}
\label{CYCLE INEQUALITY RULE}
Let $C\subset G$ be a directed cycle of length at least two of the \FaceGraph. For every  $3$-path $T=(\rho_x\rightarrow \rho_s\rightarrow \rho_y)$  along  $C$,  $s\in A$ if both arcs of $T$ are blue, $s\in B$ if both arcs of $T$ are red, and $s\notin A\cup B$, otherwise. The constant $\gamma$ counts the number of  blue arcs along $C$.
A loop is considered as a singleton red cycle $C=(\rho_s)$,  and it determines the cycle inequality $0\leq \rho_s$.
\end{proposition}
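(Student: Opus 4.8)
The plan is to evaluate the summed inequality arc-by-arc, splitting each arc's contribution into three kinds of terms: location variables $\ell$, length variables $\rho$, and constants. First I would record every arc weight in the uniform form ``left-hand side $\le 0$''. Rewriting the blue weight $\ell_x+\rho_x+1\le\ell_y$ as $\ell_x-\ell_y+\rho_x+1\le 0$ and the red weight $\ell_x\le\ell_y+\rho_y$ as $\ell_x-\ell_y-\rho_y\le 0$, I observe the key uniformity: an arc $\rho_a\rightarrow\rho_b$ contributes $+\ell_a$ at its tail and $-\ell_b$ at its head regardless of colour, while blue arcs additionally carry $+\rho_a$ and $+1$, and red arcs additionally carry $-\rho_b$.

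Next I would sum these contributions around the directed cycle $C$. Because $C$ is a directed cycle, every vertex $\rho_s$ on $C$ is the head of exactly one arc and the tail of exactly one arc; hence the location terms $+\ell_s$ (from its outgoing arc) and $-\ell_s$ (from its incoming arc) cancel. So all $\ell$-variables telescope away and the summed inequality involves only the $\rho$-variables and a constant. This confirms in particular the observation quoted just before the statement: a single $3$-path $\rho_x\rightarrow\rho_s\rightarrow\rho_y$ already eliminates $\ell_s$.

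Then I would read off the constant and the $\rho$-coefficients. Only blue arcs carry a constant, namely $+1$ each, so the total constant equals $\gamma$, the number of blue arcs of $C$, which is a non-negative integer. For a fixed vertex $\rho_s$ in the $3$-path $\rho_x\rightarrow\rho_s\rightarrow\rho_y$, its outgoing arc contributes $+\rho_s$ precisely when blue and $0$ when red, while its incoming arc contributes $-\rho_s$ precisely when red and $0$ when blue. The four colour combinations therefore give coefficient of $\rho_s$ equal to $+1$ when both incident arcs are blue, $-1$ when both are red, and $0$ in the two mixed cases. Setting $A$ to be the vertices with both incident arcs blue and $B$ those with both incident arcs red, these sets are disjoint by construction, and the summed inequality is exactly $\gamma+\sum_{i\in A}\rho_i\le\sum_{j\in B}\rho_j$, the form (\ref{CYC}).

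Finally I would dispose of the degenerate case separately: a red loop at $\rho_s$ is a length-one cycle whose single weight $-\rho_s\le 0$ rewrites as $0\le\rho_s$, i.e. $\gamma=0$, $A=\varnothing$, $B=\{s\}$, matching the stated convention. There is no conceptual obstacle here; the whole argument is a sign-accounting exercise, and the only step requiring genuine care is keeping the $+\ell_a/-\ell_b$ and the $+\rho_a/-\rho_b$ bookkeeping straight across the two arc types so that the telescoping of location variables around the cycle is airtight.
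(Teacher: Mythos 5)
Your proof is correct and follows essentially the same route as the paper: both arguments directly sum the arc weights around the cycle and determine the coefficient of each $\rho_s$ by a four-way case analysis on the colours of the incoming and outgoing arcs at $s$, with $\gamma$ counted from the $+1$ carried by each blue arc and the red loop handled as the stated convention. The only difference is presentational—you organize the computation as per-arc contributions with explicit telescoping of the $\ell$-variables, whereas the paper tabulates the summed weight of each $3$-path pattern—but the underlying accounting is identical.
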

\begin{proof} Let $W$ be the inequality obtained as the sum of the arc weights in $C$. According to the color pattern of $T$ we obtain:

\begin{itemize}
\item for $T=(\rho_x\red \rho_s\red \rho_y)$  the weight sum is \ $\ell_x\leq \ell_y+\rho_s+\rho_y$, hence $s\in B$ 
\item for  $T=(\rho_x\blue \rho_s\blue \rho_y)$ the weight sum is \  $\ell_x+\rho_x+\rho_s+2\leq \ell_y$,  hence $s\in A$;
\item for $T=(\rho_x\blue \rho_s\red \rho_y)$ the label sum is  \ $\ell_x+\rho_x+1\leq \ell_y+\rho_y$, hence $s\not\in A\cup B$
\item for $T=(\rho_x\red \rho_s\blue \rho_y)$ the label sum is \ $\ell_x+1\leq \ell_y$, hence $s\not\in A\cup B$.  
\end{itemize}
Notice that each blue arc adds $1$ to the left of $W$, thus $\gamma$ counts the blue arcs of $C$. 
Therefore, $W$ takes the form in (\ref{CYC}) as stated.
\end{proof}

\begin{proposition} \label{CycleInequalitiesDefineLengthPolyhedron} 
The vector  $(\ell_1,\ldots,\ell_n, \rho_1,\ldots,\rho_n)$ is a feasible solution of the  linear system (\ref{LRO}) if and only if \ $\ell_i\geq 0$, for $1\leq i\leq n$, and the length vector $( \rho_1,\ldots,\rho_n)$ 
is a feasible solution of the system (\ref{CYC}) of all cycle inequalities. 
\end{proposition}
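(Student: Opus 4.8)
The statement to prove is an equivalence between feasibility of system (\ref{LRO}) and the combination of nonnegativity of the $\ell_i$ together with feasibility of the cycle inequality system (\ref{CYC}). This is exactly the kind of statement that Fourier-Motzkin elimination is designed to settle: eliminating all the $\ell$-variables from (\ref{LRO}) should produce precisely the cycle inequalities, and the soundness/completeness of FME gives the equivalence.

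\paragraph{Plan.} The forward direction is the easy one. Suppose $(\ell_1,\dots,\ell_n,\rho_1,\dots,\rho_n)$ satisfies (\ref{LRO}). Then each $\rho_x\ge 0$ from the loop inequalities, and the $\ell_i\ge 0$ conditions come from the minimality of the representation. For any directed cycle $C$ in the \FaceGraph, the cycle inequality is by construction (Proposition \ref{CYCLE INEQUALITY RULE}) the sum of the arc-weight inequalities around $C$; summing valid inequalities yields a valid inequality, and the $\ell$-variables telescope and cancel exactly because each vertex $\rho_s$ appears as the head of one arc and the tail of the next along $C$. Hence $(\rho_1,\dots,\rho_n)$ satisfies every cycle inequality. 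I would state this cancellation once and cite Proposition \ref{CYCLE INEQUALITY RULE} for the bookkeeping of $\gamma$, $A$, and $B$.

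\paragraph{The converse and the core of the argument.} The substantive direction is that any $\rho$-vector satisfying all cycle inequalities, together with a nonnegativity constraint, extends to a full feasible vector of (\ref{LRO}) by a suitable choice of the $\ell_x$. The cleanest route is to recognize (\ref{LRO}) as a system of \emph{difference constraints}: every inequality, after moving terms, bounds a difference $\ell_y - \ell_x$ (or $\ell_x-\ell_y$) from one side by a constant that depends only on the fixed $\rho$-values. Concretely, a blue arc $\rho_x\blue\rho_y$ reads $\ell_y-\ell_x\ge \rho_x+1$ and a red arc $\rho_x\red\rho_y$ reads $\ell_x-\ell_y\le \rho_y$, i.e.\ $\ell_y-\ell_x\ge -\rho_y$. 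Thus each arc of $G_P$ is a lower bound on $\ell_{\text{head}}-\ell_{\text{tail}}$, making $G_P$ a constraint graph in the standard shortest-path sense. Such a system is feasible if and only if the constraint graph has no directed cycle of positive total weight (equivalently, one can set $\ell$ via a single-source shortest-path potential). The total weight of a cycle $C$ in this lower-bound formulation is exactly $\gamma+\sum_{i\in A}\rho_i-\sum_{j\in B}\rho_j$, so the no-positive-cycle condition is \emph{precisely} the assertion that every cycle inequality (\ref{CYC}) holds. I would make this identification explicit, then invoke the difference-constraint feasibility theorem (potentials exist iff no positive cycle) to produce the $\ell_x$. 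Finally, the $\ell_x\ge 0$ requirement is handled by the freedom to add a common constant to all potentials—shifting all $\ell_x$ by the same amount preserves every difference and hence every constraint—so one translates the potential into the nonnegative orthant.

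\paragraph{Main obstacle and cleanup.} The one place needing care is the role of the constant $\gamma$ and the correct sign conventions so that ``no positive-weight cycle'' lines up with (\ref{CYC}) rather than its negation; a single sign slip inverts the whole criterion, so I would fix the convention that each arc weight is a lower bound on $\ell_{\text{head}}-\ell_{\text{tail}}$ and verify it against all four $3$-path cases of Proposition \ref{CYCLE INEQUALITY RULE}. A secondary subtlety is that FME may in principle generate redundant or non-cycle inequalities; the point to emphasize is that because the constraint graph is a genuine difference system (each inequality involves exactly two $\ell$-variables with opposite signs, or one for the loops), the only irredundant consequences after eliminating all $\ell$'s are the simple-cycle sums, which is why the cycle inequalities alone suffice. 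I would therefore present the argument via the difference-constraint/shortest-path viewpoint rather than grinding FME by hand, treating FME only as the conceptual justification that eliminating $\ell$ yields exactly these cycle consequences, and relegating the equivalence of ``FME output'' and ``cycle inequalities'' to Proposition \ref{CYCLE INEQUALITY RULE}.
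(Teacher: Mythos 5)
Your proposal is correct, but it takes a genuinely different route from the paper. The paper proves the converse direction by actually running Fourier--Motzkin elimination on (\ref{LRO}): it introduces a sequence of ``stage graphs'' with retraction histories, shows that every intermediate inequality keeps the two-sided form $\ell_x+f+\alpha\leq h+\ell_y$ (Claim 1), and shows that the elimination process generates precisely the cycle inequalities of the key graph (Claim 2), so that the equivalence follows from the general soundness and completeness of FME; this machinery also reduces writing down the eliminated system to enumerating all directed cycles, which the authors exploit computationally (via Johnson's algorithm). You instead observe that, for fixed $\rho$, system (\ref{LRO}) is a difference-constraint system in the $\ell$-variables---each blue arc reads $\ell_y-\ell_x\geq\rho_x+1$, each red arc $\ell_y-\ell_x\geq-\rho_y$---and invoke the classical potential theorem: such a system is feasible if and only if the constraint digraph has no directed cycle of positive total weight. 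Your identification of the cycle weight with $\gamma+\sum_{i\in A}\rho_i-\sum_{j\in B}\rho_j$ is exactly the computation behind Proposition \ref{CYCLE INEQUALITY RULE}, the red loops fit in as singleton cycles giving $\rho_s\geq 0$, and the uniform-shift argument to push the potentials into the nonnegative orthant is sound; so your argument is complete and substantially shorter, trading the paper's self-contained constructive bookkeeping for a standard, citable theorem. One quibble: in the forward direction you assert that ``the $\ell_i\geq 0$ conditions come from the minimality of the representation,'' which is not a real argument---no inequality of (\ref{LRO}) can force $\ell_i\geq 0$, since the whole system is invariant under adding a common constant to all $\ell$'s. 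The nonnegativity clause belongs to the existential (converse) side of the statement, exactly as you handle it there; the paper's own proof treats it the same way, so this is an imprecision inherited from the statement rather than a defect of your method.
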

\begin{proof}
Every cycle inequality  in (\ref{CYC}) is obtained as the sum of the arc weight inequalities in  (\ref{LRO}), thus necessity follows.   For the converse,  
we verify that every cycle inequality of the system (\ref{CYC})
is obtained by 
 Fourier-Motzkin elimination (FME) that cancels the left endpoint variables $\ell_i$, $i=1\ldots,n$, from the
 linear system (\ref{LRO}). 

 Note that a given variable $\ell_s$, $1\leq s\leq n$, occurs in weights assigned to arcs of $G_P$ incident with vertex $\rho_s$. Moreover, 
 the weight of an arc $\rho_s\rightarrow \rho_y$ of any color is an upper bound for $\ell_s$, 
 and  
  the weight of an arc $\rho_x\rightarrow \rho_s$ of any color is  a lower bound  for $\ell_s$. 
 
The linear system  (\ref{LRO}) is going through stages $s=1,\ldots,n$, as FME progresses. 
In  Stage $s$, all lower bound inequalities and all upper bound inequalities for variable $\ell_s$ are combined to cancel out $\ell_s$. Stage $s$ starts  with a system equivalent to (\ref{LRO}), where no variable $\ell_i$ is present for $i<s$, and its  task  consists of eliminating the next variable $\ell_{s}$. 
FME terminates at stage  $s= n$, until  
all variables $\ell_1,\ldots,\ell_n$ are eliminated from the linear system. 

The elimination of $\ell_s$ in Stage $s$, $s=1,\ldots,n$, is realized in several steps. 
 To keep a record of the changes of the linear system FME is making during the stages  
  we introduce a sequence of auxiliary weighted digraphs $G_{FM}^{(s)}$, $s=1,\ldots,n$, called {\it stage graphs}.
   The vertex set of every stage graph
    is $V_{FM}=\{{1},\ldots,n\}$, where  $j\in V_{FM}$  indicates the  
 correspondence of $\rho_j$ and $\ell_j$. 
The stage graphs record the consecutive steps made by FME  by dynamically changing within each stage. 
There are two types of steps to make in a given Stage $s$:\\
\begin{itemize}
  \item[(a)] processing a 3-path (x\black s\black y) of $ G_{FM}^{(s)}$  for $s+1\leq x,y\leq n$, $x\neq y$,
   \item[(b)] processing a 2-cycle (x\black s\black x) in $G_{FM}^{(s)}$,  for  $s+1\leq x\leq n$. \\  
 \end{itemize}
 
In a step of type (a) for a 3-path (x\black s\black y),  $x\neq y$, of  the stage graph  a new arc x \new y is introduced in to $G_{FM}^{(s)}$. The weight of x \new y  is an inequality free from $\ell_s$ obtained as the sum
 of  the weights of  x\black s  and  s\black y. 
 
In this step a directed cycle of the stage graph going through vertex $s$ is retracted by cutting off $s$ and adding the new arc. A step of type (a)  is called a {\it retracting step}. Each arc, original arcs and new ones, is assigned its {\it retraction history}, a subpath of the \FaceGraph. The retraction history of an original arc x\black y in $G_P$ is the 2-path $(x,y)$. The {retraction history} of a new arc x \new y in Stage $s$ is the concatenation of the subpaths  assigned as the retraction history of  the arc x \new s and s \new y.  
  
In a step of type (b), called an {\it inflating step}, the cycle of $G_P$ is recovered that was retracted in some earlier retracting steps of type (a). When the 2-cycle x \new s \new x is processed  in an inflating step of Stage $s$,  the retracting history of the arcs
 x \new s  and   s \new x is a directed xs-path and a directed sx-path, respectively. The concatenation of these disjoint directed paths of $G_P$ yields a cycle of the \FaceGraph and is inserted to the cycle bin $\Gamma$.  

Stage $s$ starts with the inspection of all arcs going into vertex $s$  and going out from $s$ in the initial stage graph 
that is equal to the final  state of $G_{FM}^{(s-1)}$.
Note that all steps in Stage $s$ can be performed in arbitrary order. 
After no more steps remain to process, all arcs incident with vertex $s$ are removed, thus
making $s$ an isolated vertex, 
the resulting graph become the final state of $G_{FM}^{(s)}$. The whole procedure terminates when $G_{FM}^{(s)}$ consists of isolated vertices.
  
The FME is initialized with  $G_{FM}^{(0)}\cong G_P$ forming the input to Stage 1.
At the end of Stage $s$  all vertices $1,\ldots, s$ become isolated
 in the stage graph $G_{FM}^{(s)}$. These isolated vertices indicate that the corresponding variables $\ell_1,\ldots,\ell_s$  are eliminated from the linear system. 
 
 Each inequality free from left endpoint variables is obtained as the weight sum of a directed cycle of the \FaceGraph.
 Therefore, finding the linear system where the left endpoint variables are all eliminated is reduced to finding all directed cycles.

CLAIM 1. {\it  The weight of an arc x \black y in $G_{FM}^{(s)}$ is an inequality of the form
\begin{equation}\label{arclabel}
\ell_x +  f + \alpha \leq h+ \ell_y,
\end{equation}
 where $f,h$ are  expressions containing only $\rho$-variables, and $\alpha$ is a non-negative integer.}

\begin{proof}
For $x\neq y$, the inequality of the form (\ref{arclabel}) is an  upper bound for $\ell_x$ in terms of $\ell_y$, and it is a lower bound for $\ell_y$ in terms of $\ell_x$. This is true  for the weight of a red or a blue colored arc, x \red y or x \blue y,  in the initial stage graph $G_{FM}^{(0)}$ (that is isomorphic to the \FaceGraph $G_P$); indeed these inequality weights  are $\ell_x \leq \rho_y + \ell_y$ and $\ell_x +\rho_x+1\leq \ell_y$,
respectively.  

Assume that  x \new y  ($x\neq y$) is a new compound edge added to the stage graph in Stage s, and assume by induction that the weights of x \new s and s \new y are 
 $\ell_x +f_1+\alpha_1 \leq h_1 +\ell_s$ and $\ell_s +f_2+\alpha_2 \leq h_2 +\ell_y$, respectively. Thus 
  $\ell_x +(f_1+f_2)+(\alpha_1+\alpha_2) \leq (h_1 + h_2)+\ell_y$ 
 becomes the weight of x \new y; it has the form  (\ref{arclabel}) thus concluding the proof of Claim 1.
\end{proof}
 
 Due to Claim 1  a left end point variable $\ell_s$ occurs in the weight inequality of an arc if and only if the arc is incident with vertex $s$, therefore, steps (a) and (b) in Stage s of the FME procedure eliminate $\ell_s$ from the linear system. 
 
The formal FME procedure maintains an extending  list  $\Gamma= (\Gamma_1,\Gamma_2,\ldots)$ of the cycles  of the  \FaceGraph. This list is initialized with all 1-vertex cycles, that is with the red loops. \\

CLAIM 2. {\it The FME procedure explores all cycle inequalities.}\\

Before the straightforward induction proof of Claim 2 we give 
an example that illustrates the stages and the steps that the FME procedure is making. Fig.\ref{FMexample}  displays  the canonical representation and the \FaceGraph $G_P$ of  the interval order $P=[0,1,2,1,0,2,3]$.

 It takes 6 stages to explore all cycles of $G_P$. The description of each Stage $s$, $1\leq s\leq 6$, starts with the list of the arcs incident with vertex $s$ and their retraction history. This part is followed by the list of the steps of type (a) and (b), the new arcs and their retraction history. Next to this list is a diagram picturing the initial state of the  stage graph $G_{FM}^{(s)}$. Dashed arcs indicate all those arcs that will be removed in the final state of $G_{FM}^{(s)}$ making $s$ isolated.

The list $\Gamma$ where the cycles are collected  is initialized with three 1-cycles, 
$\Gamma_1, \Gamma_2$ and  $\Gamma_3$,  corresponding to the red loops at $\rho_1,\rho_2$ and $\rho_3$ in the \FaceGraph.
The six FME stages are presented in the tables and diagrams below. The left column of each table lists the arcs, the right column contains the corresponding retraction history of the arcs.

\begin{minipage}{0.49\textwidth}
\begin{tabular}{l|lc}
&STAGE 1\\
\hline\hline
5\black 1 & (5,1)\\
1\black 2 &(1,2)\\
1\black 4 & (1,4)\\
\hline
5\black 2 &(5,1,2)&\\
5\black 4 &(5,1,4)&\\
\hline\hline
\end{tabular}
\end{minipage}
\begin{minipage}{0.45\textwidth}
\begin{tikzpicture}[scale=.6]
	
\node[A,label=left:{ $4$}](4) at (1,0) {};
\node[A,label=left:{ $1$}](1) at (3,-1) {}; 

\node[A,label=right:{ $5$}](5) at (5,0) {};

\node[A,label=left:{ $6$}](6) at (0,2) {};
\node[A,label=right:{ $7$}](4) at (5,3) {};

\node[A,label=left:{ $3$}](3) at (0,4) {};
\node[A,label=right:{ $2$}](7) at (3,5) {};
\draw[line width=.1em,red,->](2)--(7);
\draw[line width=.1em,cyan,->>,dashed](1)--(7);
\draw[line width=.1em,cyan,->](7)--(6);
\draw[red,line width=.1em,dotted] (7) edge[out=45, in=135, looseness=12] (7);

\draw[line width=.1em,red,->](3)--(4);
\draw[line width=.1em,red,->](4)--(6);
\draw[line width=.1em,red,->>,dashed](5)--(1);
\draw[line width=.1em,cyan,->>,dashed](1)--(2);
\draw[line width=.1em,cyan,->](5)--(4);
\draw[line width=.1em,red,->](6)--(5);
\draw[line width=.1em,red,<-](2)--(6);
\draw[line width=.1em,cyan,->](2)--(4);
\draw[line width=.1em,->>] (5) edge[out=-170, in=-10] (2);
\draw[line width=.1em,->>] (5) edge[out=100, in=-40] (7);
\draw[line width=.1em,cyan,->](6)--(3);
\draw[red,line width=.1em,dotted] (1) edge[out=200, in=-65, looseness=12] (1);
\draw[red,line width=.1em,dotted] (3) edge[out=45, in=135, looseness=12] (3);
\end{tikzpicture}
\end{minipage}\\
\vskip.25cm

\begin{minipage}{0.49\textwidth}
\begin{tabular}{l|lc}
&STAGE 2\\
\hline\hline
4\black 2 & (4,2)\\
5\black 2 &(5,1,2)\\
2\black 6 & (2,6)\\
\hline
5\black 6&(5,1,2,6)&\\
4\black 6 &(4,2,6)&\\
\hline\hline
\end{tabular}
\end{minipage}
\begin{minipage}{0.45\textwidth}
\begin{tikzpicture}[scale=.6]

\node[A,label=left:{ $4$}](2) at (1,0) {};
\node[A,label=left:{ $1$}](1) at (3,-1) {}; 

\node[A,label=right:{ $5$}](5) at (5,0) {};

\node[A,label=left:{ $6$}](6) at (0,2) {};
\node[A,label=right:{ $7$}](4) at (5,3) {};

\node[A,label=left:{ $3$}](3) at (0,4) {};
\node[A,label=right:{ $2$}](7) at (3,5) {};
\draw[line width=.1em,red,->>,dashed](2)--(7);

\draw[line width=.1em,cyan,->>,dashed](7)--(6);

\draw[line width=.1em,red,->](3)--(4);
\draw[line width=.1em,red,->>](4)--(6);

\draw[line width=.1em,cyan,->](5)--(4);
\draw[line width=.1em,red,->>](6)--(5);
\draw[line width=.1em,red,<-](2)--(6);
\draw[line width=.1em,cyan,->>](2)--(4);

\draw[line width=.1em,cyan,->](6)--(3);

\draw[,line width=.1em,->>,dashed] (5) edge[out=100, in=-40] (7);
\draw[,line width=.1em,->] (5) edge[out=-170, in=-10] (2);

\draw[line width=.1em,
postaction={decorate, decoration={markings, mark=at position 0.6 with {\arrow{>}}}}]  
(2) to[out=145, in=-100] (6);
\draw[line width=.1em,
postaction={decorate, decoration={markings, mark=at position 0.6 with {\arrow{>}}}}]  
(5) to[out=120, in=0] (6);
\end{tikzpicture}
\end{minipage}
\newpage

\begin{minipage}{0.49\textwidth}
\begin{tabular}{l|lc}
&STAGE 3\\
\hline\hline
6\black 3 & (6,3)\\
3\black 7 &(3,7)\\
\hline
6\black 7&(6,3,7)&\\
\hline\hline
\end{tabular}
\end{minipage}
\begin{minipage}{0.41\textwidth}
\begin{tikzpicture}[scale=.6]

\node[A,label=left:{ $4$}](2) at (1,0) {};
\node[A,label=below:{ $1$}](1) at (3,-1) {}; 

\node[A,label=right:{ $5$}](5) at (5,0) {};

\node[A,label=left:{ $6$}](6) at (0,2) {};
\node[A,label=right:{ $7$}](4) at (5,3) {};

\node[A,label=left:{ $3$}](3) at (0,4) {};

\node[A,label=below:{ $2$}](7) at (4,-1) {};

\draw[line width=.1em,cyan,->>](2)--(4);
\draw[line width=.1em,red,->>>>,dashed](3)--(4);
\draw[line width=.1em,red,->](4)--(6);
\draw[line width=.1em,red,<-](2)--(6);
\draw[line width=.1em,cyan,->](5)--(4);
\draw[line width=.1em,red,->>](6)--(5);
\draw[line width=.1em,cyan,->,dashed](6)--(3);
\draw[line width=.1em,
postaction={decorate, decoration={markings, mark=at position 0.6 with {\arrow{>}}}}]  
(6) to[out=45, in=170, looseness=.45] (4);
\draw[line width=.1em,
postaction={decorate, decoration={markings, mark=at position 0.6 with {\arrow{>}}}}]  
(2) to[out=145, in=-100] (6);
\draw[line width=.1em,
postaction={decorate, decoration={markings, mark=at position 0.6 with {\arrow{>}}}}]  
(5) to[out=120, in=0] (6);
\draw[line width=.1em,->] (5) edge[out=-170, in=-10] (2);
\end{tikzpicture}
\end{minipage}\\


\begin{tabular}{l|lc}
&STAGE 4\\
\hline\hline
5\black 4 & (5,1,4)\\
6\black 4 &(6,4)\\
4\black 6 & (4,2,6)\\
4\black 7 & (4,7)\\
\hline
5\black{\scriptsize a}  6&(5,1,4,2,6)&\\
5\black{\scriptsize a} 7  &(5,1,4,7)&\\
6\black 6 &(6,4,2,6)\ =\ $\Gamma_4$&\\
6\black{\scriptsize a}  7 &(6,4,7)&\\
\hline\hline
\end{tabular}
\hskip2cm
\begin{minipage}{0.41\textwidth}
\begin{tikzpicture}[scale=.7]

\node[A,label=left:{ $4$}](2) at (1,0) {};
\node[A,label=below:{ $1$}](1) at (1,-1) {}; 

\node[A,label=right:{ $5$}](5) at (5,0) {};

\node[A,label=left:{ $6$}](6) at (0.5,2) {};
\node[A,label=right:{ $7$}](4) at (5,3) {};

\node[A,label=below:{ $3$}](3) at (3,-1) {};

\node[A,label=below:{ $2$}](7) at (2,-1) {};

\draw[line width=.1em,cyan,->>,dashed](2)--(4);

\draw[line width=.1em,red,->>](4)--(6);
\draw[line width=.1em,red,<<-,dashed](2)--(6);
\draw[line width=.1em,cyan,->>](5)--(4);
\draw[line width=.1em,red,->>](6)--(5);

\draw[line width=.1em,
postaction={decorate, decoration={markings, mark=at position 0.47 with {\arrow{>}}}}]  
 (6) to[out=45, in=150] node[midway, above] {a} (4);

\draw[line width=.1em,
postaction={decorate, decoration={markings, mark=at position 0.6 with {\arrow{>}}}}]  
(6) to[out=45, in=170, looseness=.45] (4);
\draw[line width=.1em,->>,dashed] (2) edge[out=145, in=-100] (6);
\draw[line width=.1em,
postaction={decorate, decoration={markings, mark=at position 0.6 with {\arrow{>}}}}]  
(5) to[out=180, in=-60] node[midway, above] {a} (6);
\draw[line width=.1em,
postaction={decorate, decoration={markings, mark=at position 0.6 with {\arrow{>}}}}]  
(5) to[out=120, in=0] (6);
\draw[line width=.1em,->>] (5) edge[out=60, in=-60]node [midway, right] {a}  (4);
\draw[line width=.1em,->>,dashed] (5) edge[out=-170, in=-10] (2);
\end{tikzpicture}
\end{minipage}
\vskip.25cm

\noindent As the stage graph changes parallel arcs may be introduced between the same end vertices. These parallel edges are distinguished by their retraction history, and by labels (like 5\new{\scriptsize a} 7).
\begin{tabular}{l|lc}
&STAGE 5\\
6\black 5 & (6,5)&\\
5\new 6 &(5,1,2,6)\\
5\new{\scriptsize a} 6  &(5,1,4,2,6)\\
5\black  7 & (5,7)\\
5\new{\scriptsize a}   7&(5,1,4,7)\\
\hline
6\new 6& (6,5,1,2,6)\ =\ $ \Gamma_5$&\\
6\new{\scriptsize a} 6& (6,5,1,4,2,6)\ =\ $ \Gamma_6$&\\
6\new{\scriptsize b} 7& (6,5,7)&\\
6\new{\scriptsize c} 7& (6,5,1,4,7)&\\
\hline\hline
\end{tabular}
\hskip1.5cm
\begin{tikzpicture}[scale=.7]
\node() at (0,2){};
\node[A,label=below:{ $4$}](2) at (4,-1) {};
\node[A,label=below:{ $1$}](1) at (1,-1) {}; 

\node[A,label=right:{ $5$}](5) at (5,0) {};

\node[A,label=left:{ $6$}](6) at (0.5,2) {};
\node[A,label=right:{ $7$}](4) at (5,3) {};

\node[A,label=below:{ $3$}](3) at (3,-1) {};

\node[A,label=below:{ $2$}](7) at (2,-1) {};

\draw[line width=.1em,red,->>](4)--(6);

\draw[line width=.1em,cyan,->>,dashed](5)--(4);
\draw[line width=.1em,red,->>,dashed](6)--(5);

\draw[line width=.1em,->] (6) edge[out=45, in=170, looseness=.45] (4);

\draw[line width=.1em,->>,dashed] (5) edge[out=60, in=-60] node[midway, right] {a} (4);

\draw[line width=.1em,dashed,
postaction={decorate, decoration={markings, mark=at position 0.6 with {\arrow{>}}}}]  
(5) to[out=180, in=-60] node[midway, above] {a} (6);
\draw[line width=.1em,dashed,
postaction={decorate, decoration={markings, mark=at position 0.4 with {\arrow{>}}}}]  
(5) to[out=120, in=0] (6);

 \draw[line width=.1em, 
postaction={decorate, decoration={markings, mark=at position 0.5 with {\arrow{>}}}}]  
(6) to[out=45, in=150] node[midway, above] {a} (4);
\draw[line width=.1em,->,
postaction={decorate, decoration={markings, mark=at position 0.46 with {\arrow{>}}}}]  
 (6) to[out=-0, in=200]  node[midway, below] {b}(4);
\draw[line width=.1em,->,
postaction={decorate, decoration={markings, mark=at position 0.65 with {\arrow{>}}}}]  
 (6) to[out=-35, in=225] node[near end, below] {c} (4);
\end{tikzpicture}
\vskip.5cm
\begin{tabular}{l|ll}
&STAGE 6\\
\hline\hline
7\black 6 & (7,6)&\\
6\new 7 &(6,3,7)\\
6\new{\scriptsize a} 7 &(6,4,7))\\
6\new{\scriptsize b} 7 &(6,5,7)\\
6\new{\scriptsize c} 7 &(6,5,1,4,7)\\
\hline
7\new 7& (7,6,3,7)\ =\ $ \Gamma_7$&\\
7\new{\scriptsize a} 7&  (7,6,4,7)\ =\ $ \Gamma_8$&\\
7\new{\scriptsize b} 7&  (7,6,5,7)\ =\ $ \Gamma_9$\\
7\new{\scriptsize c} 7& (7,6,5,1,4,7)\ =\ $ \Gamma_{10}$&\\
\hline\hline
\end{tabular}
\hskip1cm
\begin{tikzpicture}[scale=.8]

\node[A,label=below:{ $4$}](2) at (4,1) {};
\node[A,label=below:{ $1$}](1) at (1,1) {}; 

\node[A,label=below:{ $5$}](5) at (5,1) {};

\node[A,label=left:{ $6$}](6) at (0,2) {};
\node[A,label=right:{ $7$}](4) at (5,3) {};

\node[A,label=below:{ $3$}](3) at (3,1) {};

\node[A,label=below:{ $2$}](7) at (2,1) {};

\draw[line width=.1em,red,->>>,dashed](4)--(6);
\draw[line width=.1em, dashed, 
postaction={decorate, decoration={markings, mark=at position 0.5 with {\arrow{>}}}}]  
(6) to[out=45, in=150] node[midway, above] {a} (4);

\draw[line width=.1em,->,dashed,
postaction={decorate, decoration={markings, mark=at position 0.4 with {\arrow{>}}}}]  
 (6) to[out=35, in=170, looseness=.5]  (4);

 \draw[line width=.1em, dashed, 
postaction={decorate, decoration={markings, mark=at position 0.5 with {\arrow{>}}}}]  
(6) to[out=45, in=150] node[midway, above] {a} (4);
\draw[line width=.1em,->,dashed,
postaction={decorate, decoration={markings, mark=at position 0.46 with {\arrow{>}}}}]  
 (6) to[out=-0, in=205]  node[midway, below] {b}(4);
\draw[line width=.1em,->,dashed,
postaction={decorate, decoration={markings, mark=at position 0.7 with {\arrow{>}}}}]  
 (6) to[out=-30, in=225] node[near end, below] {c} (4);
\end{tikzpicture}

\newpage

{\it The proof of  Claim 2.} Let $C\subset G_P$ be a directed cycle of length at least 2, and let $t_c>s_c$ be the largest and the second largest vertices of $C$. At the beginning of Stage $s_C$ every vertex of $C$ is eliminated except vertices $s_C$ and  $t_C$.  In a step of type (b) 
arcs $s_C \new t_C$ and $t_C \new s_C$ are combined to the 2-cycle $(s_C \new t_C \new s_C)$, thus obtaining a retracted form of $C$. Then the concatenation of the retraction histories of these arcs yields cycle $C\subset G_P$ and adds it to the list $\Gamma$.   
   We obtained that FME collects all cycles  of the \FaceGraph in   $\Gamma$  concluding the proof of Claim 2.\\
    
The cycle inequalities, that is, the sum of the arc weights  along a cycle, can be derived by applying Proposition \ref{CYCLE INEQUALITY RULE}. 
 Thus the Fourier-Motzkin elimination translates the  linear system (\ref{LRO}) to the equivalent system  (\ref{CYC}) for all cycles of  the key graph $G_P$. Therefore, a feasible solution of the  system (\ref{CYC}) can be extended with appropriate left endpoint values $\ell_1,\ldots,\ell_n$ to obtain a feasible solution of the  system (\ref{LRO}). 
\end{proof}

\newcommand{\vertexScale}{1.0} 
\newcommand{\edgeLabelScale}{1.0} 
\definecolor{edgeBlue}{RGB}{0,0,255}
\definecolor{edgeRed}{RGB}{255,0,0}
\definecolor{edgeBlack}{RGB}{0,0,0}
\def\XAxisBump{.7} 
\def\MaxVerticalLine{4}  
\def\MinVerticalLine{0}  
\def\IntervalThickness{1.4}
\def\Gap{1.8} 
\def\VGap{.5}   
\def\Epsilon{0.3}  
\def\Height{3} 

\section{A unique minimal Hilbert basis}
\label{Hilbertsection}

The {\em Hilbert basis} for a polyhedral cone, introduced by Giles and Pulleyblank \cite{GilesPulley}, is a finite set of vectors 
such that each integral vector in the cone
is a non-negative integral linear combination of vectors from this set.  A Hilbert basis
is {\em integral} if it consists entirely of integral vectors.  An old result of van der Corput (see \cite{Schrijver} page $233$) proves that a pointed rational cone has unique minimal integral Hilbert basis.  
Theorem \ref{Qcone}  proves that the length polyhedron, $Q_P$, of an interval order $P=(X,\prec)$
is a pointed affine cone. 
The apex of this conic polyhedron is $\CanonicalVector$, which
is an integral vector corresponding to the lengths of the intervals in the canonical representation of $P$.  
Thus $Q_P = \CanonicalVector + Q_P^*$,  where \LengthCone is a pointed rational cone.   
Since $Q_P^*$ has a minimal integral Hilbert basis, it must be unique by van der Corput's theorem. 
Our goal in this section is to determine this basis.
As we shall show (Theorem \ref{UniqueMinimalHilbertBasis}), the vectors in this integral Hilbert basis are characteristic vectors of non-empty subsets of elements of $X$, each 
corresponding to a minimal extendable collection of intervals.  

Intuitively, a collection of intervals of the canonical representation, called {\it canonical intervals},  is an {\em extendable collection of intervals} if
the intervals in this collection 
can be lengthened arbitrarily and simultaneously by the same amount 
without affecting the lengths of other intervals or compromising the interval representation.  
This restates the notion of a ray of the length polyhedron.
To be more precise and combinatorial, we now introduce some notation to relate this to the canonical representation of the interval order $P$.

For $i \in \{0,\ldots,m\}$ (where $m$ is the magnitude of $P$), define the {\em $i$th gap set}, denoted $\GapSet_i$, to be the elements of $P$ whose
canonical intervals contain the entire interval $[i-1,i]$; more precisely,
for $1\leq i\leq m-1$,
$$\GapSet_i  = \{x \in X: \ell_x\leq i-1 \mbox{ and } i \leq r_x\}$$
is the $i$-th gap set  also called
the $(i-1,i)\mbox{-gap}$, and $\GapSet_{0}  = \varnothing = \GapSet_{m}$.  The gaps defining the empty gap sets $\GapSet_{0}$ and $\GapSet_{m}$ are occasionally referred to as
the $(-\infty,0)\mbox{-gap}$ and $(m-1,\infty)\mbox{-gap}$, respectively.

Next we define the left and right borders of the $i$th gap; these are sets of 
elements whose canonical intervals touch the $i$th gap but do not cross the gap.  More precisely,  
the {\em $i$th left-borderline set}, denoted $\LeftBorderlineSet_i$,
and the {\em $i$th right-borderline set}, denoted $\RightBorderlineSet_i$, are defined to be
$\LeftBorderlineSet_i  = \{x \in X: r_x=i-1\}$ and 
$\RightBorderlineSet_i  = \{x \in X: \ell_x=i\}$, respectively.
Note that $\LeftBorderlineSet_{0} = \varnothing=
\RightBorderlineSet_{m}$.

 Now we are ready to define fundamental  
 extenders for a poset $P=(X, \prec)$.  A non-empty set $S \subseteq X$
 is a {\em  fundamental extender} if there exists an index $0 \leq i \leq m$ and a subset $Z \subseteq X$ such
 that $S =  \GapSet_{i} \cup Z$ and  $Z \subseteq \LeftBorderlineSet_i $ or $Z \subseteq \RightBorderlineSet_i$.  
 Note that a fundamental extender can be defined by distinct gaps, that is, 
 more than one index and borderline subset may witness 
 that it is a fundamental extender (see Fig.\ref{AHilbertBasisExample} for such examples).  Let  $\mathcal{E}_P$ be the collection of all fundamental extenders for $P$.
 
 Let $\overrightarrow{\mathcal{E}_P}$ denote the set of characteristic vectors of sets in $\mathcal{E}_P$.
 
 \begin{theorem}  \label{HilbertBasis} For any interval order $P$, $\overrightarrow{\mathcal{E}_P}$ is an integral Hilbert basis for the cone $Q_P^*$.
 \end{theorem}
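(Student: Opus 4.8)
The plan is to prove that $\overrightarrow{\mathcal{E}_P}$ is an integral Hilbert basis for the cone $Q_P^*$ by establishing two things: first, that every fundamental extender's characteristic vector actually lies in the cone $Q_P^*$ (i.e., is a genuine ray direction), and second, that every integral vector in $Q_P^*$ is a non-negative integral combination of these characteristic vectors. The cone $Q_P^*$ is the recession cone of the length polyhedron $Q_P$, and by Proposition \ref{CycleInequalitiesDefineLengthPolyhedron} its points are exactly the solutions of the homogeneous versions of the cycle inequalities (\ref{CYC}), namely $\sum_{i\in A}\rho_i \leq \sum_{j\in B}\rho_j$ (dropping the constant $\gamma$). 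So the first task reduces to a direct verification: I would take a fundamental extender $S = \GapSet_i \cup Z$ with $Z \subseteq \LeftBorderlineSet_i$ (the right-borderline case being symmetric), and check that its characteristic vector $\chi_S$ satisfies every homogeneous cycle inequality. Combinatorially, lengthening exactly the intervals in $S$ by a common amount $t$ corresponds to extending the gap $[i-1,i]$ and pushing apart the left- and right-parts of the representation; I would verify that this operation preserves the interval order, which shows $\CanonicalVector + t\,\chi_S \in Q_P$ for all $t \geq 0$, hence $\chi_S \in Q_P^*$.

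For the spanning direction, I would argue that any integral point $\boldsymbol{\rho}$ of $Q_P^*$ arises as the length-difference vector of some natural representation $\mathcal{R}$ relative to the canonical one, by Proposition \ref{compressing} and its reverse (gap extensions): every natural representation is reached from $\mathcal{C}$ by a sequence of gap-extension operations. Each single gap extension, which inserts a gap at some position and stretches a set of intervals crossing or touching that position, changes the length vector by precisely the characteristic vector of a fundamental extender. Thus the difference $\boldsymbol{\rho} = \rho_{\mathcal{R}} - \CanonicalVector$ decomposes as a sum of such characteristic vectors, giving the desired non-negative integral combination.

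The main obstacle I expect is the spanning direction, specifically controlling the decomposition so that the sets appearing are \emph{fundamental} extenders and not merely arbitrary extendable collections. A careless gap extension could stretch an ill-chosen family of intervals; I would need to show that the minimal, ``atomic'' extension steps---inserting a single unit gap at position $i$ and lengthening exactly the intervals whose canonical images cross or abut $[i-1,i]$ on one side---are exactly the fundamental extenders, and that a general representation decomposes into these atoms. The subtlety is that the borderline sets $\LeftBorderlineSet_i$ and $\RightBorderlineSet_i$ give a \emph{choice} of which touching intervals to include, and I must verify that every realizable length change is expressible using only the allowed combinations $\GapSet_i \cup Z$ with $Z$ contained in a single borderline set.

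An alternative, cleaner route for spanning is to work directly with the cycle-inequality description. Here I would take an arbitrary integral $\boldsymbol{\rho} \in Q_P^*$ and peel off fundamental extenders greedily: choose a position $i$ where $\boldsymbol{\rho}$ is ``active'' and subtract the largest multiple of an appropriate $\chi_{\GapSet_i \cup Z}$ that keeps $\boldsymbol{\rho}$ inside the cone, then induct on a suitable monovariant (such as $\sum_x \rho_x$ or the number of strictly positive coordinates). Whichever route is taken, the heart of the argument is the same combinatorial correspondence between gap extensions in the canonical representation and the fundamental extenders $\mathcal{E}_P$, so I would establish that correspondence as a standalone lemma before assembling the two containment directions into the Hilbert basis conclusion.
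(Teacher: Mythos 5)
Your proposal is correct and follows essentially the same route as the paper: the paper likewise identifies an integral vector of $Q_P^*$ with a natural representation of $P$ and decomposes it by repeatedly collapsing gaps (the reverse of your gap extensions, via Lemma \ref{collapse} and Proposition \ref{compressing}), each collapse subtracting a non-negative integer multiple of the characteristic vector of a fundamental extender. The standalone correspondence lemma you defer---that the set of intervals changed in an atomic step is a fundamental extender with respect to the \emph{canonical} rather than an intermediate representation---is precisely what the paper establishes inside its proof, by observing that the shortened set is the full gap set together with a right-borderline subset of the collapsed representation and that sets of this form remain of this form under all further collapses; your added membership direction (that each $\chi_S$ lies in $Q_P^*$) is sound but not required under the paper's definition of Hilbert basis, which demands only the spanning property.
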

 \begin{proof}  The vectors in  $\overrightarrow{\mathcal{E}_P}$  are characteristic vectors which are  integral.  It suffices to show that every integral vector in $Q_P^*$
 	is a non-negative integral combination of vectors from $\overrightarrow{\mathcal{E}_P}$.  To this end,
 	consider an arbitrary integral vector $\vec{v}\in Q_P^*$.  Now $\vec{v}$ corresponds to a natural 
 	interval representation $\mathcal{R}$ of $P$ in which all intervals have non-negative integer endpoints.  
 	If $\mathcal{R}$  is not the canonical representation, then
 	Theorem \ref{canon} implies there is some interval endpoint that is either not the left endpoint or not the right endpoint
 	of any interval.  By symmetry, we may suppose that $i$ is an interval endpoint in $\mathcal{R}$ that is not
 	the right endpoint of any interval.  Let $j$ be the next larger (left or right) endpoint of any interval, 
 	which must exist because there is at least one right endpoint larger than $i$.  Both $i$ and $j$ are integers, 
 	the open interval $(i,j)$ contains no endpoints, thus $[i,j]$ is a gap in $\mathcal{R}$, and no interval has a right endpoint at $i$.  

 	Lemma \ref{collapse} shows how this interval representation can be collapsed to a new interval representation $\mathcal{R}^\prime$ of $P$, 
 	essentially by contracting the interval $[i,j]$ to $[i]$.
 	Fig.\ref{ExtendingCollapsing} shades the interval $(i,j)$ because it contains no endpoints.  The eight possible
 	types of intervals (A-H) are depicted along with the resulting intervals produced upon contraction.
	
 	Firstly $\mathcal{R}^\prime$ corresponds to an integral vector in $Q_P^*$, say $\vec{v}^\prime$.
 	 Secondly, the length of intervals of type A, F, G, H does not change; furthermore, 
 	the set of elements $S$ whose intervals shorten by $j-i$ during this collapsing operation (types B-E) is a subset of the union of the entire $(h,i)$-gap in $\mathcal{R}^\prime$ and the 
 	right borderline set at $i$, where $h$ is the closest endpoint less than $i$ (see Fig.\ref{ExtendingCollapsing}).
 	Indeed sets of this type remain sets of this type upon performing such a contraction.
 	Consequently, repeated contractions to the canonical representation implies that
 	 $S$ is a subset of a union 
 	of a gap set and a
 	right borderline subset in the canonical representation; that is, $S$ is a
 	 fundamental extender.    Hence, $\vec{v}^\prime = \vec{v} - (j-i) \vec{u}$, where
 	$u \in \overrightarrow{\mathcal{E}_P}$ is the characteristic vector of $S$.   Thus repeated contractions leading to the canonical representation eventually show that 
 	$\vec{v}$ is a non-negative integral combination of vectors from $\overrightarrow{\mathcal{E}_P}$.
 \end{proof}
 
Theorem \ref{HilbertBasis} (with van der Corput's Theorem) implies that $\overrightarrow{\mathcal{E}_P}$ contains
a unique minimal Hilbert basis for $Q^*_P$.
We next turn to the task of describing this unique minimal Hilbert basis.
This reduces to determining which vectors
in $\overrightarrow{\mathcal{E}_P}$ are non-negative integral linear combinations of others.
We rephrase this condition in terms of the corresponding sets.
Let us call a subset $S \subseteq X$ a {\em Hilbert set} if $S \in \mathcal{E}_P$ and its characteristic vector is in the unique minimal Hilbert basis for $Q^*_P$.

\begin{theorem} \label{UniqueMinimalHilbertBasis}   
	 A fundamental extender $S$ is a Hilbert set if and only if $S$ is
	 not the disjoint union of proper fundamental extenders in $S$.
\end{theorem}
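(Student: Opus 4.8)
The plan is to recast the statement in terms of irreducible elements of the additive monoid $M = Q_P^* \cap \mathbb{Z}^n$. Recall the standard fact underlying van der Corput's theorem: the unique minimal integral Hilbert basis of a pointed rational cone is exactly its set of \emph{irreducible} lattice points, namely those nonzero $v \in M$ admitting no decomposition $v = v_1 + v_2$ with $v_1, v_2 \in M \setminus \{0\}$. Since an irreducible element lies in \emph{every} Hilbert basis (any basis expansion $v = \sum_i c_i h_i$ with $\sum_i c_i \geq 2$ would express $v$ as a sum of two nonzero lattice points), Theorem~\ref{HilbertBasis} forces each irreducible element to be the characteristic vector $\chi_T$ of some fundamental extender $T$. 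Hence a fundamental extender $S$ is a Hilbert set if and only if $\chi_S$ is irreducible in $M$, and it suffices to prove that $\chi_S$ is reducible if and only if $S$ is a disjoint union of proper fundamental extenders. I also record a fact used throughout: by Proposition~\ref{minCanon} together with the non-increase of lengths under collapsing (Lemma~\ref{collapse}), the apex \CanonicalVector is componentwise minimal among all length vectors, so $Q_P^*$ lies in the nonnegative orthant and every element of $M$ is a nonnegative integer vector.

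For the easy direction I would argue as follows. Suppose $S = S_1 \sqcup \dots \sqcup S_k$ with $k \geq 2$ and each $S_j$ a proper fundamental extender. Then $\chi_S = \chi_{S_1} + \dots + \chi_{S_k}$, and grouping the summands as $\chi_S = \chi_{S_1} + \chi_{S \setminus S_1}$ exhibits $\chi_S$ as a sum of two nonzero elements of $M$, the second being a sum of fundamental-extender vectors and hence a lattice point of the cone. Therefore $\chi_S$ is reducible and $S$ is not a Hilbert set.

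For the converse, assume $\chi_S$ is reducible, say $\chi_S = v_1 + v_2$ with $v_1, v_2 \in M \setminus \{0\}$. Because $\chi_S$ is a $0/1$ vector while $v_1, v_2$ are nonnegative integer vectors, each $v_i$ is itself $0/1$, and their supports $A = \operatorname{supp}(v_1)$ and $B = \operatorname{supp}(v_2)$ give a partition $S = A \sqcup B$ into two nonempty sets. The heart of the argument is to show that each such $0/1$ lattice point $\chi_A$ of the cone decomposes as a genuine partition of $A$ into fundamental extenders. Invoking Theorem~\ref{HilbertBasis}, write $\chi_A = \sum_j c_j \chi_{T_j}$ with $c_j \in \mathbb{Z}_{\geq 0}$ and the $T_j \in \mathcal{E}_P$ distinct. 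Nonnegativity of every coordinate forces $T_j \subseteq A$ whenever $c_j > 0$, and evaluating the identity at any $x \in A$ yields $\sum_{j : x \in T_j} c_j = 1$, so exactly one $T_j$ contains $x$ and its coefficient equals $1$. Thus the sets $T_j$ with $c_j > 0$ partition $A$ into fundamental extenders, and likewise $B$. Since $A, B \subsetneq S$, every one of these extenders is proper, so $S = A \sqcup B$ is a disjoint union of proper fundamental extenders, completing the proof.

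The step I expect to be the main obstacle is this final coverage argument: upgrading an arbitrary monoid decomposition of $\chi_A$ (a priori only a nonnegative integral combination of Hilbert-basis vectors) into an honest set partition of $A$. It is precisely the interaction of the $0/1$ structure of $\chi_A$ with the nonnegativity of the cone that collapses all coefficients to $1$ and forces the supporting extenders to be pairwise disjoint; confinement of $Q_P^*$ to the nonnegative orthant is essential, and without it the equation $\sum_{j : x \in T_j} c_j = 1$ would not preclude cancellation and the argument would break down.
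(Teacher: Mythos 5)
Your proof is correct, and its combinatorial engine --- the fact that a non-negative integral combination of characteristic vectors summing to a binary vector must have all coefficients equal to $1$ and pairwise disjoint supports --- is precisely the engine of the paper's own proof; but your packaging is genuinely different, and the differences are worth recording. The paper never leaves the minimal Hilbert basis: for one direction it notes that a disjoint union of Hilbert sets has characteristic vector equal to a nontrivial non-negative integral combination of basis vectors, and for the other it expands $\chi_S$ directly over the Hilbert sets and applies the binary-coefficient argument; since every vector in sight is a characteristic vector, it never needs to know that $Q_P^*$ lies in the nonnegative orthant. You instead recast ``Hilbert set'' as irreducibility in the monoid $M=Q_P^*\cap\mathbb{Z}^n$ (making explicit the van der Corput fact that the minimal basis is exactly the set of irreducible lattice points, and that every irreducible element is forced into $\overrightarrow{\mathcal{E}_P}$ --- a point the paper uses only tacitly), split $\chi_S=v_1+v_2$ into arbitrary nonzero lattice points, and only then decompose each part over $\overrightarrow{\mathcal{E}_P}$. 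This route genuinely requires $M\subseteq\mathbb{Z}_{\geq 0}^n$, which you rightly flag as essential; your citation for that fact is slightly off, though: Proposition \ref{minCanon} concerns endpoint minimality, and the componentwise minimality of the apex among length vectors follows instead from Lemma \ref{collapse} combined with Proposition \ref{compressing}, or from the Slack Theorem (Theorem \ref{slackthm}) applied with $x=y$. What your version buys in exchange for this extra supporting fact: it proves the statement literally as written, with ``proper fundamental extenders,'' whereas the paper's two directions are both phrased for Hilbert sets and leave implicit the (easy) bridge from ``disjoint union of proper fundamental extenders'' to ``not a Hilbert set''; and by decomposing over all of $\mathcal{E}_P$ rather than over the minimal basis, your argument does not need to identify in advance which fundamental extenders are Hilbert sets.
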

\begin{proof}  
	If $S$ the disjoint union of Hilbert sets, then $S$ is not a Hilbert set because its characteristic vector
	is a  non-negative integral linear combinations of characteristic vectors of the Hilbert sets in this disjoint union.
	On the other hand, consider an arbitrary  
	non-empty set $S\in \mathcal{E}_P$ that is not a Hilbert set. We must prove 
	that $S$ is the disjoint union of Hilbert sets.
	Because $S$ is not a Hilbert set, the characteristic vector of $S$, $\chi_S$, must be a
	non-negative integral linear combination of vectors from Hilbert sets, say 
	$$\chi_S = \sum_{\theta \in \Theta } \lambda_\theta \chi_{\theta},$$
	for some collection $\Theta$ of Hilbert sets and some set of positive integers $\{\lambda_\theta\}_{\theta \in \Theta }$.
	Because $\chi_S$ is a binary vector, it follows that $\lambda_\theta = 1$,
	for all $\theta \in \Theta$.  This implies the sets in $\Theta$ are disjoint.
	Therefore, $S$ is equal to the disjoint union $\bigcup_{\theta \in \Theta} \theta$.
\end{proof}

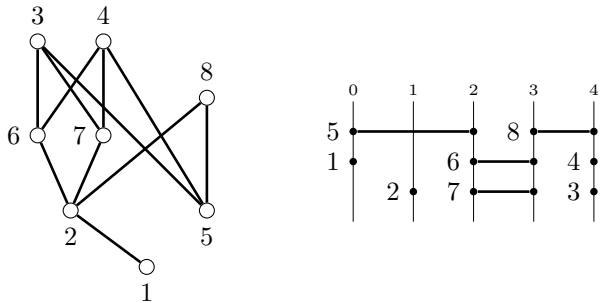
\begin{figure}[H]
	\begin{center}
	\begin{tikzpicture}[scale=0.25]
			\tikzset{vertex/.style={shape=circle, draw, inner sep = 2pt, minimum size = 5}}
			\tikzset{edge/.style = {}}
			\commentTikz{vertex 1}   \node[vertex,label=below:$1$] [fill = white,text = black, scale=\vertexScale]  (1) at (5.8,-2){};			
			\commentTikz{vertex 2}   \node[vertex,label=below:$2$] [fill = white,text = black, scale=\vertexScale]  (2) at (1.755,1){};
			\commentTikz{vertex 6}   \node[vertex,label=left:$6$] [fill = white,text = black, scale=\vertexScale]  (6) at (0,5){};
			\commentTikz{vertex 7}   \node[vertex,label=left:$7$] [fill = white,text = black, scale=\vertexScale]  (7) at (3.5,5){};
			\commentTikz{vertex 3}   \node[vertex,label=above:$3$] [fill = white,text = black, scale=\vertexScale]  (3) at (0,10){};			
			\commentTikz{vertex 4}   \node[vertex,label=above:$4$] [fill = white,text = black, scale=\vertexScale]  (4) at (3.5,10){};
			\commentTikz{vertex 8}   \node[vertex,label=above:$8$] [fill = white,text = black, scale=\vertexScale]  (8) at (9,7){};
			\commentTikz{vertex 5}   \node[vertex,label=below:$5$] [fill = white,text = black, scale=\vertexScale]  (5) at (9,1){};
			\commentTikz{arc 1 to 2}      \draw[edge] [edgeBlack, line width=.1em] 
			(1) to (2);
			\commentTikz{arc 5 to 3}      \draw[edge] [edgeBlack, line width=.1em] 
			(5) to (3);
			\commentTikz{arc 5 to 8}      \draw[edge] [edgeBlack, line width=.1em] 
			(5) to (8);
			\commentTikz{arc 2 to 8}      \draw[edge] [edgeBlack, line width=.1em] 
			(2) to (8);
			\commentTikz{arc 2 to 6}      \draw[edge] [edgeBlack, line width=.1em] 
			(2) to (6);
			\commentTikz{arc 3 to 7}      \draw[edge] [edgeBlack, line width=.1em] 
			(3) to (7);
			\commentTikz{arc 3 to 6}      \draw[edge] [edgeBlack, line width=.1em] 
			(3) to (6);
			\commentTikz{arc 4 to 5}      \draw[edge] [edgeBlack, line width=.1em] 
			(4) to (5);
			\commentTikz{arc 4 to 7}      \draw[edge] [edgeBlack, line width=.1em] 
			(4) to (7);
			\commentTikz{arc 6 to 4}      \draw[edge] [edgeBlack, line width=.1em] 
			(6) to (4);
			\commentTikz{arc 2 to 7}      \draw[edge] [edgeBlack, line width=.1em] 
			(2) to (7);
		\end{tikzpicture}
		\begin{minipage}{.1\textwidth}
		\end{minipage}\hskip1.15cm
		\begin{minipage}{.45\textwidth}
	\begin{tikzpicture}[scale=.8]
	\node () at (7,-1){};
\foreach \i in {0,...,4}{
    \draw[line width=.02em](3+\i,4)--(3+\i,6);
 \node[]()at(3+\i,6.2){\tiny{\i}};   }

\node[A,label=left:1](L) at (3,5) {};

\node[A,label=left:6](L) at (5,5) {};
\node[A](R) at (6,5) {};
\draw[line width=.1em](L)--(R); 

\node[A,label=left:7](L) at (5,4.5) {};
\node[A](R) at (6,4.5) {};
\draw[line width=.1em](L)--(R); 

\node[A,label=left:5](L) at (3,5.5) {};
\node[A](R) at (5,5.5) {};
\draw[line width=.1em](L)--(R); 

\node[A,label=left:8](L) at (6,5.5) {};
\node[A](R) at (7,5.5) {};
\draw[line width=.1em](L)--(R); 

\node[A,label=left:2]()at (4,4.5) {};
\node[A,label=left:4]()at (7,5) {};
\node[A,label=left:3]()at (7,4.5) {};
\end{tikzpicture}
		\end{minipage}
	\end{center}
\definecolor{edgeBlue}{RGB}{0,0,255}
\definecolor{edgeRed}{RGB}{255,0,0}
\vskip-3cm
\caption{\label{DPExample1} Hasse diagram and canonical representation of P=[0,1,2,2,0,2,2,3] (Example 1 from Doignon and Pauwels \cite{DoignonPauwels}).}
\label{Example1Doignon}
\end{figure}

\begin{figure}[H]
	\begin{center}
		\begin{longtable}{l|c|l|l}
			\hline \Tstrut
			\textbf{\#} & \textbf{Hilbert sets} & \textbf{gap set(s)} & \textbf{gap $\cup$ borderline subset} \\
			\hline
			
			\endfirsthead
			\multicolumn{4}{c}%
			{\tablename\ \thetable\ -- \textit{rays continued from previous page}} \\
			\hline \Tstrut
			\textbf{\#} & \textbf{Hilbert sets} & \textbf{gap set(s)} & \textbf{gap $\cup$ borderline subset} \\
			\hline
			\endhead
			\endfoot
			\hline
			\endlastfoot
			\Tstrut
			1 & \{1\} &  $ \begin{array}{c} (-\infty,0)\mbox{-gap} = \varnothing \\ \end{array}$ &  $ \begin{array}{c}\varnothing \cup \{1\} \\ \end{array}$  \Bstrut \\ \hline \Tstrut
			2 & \{3\} &  $ \begin{array}{c} (4,\infty)\mbox{-gap} = \varnothing \\ \end{array}$ &  $ \begin{array}{c}\varnothing \cup \{3\} \\ \end{array}$  \Bstrut \\ \hline \Tstrut
			3 & \{4\} &  $ \begin{array}{c} (4,\infty)\mbox{-gap} = \varnothing \\ \end{array}$ &  $ \begin{array}{c}\varnothing \cup \{4\} \\ \end{array}$  \Bstrut \\ \hline \Tstrut
			4 & \{5\} &  $ \begin{array}{c} (-\infty,0)\mbox{-gap} = \varnothing \\(0,1)\mbox{-gap} = \{5\} \\(1,2)\mbox{-gap} = \{5\} \\ \end{array}$ &  $ \begin{array}{c}\varnothing \cup \{5\} \\\{5\} \cup \varnothing \\\{5\} \cup \varnothing \\ \end{array}$  \Bstrut \\ \hline \Tstrut
			5 & \{8\} &  $ \begin{array}{c} (3,4)\mbox{-gap} = \{8\} \\(4,\infty)\mbox{-gap} = \varnothing \\ \end{array}$ &  $ \begin{array}{c}\{8\} \cup \varnothing \\\varnothing \cup \{8\} \\ \end{array}$  \Bstrut \\ \hline \Tstrut
			6 & \{2, 5\} &  $ \begin{array}{c} (0,1)\mbox{-gap} = \{5\} \\(1,2)\mbox{-gap} = \{5\} \\ \end{array}$ &  $ \begin{array}{c}\{5\} \cup \{2\} \\\{5\} \cup \{2\} \\ \end{array}$  \Bstrut \\ \hline \Tstrut
			7 & \{5, 6\} &  $ \begin{array}{c} (1,2)\mbox{-gap} = \{5\} \\ \end{array}$ &  $ \begin{array}{c}\{5\} \cup \{6\} \\ \end{array}$  \Bstrut \\ \hline \Tstrut
			8 & \{5, 7\} &  $ \begin{array}{c} (1,2)\mbox{-gap} = \{5\} \\ \end{array}$ &  $ \begin{array}{c}\{5\} \cup \{7\} \\ \end{array}$  \Bstrut \\ \hline \Tstrut
			9 & \{6, 7\} &  $ \begin{array}{c} (2,3)\mbox{-gap} = \{6, 7\} \\ \end{array}$ &  $ \begin{array}{c}\{6, 7\} \cup \varnothing \\ \end{array}$  \Bstrut \\ \hline \Tstrut
			10 & \{6, 8\} &  $ \begin{array}{c} (3,4)\mbox{-gap} = \{8\} \\ \end{array}$ &  $ \begin{array}{c}\{8\} \cup \{6\} \\ \end{array}$  \Bstrut \\ \hline \Tstrut
			11 & \{7, 8\} &  $ \begin{array}{c} (3,4)\mbox{-gap} = \{8\} \\ \end{array}$ &  $ \begin{array}{c}\{8\} \cup \{7\} \\ \end{array}$ 
		\end{longtable}
	\end{center}
   \caption{\label{AHilbertBasisExample} The unique minimal Hilbert basis of $Q^*_P$ for P=[0,1,2,2,0,2,2,3]}
\end{figure}

As an example we  took Example 1 from Doignon and Pauwels \cite{DoignonPauwels},
 the interval order $P=[0, 1, 2, 2, 0, 2, 2, 3]$. The Hasse diagram of $P$  is shown on the left of Fig.\ref{Example1Doignon},  its  canonical representation is on the right.
Table \ref{AHilbertBasisExample}  shows the $11$ Hilbert sets whose characteristic vectors form
the unique minimal Hilbert basis for the cone $Q^*_P$.  
The figure also shows the representations of these Hilbert sets as unions of gap sets and borderline subsets;
these {\it gap decompositions} witness that these sets are fundamental extenders.
As we noticed earlier, some of the Hilbert sets have multiple representations of this type.

\subsection{How to compute the minimal Hilbert basis for $Q^*_P$}

In this subsection, we address the computational complexity of computing the unique minimal Hilbert basis for the cone $Q^*_P$
of an interval order $P$.  The general complexity of computing Hilbert bases and related questions has been studied extensively, 
often revealing intractable problems even under strict conditions.   

For example,
Henk and Weismantel \cite{HenkWeismantel} proved that, given a pointed cone, one can find in polynomial time
an element of the (unique minimal) Hilbert basis that belongs to the boundary of the cone.  However, deciding whether
the Hilbert basis contains an interior point of the cone is NP-hard.

Seb\H{o}'s survey \cite{SeboSurvey} highlights that,
given two vectors $a, h \in \mathbb{Z}^n$, deciding whether $h$ is in the Hilbert basis
of the cone $\{x : ax = 0, x\geq 0\}$ is coNP-complete (Theorem 5.1 of the survey).
Additionally, Pap \cite{Pap} proved that 
deciding whether or not a set of integer vectors forms a
Hilbert basis is co-NP-complete even if the set consists of binary vectors having at most
three ones.  Seb\H{o}'s survey mentions several tractable combinatorial cases where computing 
a Hilbert basis is possible in polynomial time. 
Perfect graphs play a key role in some of these tractable instances. 

In this subsection, we demonstrate how to leverage perfect graphs to compute the unique minimal Hilbert basis for the cone  
$Q^*_P$ (Theorem \ref{HilbertExtenderGraphIsPerfect}).  A major consequence (Corollary \ref{HilbertPolySolvable}) is that, for any interval order $P=(X, \prec)$, if $\mathcal{E}_P$ has cardinality bounded by a polynomial in $|X|$, then the unique minimal Hilbert basis for $Q^*_P$
can be computed
in polynomial time.  This occurs for interval orders of bounded width, for example.
On the other hand, there are interval orders for which the unique minimal Hilbert basis for the cone  
$Q^*_P$ is exponential in the size of $P$.  We present such examples in the last  subsection.

One could restate Theorem \ref{UniqueMinimalHilbertBasis} as follows: a set $S$ is not a Hilbert set if and only if some
proper Hilbert subsets of $S$ pack perfectly into $S$.
It is well known that such set packing problems can be encoded using weighted intersection graphs.  We now introduce
this approach and describe its connection with perfect graphs.

Define the {\em Hilbert extender graph} of the interval order $P$, denoted $H_P$, as the weighted graph 
where:
\begin{itemize}
	\item each vertex $v$ of $H_P$ represents a Hilbert set $S_v$,
	\item there is an edge between vertices  $u$ and $v$ if  and only if $S_u\cap S_v\neq\varnothing$
	\item the weight of each vertex $v$ is $|S_v|$.
\end{itemize}

To determine whether a set  is not a Hilbert set, we consider all subsets of $S$ that are Hilbert sets.
Among these Hilbert sets we seek a collection of pairwise disjoint ones whose union is equal to $S$.
For $S\subset X$, let $H_P(S)$ denote the subgraph of $H_P$  induced by
the vertices representing proper subsets of $S$. A collection of pairwise disjoint Hilbert sets
corresponds to an independent set in $H_P(S)$.

Thus to test whether a set $S$ is not a Hilbert set it suffices to determine whether $H_P(S)$ contains
an independent set with weight $|S|$, which is clearly an upper bound for the maximum possible weight of an independent set in $H_P(S)$. 
Therefore,  testing whether $S$ is not a Hilbert set is essentially a {\sc maximum weighted independent set (MWIS)} problem:
for a given weighted graph,
select a subset of vertices with the maximum possible total weight, ensuring that no two selected vertices are connected by an edge.
It is well known that {\sc MWIS} is NP-complete, in general.  However, for perfect graphs 
 {\sc MWIS} it can be solved in polynomial time applying efficient linear programming algorithms, as noted by Gr\"otschel, Lov\'asz, Schrijver \cite{GLS}.  This explains how we leverage perfect graphs here.

\subsection{Hilbert extender graphs are perfect}
 In this subsection
we prove that a Hilbert extender graph $H=H_P$ is a Berge graph, that is,  $H$ contains no odd holes and no odd antiholes. Then the perfection of $H$ follows by applying the strong perfect graph theorem due to Chudnovsky et al. 
\cite{Chudnovsky} stating that Berge graphs are perfect.\\ 

To prepare the proof, for every Hilbert set  $S_v$, we fix an arbitrary gap decomposition of the form $S_v=G_i\cup Z$, where $G_i$ is the $i$th gap set  and $Z$ is a borderline subset of the $(i-1,i)$-gap, $0\leq i\leq m$. Furthermore, we  introduce a natural ordering of gap sets, we write $G_i < G_j$ if and only of $i<j$. 

\begin{lemma}
\label{3path}
If $(a,v,b)$ is an induced 3-path in $H$, and $S_v=G_i\cup Z$, then $G_i\neq\varnothing$.
\end{lemma}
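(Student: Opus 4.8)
The plan is to prove the contrapositive: assuming the fixed gap decomposition of $S_v$ has $G_i=\varnothing$, I will show that $v$ cannot be the center of an induced $3$-path, because all of its neighbors in $H$ turn out to share a single common element. So suppose $G_i=\varnothing$. Since $G_i$ is the $i$th gap set, the decomposition $S_v=G_i\cup Z$ collapses to $S_v=Z$, where $Z$ is a borderline subset; that is, $S_v\subseteq\mathcal{L}_i$ or $S_v\subseteq\mathcal{R}_i$.

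The crucial observation is that, with $G_i=\varnothing$, every singleton $\{z\}$ with $z\in S_v$ is itself a fundamental extender. Indeed, in the case $S_v\subseteq\mathcal{R}_i$, taking the same index $i$ together with the singleton borderline subset $\{z\}\subseteq\mathcal{R}_i$ gives $G_i\cup\{z\}=\varnothing\cup\{z\}=\{z\}$, which meets the definition of a fundamental extender (the case $S_v\subseteq\mathcal{L}_i$ is identical, using $\{z\}\subseteq\mathcal{L}_i$).

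Next I would invoke the characterization of Hilbert sets. If $|S_v|\geq 2$, then $S_v$ is the disjoint union of the proper fundamental extenders $\{z\}$ over $z\in S_v$, and Theorem \ref{UniqueMinimalHilbertBasis} then forces $S_v$ to not be a Hilbert set. This contradicts the fact that $S_v$ labels a vertex of $H_P$, which by construction represents a Hilbert set. Hence $|S_v|=1$; write $S_v=\{z\}$.

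Finally, the induced path is dispatched directly. The two edges $av$ and $vb$ of the $3$-path record the nonempty intersections $S_a\cap S_v\neq\varnothing$ and $S_b\cap S_v\neq\varnothing$; since $S_v=\{z\}$, this forces $z\in S_a$ and $z\in S_b$, so $z\in S_a\cap S_b$ and therefore $a$ is adjacent to $b$ in $H$. This contradicts $(a,v,b)$ being an induced $3$-path, completing the contrapositive and hence the lemma. I expect the only substantive step to be the second one: recognizing that an empty gap set splits the extender into singletons, which, through the Hilbert-set criterion of Theorem \ref{UniqueMinimalHilbertBasis}, pins $S_v$ down to a single element; everything after that is immediate.
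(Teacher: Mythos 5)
Your proof is correct and follows essentially the same route as the paper's: both reduce the case $G_i=\varnothing$ to showing $|S_v|=1$ (the paper compresses your Theorem~\ref{UniqueMinimalHilbertBasis} step into the remark that singletons of $Z$ are fundamental extenders, so no larger subset of $Z$ can be in the minimal Hilbert basis), and then both derive the contradiction $S_a\cap S_b\neq\varnothing$ from the two edges of the induced $3$-path. Your write-up merely makes explicit the disjoint-union-of-singletons justification that the paper leaves implicit.
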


\begin{proof}
If $G_i=\varnothing$, then every single-element subset of $Z$ is a fundamental extender, and so no subset of $Z$ of size at least two is in the minimal Hilbert basis. Hence $|S_v|=|Z|=1$. Because $a$ and $v$ are adjacent, $S_a\cap S_v\neq\varnothing$, hence $S_a\cap S_v=Z$; similarly, $S_b\cap S_v=Z$.  Consequently, $ S_a\cap S_b\neq\varnothing$, contradicting that $ab$ is not an edge of $(a,v,b)$.
\end{proof}

The gap decomposition of the fundamental extenders yield a natural coloring, called
a {\it gap coloring},
of the vertices of $H$: if $S_v=G_i\cup Z$, then we color $v$ with $i$. Observe that 
if  $a$ and $b$ are colored with the same color $j$ and 
$G_j\neq\varnothing$, then $S_a\cap S_b\neq\varnothing$, thus $ab\in H$. In other words, for any subgraph $C\subset H$
such that the gap coloring of its vertices uses non-empty gap sets, the gap coloring
of $C$ is a proper coloring for $\overline{C}$, the complement of $C$. 

Observe that every vertex of a hole or an antihole  
is the midpoint of an induced 3-path; thus by Lemma \ref{3path}, no gaps are empty in the gap decomposition of their vertices.  Therefore, a gap coloring of the Hilbert extender graph $H$ is proper for the antihole and hole subgraphs of $H$.
\begin{proposition}
\label{2colors}
If $C\subseteq H$ is a $k$-hole, $k\geq 4$, then the gap coloring uses exactly two colors on $C$. In particular,  a Hilbert extender graph contains no  $k$-hole with $k\geq 5$.
\end{proposition}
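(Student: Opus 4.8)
The plan is to distill from the geometry of canonical intervals a single \emph{betweenness} principle for the gap coloring, and then run a short combinatorial argument on the cyclic order of the hole. First I would record a structural fact about an individual Hilbert set: if $S_v=\GapSet_i\cup Z$ in its fixed gap decomposition, then all intervals of $S_v$ pass through one common point $q_v$, where $q_v=i-1$ when $Z\subseteq\LeftBorderlineSet_i$ and $q_v=i$ when $Z\subseteq\RightBorderlineSet_i$ (either value serves if $Z=\varnothing$). Indeed the gap-set members cross $[i-1,i]$, the left-borderline members have right endpoint $i-1$, and the right-borderline members have left endpoint $i$, so each contains $q_v$; in particular $q_v\in\{i-1,i\}$, hence $q_v\le i$ and $q_v\ge i-1$.

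The key lemma I would then establish is: if $u,v,w$ are vertices of $H$ with colors $c_u<c_v<c_w$ and $u\sim w$, then $u\sim v$ and $v\sim w$. To prove it, take $x\in S_u\cap S_w$. Since $x\in S_u$ it contains $q_u\le c_u$, and since $x\in S_w$ it contains $q_w\ge c_w-1$; therefore $\ell_x\le c_u\le c_v-1$ and $r_x\ge c_w-1\ge c_v$. By the definition of the gap set this says exactly that $x\in\GapSet_{c_v}\subseteq S_v$, so $x$ lies in both $S_u\cap S_v$ and $S_v\cap S_w$, yielding the two claimed edges. I expect this lemma to be the main obstacle, as it is the step that converts interval data into a purely color-ordered condition; the delicate part is reading off $\ell_x\le c_u$ and $r_x\ge c_w-1$ from the common points (equivalently, via the short case analysis of how two Hilbert sets of different colors can share an element).

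From the key lemma I would derive a \emph{no-skipping} property of hole edges: if $v_sv_{s+1}$ is an edge of $C$ with $c_{v_s}<c_{v_{s+1}}$, then no hole vertex has color strictly between them, for otherwise such a vertex $v_r$ would be adjacent to both $v_s$ and $v_{s+1}$ by the key lemma, creating a triangle in the chordless cycle $C$, impossible for $k\ge4$. Listing the distinct colors of $C$ in increasing order as \emph{levels} $1,\dots,p$, this means consecutive cycle vertices differ in level by at most one. Separately, the adjacency observation preceding the proposition (two vertices sharing a color with a non-empty gap must be adjacent), which applies because Lemma~\ref{3path} guarantees every hole vertex has a non-empty gap set, shows each color class is a clique; since $C$ is triangle-free, every color class is a single vertex or a pair of consecutive vertices. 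In particular $C$ is not monochromatic, so $p\ge2$.

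Finally I would close with a flanking argument. Let $B$ be the set of top-color (level $p$) vertices; being a clique in the triangle-free cycle, it is one or two consecutive vertices. Let $x$ and $y$ be the vertices immediately preceding and following $B$ on $C$. If $x=y$ then $C$ has at most $|B|+1\le3$ vertices, contradicting $k\ge4$, so $x\ne y$. Neither lies in $B$, so by no-skipping both have level $p-1$, hence the same color, hence (by the adjacency observation) $x\sim y$; thus $x$ and $y$ are consecutive on $C$. Tracing the cyclic order $x,B,y$ together with the edge $y\sim x$ forces $C$ to consist of exactly these vertices, so $k=|B|+2$; with $|B|\le2$ and $k\ge4$ this gives $k=4$, $|B|=2$, and all four vertices colored using only levels $p-1$ and $p$. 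Hence exactly two colors occur and $p=2$, which is the first assertion. The ``in particular'' statement then follows at once: any $k$-hole with $k\ge4$ has just two color classes, each a clique of size at most two in a triangle-free cycle, so $k\le4$, and no $k$-hole with $k\ge5$ can exist.
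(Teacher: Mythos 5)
Your proof is correct and follows essentially the same route as the paper's: both arguments combine (i) the fact that each color class on the hole is a clique (via Lemma \ref{3path} and the adjacency observation), hence consists of at most two consecutive vertices, with (ii) the no-skipping property, which the paper proves exactly as your key lemma does---a canonical interval in $S_a\cap S_{a+1}$ must contain every intermediate gap, hence lies in every intermediate gap set, creating a forbidden triangle. Your common-point observation and explicit flanking endgame merely supply careful detail at the two points where the paper is terse (the containment claim, and the closing step ``this argument applies cyclically, there can be at most two distinct colors'').
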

\begin{proof}  
Let $C=(1,\ldots,k)$ be a $k$-hole in $H$ ($k\geq 4$).
As noted above, the gap coloring properly colors $\overline{C}$, and since  $\overline{C}$ has no independent set of size three, every color appears at most two times. If a color appears twice, then it appears at  consecutive vertices on $C$.

Consider consecutive vertices $a , a+1$ on $C$ (cyclically), and let $S_{a}=G_i\cup Z_i$ and $S_{a+1}=G_j\cup Z_j$, with $i\neq j$.  We may assume that $i<j$ (by selecting appropriate orientation of $C$). Observe that no vertex $b$ exists in $C$ with $S_b=G_h\cup Z_h$, where
 $i< h< j$. To see this, take a canonical interval $J\in S_a\cap S_{a+1}$; since $J$ contains the $(h-1,h)$-gap, we have $J\in G_h$ implying $J\in S_a\cap S_b\cap S_{a+1}$. Therefore, $(a, b, a+1)$ is a triangle in $C$,  contradicting that $C$ is a $k$-hole with $k\geq 4$.
 
Because there is no color $h$ between $i$ and $j$, and this argument applies cyclically, there can be at most two  distinct colors, each used at most twice, thus   $k\leq 4$  follows. 
\end{proof}  
\begin{proposition}
\label{antiholes} 
The Hilbert extender graph contains no odd antihole of size $n\geq 7$.
\end{proposition}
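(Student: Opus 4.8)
The plan is to combine the gap coloring with a betweenness property of intersecting fundamental extenders and then extract a parity contradiction from the odd length. Suppose for contradiction that $H$ contains an antihole $A$ on $n\ge 7$ vertices with $n$ odd. The complement of $A$ is an induced $n$-cycle; I would list its vertices as $v_0,v_1,\ldots,v_{n-1}$ (indices mod $n$) in cyclic order, so that $S_{v_i}\cap S_{v_{i+1}}=\varnothing$ for consecutive indices while $S_{v_i}\cap S_{v_j}\ne\varnothing$ whenever $v_i,v_j$ are non-consecutive. As noted before Proposition \ref{2colors}, every vertex of an antihole is the midpoint of an induced $3$-path, so Lemma \ref{3path} guarantees that none of the gap sets in the chosen decompositions $S_{v_i}=G_{c_i}\cup Z_i$ is empty, where $c_i$ denotes the gap color of $v_i$. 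Since a shared color on non-empty gap sets forces an intersection, consecutive vertices carry distinct colors, i.e. $c_i\ne c_{i+1}$ for every $i$.

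The key tool will be a betweenness property, established by the same interval-spanning argument used in the proof of Proposition \ref{2colors}: if $uv$ is an edge of $H$ with $c_u<c_v$, then $S_u\cap S_v$ contains a canonical interval $J$ lying in $G_h$ for every $h$ with $c_u<h<c_v$. Consequently, for any vertex $w$ with $c_u<c_w<c_v$ one has $J\in G_{c_w}\subseteq S_w$, so $J$ witnesses $S_u\cap S_w\ne\varnothing$ and $S_v\cap S_w\ne\varnothing$; that is, $w$ is adjacent in $H$ to both $u$ and $v$. I expect the verification of this betweenness property to be the main technical obstacle, since it requires checking that the witness $J$ genuinely spans the intermediate gaps in each configuration of the gap decomposition (in particular ruling out the purely borderline cases, where $J$ could have a degenerate span); this is exactly the content already exploited in Proposition \ref{2colors}.

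With the betweenness property in hand, I would apply it to the distance-two triples of the cycle. For each $i$, the vertices $v_i$ and $v_{i+2}$ are non-consecutive (as $n\ge 5$), hence $v_iv_{i+2}$ is an edge of $H$, while $v_{i+1}$ is consecutive to both $v_i$ and $v_{i+2}$ and is therefore non-adjacent to both. The betweenness property then forbids $c_{i+1}$ from lying strictly between $c_i$ and $c_{i+2}$, for otherwise $v_{i+1}$ would be adjacent to $v_i$, contradicting $S_{v_i}\cap S_{v_{i+1}}=\varnothing$. Writing $s_i:=\operatorname{sgn}(c_{i+1}-c_i)\in\{+,-\}$ (well defined because consecutive colors differ), a short check of the cases shows that the excluded betweenness at position $i$ is precisely the statement $s_{i+1}=-s_i$. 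Hence the cyclic sign sequence $s_0,s_1,\ldots,s_{n-1}$ must strictly alternate, which is impossible for a cycle of odd length $n$. This contradiction would complete the argument and show that $H$ has no odd antihole of size $n\ge 7$.
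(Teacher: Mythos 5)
Your proof is correct, but it follows a genuinely different route from the paper's. The paper proves Proposition \ref{antiholes} by invoking the \emph{statement} of Proposition \ref{2colors} as a black box: it takes a proper coloring of the complement cycle $C$, and in two cases (a third color appearing on the sub-path $L=(4,\ldots,n-1)$, or $L$ being $2$-colored, where oddness forces equal colors at the ends of $L$) it exhibits an explicit $4$-hole inside the antihole that receives at least three colors, contradicting Proposition \ref{2colors}. You instead bypass that statement and re-derive the \emph{inner argument} of its proof --- the interval-spanning fact, which you package as a betweenness lemma: for any edge $uv$ of $H$, a witness $J\in S_u\cap S_v$ lies in $G_h$ for all $h$ strictly between $c_u$ and $c_v$, so any vertex colored strictly between is adjacent to both ends. (Your worry about degenerate borderline cases is unfounded: membership in $S_u$ forces $\ell_J\le c_u$ and membership in $S_v$ forces $r_J\ge c_v-1$ in every case of the decomposition, which is exactly what the span requires; the genuinely degenerate combinations, such as $r_J=c_u-1$ together with $\ell_J=c_v$, are simply infeasible.) Applying this to the triples $(v_i,v_{i+1},v_{i+2})$ yields that consecutive color increments must alternate in sign, which is impossible around an odd cycle. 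What each approach buys: the paper's is shorter once Proposition \ref{2colors} is available, while yours needs no case analysis, works uniformly for all odd $n\ge 5$, and avoids a small awkwardness in the paper's write-up (it speaks of an ``arbitrary proper coloring, not necessarily a gap coloring,'' yet the contradiction with Proposition \ref{2colors} only makes sense for the gap coloring itself --- you work with the gap coloring throughout, so no such mismatch arises). Your betweenness lemma also isolates a reusable structural fact about gap colorings that the paper leaves implicit.
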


\begin{proof}  Let $H$ be a Hilbert extender graph, and let $A\subset H$ be an antihole of odd size $n\geq 7$. 
The complement $\overline{A}$ is an $n$-cycle $C=(1,2,\ldots,n)$ 
 (depicted with dotted lines in the diagrams).
Consider an arbitrary proper coloring of $C$ (not necessarily a gap coloring). Let $1$ and $2$ be colored red and cyan respectively, and let $L=(4,\ldots,{n-1})$. 

Assume first that there is a vertex $j\in L$ having a color different from red and cyan.
Then $1,2,j$ plus a neighbor of $j$ in $L$ 
 induce a 4-hole in $A$  colored with  more than two colors contradicting Proposition \ref{2colors} (see the left figure). 

 \begin{center}
\begin{tikzpicture}
\node[A,label=below:{\small${n-1}$}](a0)at (2,0){};

\node[X,label=below:{\small${j+1}$}](a1)at (3.25,0){};
\node[G,label=below:{\small${j}$}](b)at (4.25,0){};
\node[](a)at (5.25,0){};

 \node[A,label=left:{${n}$}](b1)at (2.25,.75 ){};
  \node[W,label=above:{\small${1}$}](a2)at (3.5,1.25){};
\node[B,label=above:{\small${2}$}](b2)at (4.5,1.25 ){};

\node[A,label=below:{\small${4}$}](b3)at (5.75,0){};
 \node[A,label=right:{\small${3}$}](a3)at (5.5,.75 ){};
 
\draw[line width=.1em,dotted](a)--(b3); 
    \draw[line width=.1em,dotted](a2)--(b2); 
      \draw[line width=.1em,dotted](b)--(a1)(2.5,0)--(a0)--(b1) ; 
      
    \draw[line width=.1em,dotted](a3)--(b3); 
     \draw[line width=.1em,dotted](4.75,0)--(b); 
           \draw[line width=.1em,dotted](a3)--(b2); 
               \draw[line width=.1em,dotted](a2)--(b1); 
               \draw[line width=.075em]  (b2)--(b)--(a2)--(a1)--(b2);(a1)--(a3)--(b);     
        \node()at (5.1,-.25){L};               
  \end{tikzpicture}
  \hskip1cm
  \begin{tikzpicture}
\node[W,label=below:{\small${n-1}$}](a1)at (2,0){};
\node[](a)at (5.25,0){};
\node[A,label=left:{${n}$}](b1)at (2.25,.75 ){};
 
  \node[W,label=above:{\small${1}$}](a2)at (3.5,1.25){};
\node[B,label=above:{\small${2}$}](b2)at (4.5,1.25 ){};
\node[W,label=below:{\small${4}$}](b3)at (5.75,0){};     
\node[G,label=right:{\small${3}$}](a3)at (5.5,.75 ){};
\node[B,label=below:{\small${n-2}$}](b)at (3.15,0){}; 
    \draw[line width=.1em,dotted](3.75,0)--(b); 
    
\draw[line width=.1em,dotted](a)--(b3); 
    \draw[line width=.1em,dotted](a2)--(b2); 
      \draw[line width=.1em,dotted](b)--(a1)--(b1);

     \draw[line width=.1em,dotted](a2)--(b1); 
           \draw[line width=.1em,dotted](b3)--(a3)--(b2); 
               \draw[line width=.1em,dotted](a2)--(b1); 
               \draw[line width=.075em](b2)--(b)(a1)--(b2)(a1)--(a3)--(b);     
         \node()at (4.75,-.25){L};                
  \end{tikzpicture}
\end{center}

We assume next that $L$ is properly colored with red and cyan. Because $L$ has an odd number of vertices, the end vertices $4$ and ${n-1}$ obtain the same color, say red. In this case
vertex $3$ needs a third color and $(2,{n-1},3,{n-2})$ is a three colored 4-hole
(see the right figure). Similarly, if both end vertices of $L$ are cyan then $(1,{4},n,5)$ is a  4-hole colored with three colors.
This contradicts Propositions \ref{2colors} and concludes the proof that $H$ has no odd antihole of size $n\geq 7$.
\end{proof}

\begin{theorem} \label{HilbertExtenderGraphIsPerfect} 
	The Hilbert extender graph of an interval order is 
	a perfect graph.
\end{theorem}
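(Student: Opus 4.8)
The plan is to invoke the strong perfect graph theorem of Chudnovsky et al.\ \cite{Chudnovsky}, which reduces the claim to showing that the Hilbert extender graph $H=H_P$ is a Berge graph, that is, that it contains neither an odd hole nor an odd antihole. The two preceding propositions already supply essentially all of the combinatorial content, so the remaining work is to assemble them and to check the small boundary cases that each proposition leaves open.

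First I would dispose of odd holes. An odd hole is an induced cycle of odd length at least $5$, and Proposition \ref{2colors} asserts that $H$ contains no $k$-hole with $k\geq 5$. Hence $H$ has no odd hole whatsoever. Next I would rule out odd antiholes. Proposition \ref{antiholes} already excludes every odd antihole of size $n\geq 7$, so only the antihole of size $5$ remains to be handled. Here I would exploit the self-complementary identity $\overline{C_5}=C_5$: a $5$-antihole is by definition the complement of an induced $5$-cycle, which is again an induced $5$-cycle, hence a $5$-hole in $H$. Since Proposition \ref{2colors} already forbids $5$-holes, $H$ contains no $5$-antihole either. Combining the two ranges, $H$ has no odd antihole.

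Having established that $H$ contains no odd hole and no odd antihole, $H$ is a Berge graph, and the strong perfect graph theorem \cite{Chudnovsky} immediately yields that $H$ is perfect. The only genuinely delicate point in this assembly is the treatment of the size-$5$ antihole: it falls outside the scope of Proposition \ref{antiholes}, which is stated only for $n\geq 7$, and must instead be absorbed into the hole analysis through the observation $\overline{C_5}=C_5$. Everything else is routine bookkeeping of exactly which cases the two preceding propositions cover, so I do not expect any substantive obstacle here—the real technical weight of the result already resides in Propositions \ref{2colors} and \ref{antiholes}.
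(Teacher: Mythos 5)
Your proof is correct and takes essentially the same route as the paper: combine Proposition \ref{2colors} (no $k$-holes with $k\geq 5$) with Proposition \ref{antiholes} (no odd antiholes of size $\geq 7$) and invoke the strong perfect graph theorem \cite{Chudnovsky}. Your explicit treatment of the $5$-antihole via $\overline{C_5}=C_5$ is in fact slightly more careful than the paper's own proof, which leaves that boundary case implicit.
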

\begin{proof}
A Hilbert extender graph $H$ is a so called Berge graph, because it contains neither odd hole (Proposition \ref{2colors}) nor odd antihole (Proposition \ref{antiholes}).  Berge graphs are perfect due to the strong perfect graph theorem \cite{Chudnovsky}, thus $H$ is a Berge graph.
\end{proof}

Because 
induced subgraphs of a Hilbert extender graph $H=H_P$ are also perfect, 
testing whether a fundamental extender $S \in \mathcal{E}_P$ is a Hilbert set is polynomial-time solvable
provided that $H_P(S)$ is a graph whose order is bounded by a polynomial in $|X|$.

\begin{corollary} \label{HilbertPolySolvable} Suppose that $P=(X, \prec)$ is an interval order.  If
	$\mathcal{E}_P$ has cardinality bounded by a polynomial in $|X|$, then the unique minimal Hilbert basis for the pointed cone $Q^*_P$
	can be computed in polynomial time.
\end{corollary}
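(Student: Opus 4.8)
The plan is to give a constructive polynomial-time algorithm that first produces the collection $\mathcal{E}_P$ of all fundamental extenders and then, working upward by cardinality, decides for each $S \in \mathcal{E}_P$ whether it is a Hilbert set, using the maximum weighted independent set (MWIS) reformulation on the Hilbert extender graph. By Theorem \ref{HilbertBasis} the characteristic vectors of $\mathcal{E}_P$ form an integral Hilbert basis for $Q^*_P$, and by van der Corput's theorem the Hilbert sets among them are exactly the unique minimal Hilbert basis; so correctly identifying the Hilbert sets is precisely the goal.

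First I would argue that the hypothesis $|\mathcal{E}_P| = |X|^{O(1)}$ forces every borderline set to be small, and hence that $\mathcal{E}_P$ can actually be listed in polynomial time. Since the canonical representation is computable in polynomial time, the gap sets $\GapSet_i$ and the borderline sets $\LeftBorderlineSet_i, \RightBorderlineSet_i$ ($0 \le i \le m$, with $m \le |X|$) are available. For a fixed $i$, the sets $\GapSet_i \cup Z$ with $Z \subseteq \RightBorderlineSet_i$ are pairwise distinct (no element of $\RightBorderlineSet_i$ lies in $\GapSet_i$, since $\ell_x = i$ precludes $\ell_x \le i-1$), so they contribute at least $2^{|\RightBorderlineSet_i|}-1$ distinct nonempty members of $\mathcal{E}_P$, and likewise for $\LeftBorderlineSet_i$. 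Thus $|\RightBorderlineSet_i|, |\LeftBorderlineSet_i| = O(\log |X|)$, and enumerating, over all $O(|X|)$ gaps, every subset of each borderline set costs only $|X|^{O(1)}$. This produces $\mathcal{E}_P$ together with a gap decomposition witnessing each member.

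Next I would process the members of $\mathcal{E}_P$ in nondecreasing order of cardinality, maintaining the Hilbert extender graph $H_P$ on the Hilbert sets discovered so far. By Theorem \ref{UniqueMinimalHilbertBasis}, together with its recursive consequence that a non-Hilbert fundamental extender is a disjoint union of Hilbert sets, a set $S \in \mathcal{E}_P$ fails to be a Hilbert set exactly when some pairwise disjoint Hilbert subsets of $S$ cover all of $S$; every such subset is a proper subset of $S$, hence strictly smaller, hence already classified. Forming $H_P(S)$, the induced subgraph of the current $H_P$ on vertices whose sets are proper subsets of $S$, with vertex weights $|S_v|$, an independent set corresponds to pairwise disjoint Hilbert subsets of $S$, and its weight (at most $|S|$) equals $|S|$ iff those subsets partition $S$. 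So $S$ is a Hilbert set iff the maximum weight of an independent set in $H_P(S)$ is strictly less than $|S|$; if so, I add $S$ as a vertex of $H_P$.

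The per-step cost is dominated by this MWIS computation. Since $H_P$ is perfect by Theorem \ref{HilbertExtenderGraphIsPerfect}, every induced subgraph $H_P(S)$ is perfect, and MWIS on a perfect graph is polynomial-time solvable by the Gr\"otschel--Lov\'asz--Schrijver machinery \cite{GLS}. With $|\mathcal{E}_P| = |X|^{O(1)}$ iterations, each running one polynomial MWIS and an easy update of $H_P$, the whole procedure runs in polynomial time, and the accepted vertices are exactly the unique minimal Hilbert basis. The only genuinely delicate point is the correctness of the bottom-up ordering, namely that restricting the packing test to \emph{already-certified} Hilbert subsets loses nothing; this is precisely what the recursive form of Theorem \ref{UniqueMinimalHilbertBasis} supplies, so everything else is bookkeeping once perfection of $H_P$ is in hand.
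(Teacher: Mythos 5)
Your proposal is correct and takes essentially the same route as the paper's own proof: build the Hilbert extender graph bottom-up by cardinality, deciding each fundamental extender $S$ via a maximum weighted independent set computation on the induced subgraph $H_P(S)$, which is tractable because $H_P$ is perfect (Theorem \ref{HilbertExtenderGraphIsPerfect}) and MWIS on perfect graphs is polynomial by Gr\"otschel--Lov\'asz--Schrijver. The only genuine addition is your opening step showing that $|\mathcal{E}_P|$ being polynomial forces each borderline set to have size $O(\log |X|)$, so that $\mathcal{E}_P$ itself can be enumerated in polynomial time---a point the paper's proof leaves implicit---which fills in, rather than departs from, its argument.
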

\begin{proof}  The Hilbert extender graph $H_P$ can be built in stages $k=1,2,\ldots $ with stage $k$ adding vertices with weight $k$.
Determining whether a vertex of weight $k$ is
added to $H_P$ only requires knowledge of the vertices with smaller weight as Theorem \ref{HilbertExtenderGraphIsPerfect} shows.
If $\mathcal{E}_P$ has cardinality bounded by a polynomial in $|X|$,
it follows that the number of vertices of weight $k$ is similarly bounded.
Consequently,
$H_P$ is a graph whose order is bounded by a polynomial in $|X|$ and it can be constructed  
in polynomial time.  This graph encodes the unique minimal Hilbert basis for the pointed cone $Q^*_P$, as desired.
\end{proof}

\definecolor{edgeBlack}{RGB}{0,0,0}

\begin{center}
\begin{figure}[htp]
	\begin{center}
		\begin{tikzpicture}[scale=0.3]
			\tikzset{vertex/.style={shape=circle, draw, inner sep = 2pt, minimum size = 5}}
			\tikzset{edge/.style = {}}
			\commentTikz{vertex 1}   \node[vertex,label=below:{\small$1$}] [fill = white,text = black, scale=\vertexScale]  (1) at (2.613636363000,0.001000000000){\normalsize{}};
			\commentTikz{vertex 2}   \node[vertex,label=left:{\small$2$}] [fill = white,text = black, scale=\vertexScale]  (2) at (-.3,5.000000000000){\normalsize{}};
			\commentTikz{vertex 3}   \node[vertex,label=left:{\small$3$}] [fill = white,text = black, scale=\vertexScale]  (3) at (2.500000000000,5.000000000000){\normalsize{}};
			\commentTikz{vertex 4}   \node[] [fill = white,text = black, scale=\vertexScale]  (4) at (4.999999999000,5.000000000000){\normalsize{$\cdots$}};
			\commentTikz{vertex 5}   \node[vertex,label=right:{\small$n-3$}] [fill = white,text = black, scale=\vertexScale]  (5) at (6.8,5.000000000000){};
			\commentTikz{vertex 6}   \node[vertex,label=right:{\small$n-2$}] [fill = white,text = black, scale=\vertexScale]  (6) at (11,5.000000000000){};
			\commentTikz{vertex 7}   \node[vertex,label=above: {\small$n-1$}] [fill = white,text = black, scale=\vertexScale]  (7) at (6.63636363000,10.000000000000){};
			\commentTikz{vertex 8}   \node[vertex,label=below:{$n$}] [fill = white,text = black, scale=\vertexScale]  (8) at (7.840909091000,0.001000000000){\normalsize{}};
			\commentTikz{arc 1 to 2}      \draw[edge] [edgeBlack, line width=.1em] 
			(1) to (2);
			\commentTikz{arc 1 to 3}      \draw[edge] [edgeBlack, line width=.1em] 
			(1) to (3);
			\commentTikz{arc 1 to 5}      \draw[edge] [edgeBlack, line width=.1em] 
			(1) to (5);
			\commentTikz{arc 1 to 6}      \draw[edge] [edgeBlack, line width=.1em] 
			(1) to (6);
			\commentTikz{arc 2 to 7}      \draw[edge] [edgeBlack, line width=.1em] 
			(2) to (7);
			\commentTikz{arc 3 to 7}      \draw[edge] [edgeBlack, line width=.1em] 
			(3) to (7);
			\commentTikz{arc 5 to 7}      \draw[edge] [edgeBlack, line width=.1em] 
			(5) to (7);
			\commentTikz{arc 6 to 7}      \draw[edge] [edgeBlack, line width=.1em] 
			(6) to (7);
		\end{tikzpicture}
	\end{center}
	\caption{\label{ExpHilbertBasis}  Hasse diagram of a family of interval orders with exponential-sized Hilbert basis.}
\end{figure}
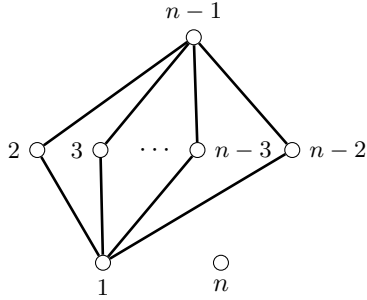 
\end{center}

\subsection{Exponential-sized Hilbert basis}

The number of vectors in the Hilbert basis may become
exponential in the size of the interval order as demonstrated by
the interval order whose Hasse diagram is shown in Figure \ref{ExpHilbertBasis}.
This interval order, call it $P$, has  
Hilbert basis for the cone $Q^*_P$ consisting of the $2^{n-3} + 2$ vectors corresponding to these Hilbert sets:
$\{1\}, \{n-1\}$ and $\{n\} \cup S$, for any subset  $S \subseteq \{2,\ldots,n-2\}$. 
This interval order has twins (pairs of incomparable elements that are comparable to the same sets of other elements).  It is worth noting that  twins in the example are not necessary to use;
it is easy to modify this construction to obtain a twin-free example
exhibiting an exponential-sized Hilbert basis.

\section{Conclusions}
\label{Conclusionsection}

Our motivation to study the length polyhedron of an interval order arises from an attempt to attack $k$-count problems, questions about interval representations using at most $k$ distinct interval lengths. 
A systematic study of the $k$-count representations for interval orders 
was initiated by Fishburn a half a century ago \cite{FishburnBook}. Since then several remarkable results were obtained by Isaak \cite{Isaak1993,Isaak2009} with several collaborators 
\cite{BoyadzhiyskaDissertation,Gordon,BGT}; and also by other groups with  
Oliviera and  Szwarcfiter \cite{FMOS,JLORS}. 
Nevertheless, 
the $k$-count problems are widely open in general.
Investigating the polyhedra attached to interval orders, in particular, focusing on the length polyhedron, may help understand the 
difficulty of these questions. 

The solution of the $k$-count problem for $k=1$ is widely known.
An interval order defined by a family of compact intervals of the same length is called a {\em semiorder}. The poset {\bf 3 + 1} has  four elements, $\{a,b,c,d\}$, and the 3-chain  $a\prec b\prec c$. 
Scott and Suppes \cite{ScottSuppes} obtained that   an interval order is a {\em semiorder} if and only if it contains  no  {\bf 3~+~1} as a subposet. The characterization of interval orders defined by a family of intervals of two lengths, short and tall, is not known.  Some progress is reported in this direction separately for  the 2-count problem of interval orders containing no {\bf 4~+~1} as a subposet \cite{BiKeLe}.

 We proposed in \cite{BiKeLe} the k-count problem  for the natural interval representation of permutations. 
The  depth of a permutation $\pi$ is defined as the length of the largest decreasing subsequence of $\pi$.  A representation  of $\pi$ is a family of intervals $ [\ell_j,r_j]$, $j=1,\ldots n$, with distinct  integral endpoints such that
 $
\ell_{\pi(1)}<\ldots< \ell_{\pi(n)}<r_1< \ldots<r_n$. 
The representation of a permutation is {$k$-count} if  the number of distinct interval lengths is $k$. Obviously, an interval representation of a depth-k permutation uses at least $k$ distinct interval lengths. 
We solved the case $k=2$ by proving the duality  that  depth-2 permutations have 2-count interval representations. Examples  show that no similar perfection remains true in general; there exist depth-3 permutations admitting no 3-count interval representation. In \cite{dodgy} there are initial results for the case $k=3$, addressing the problem of characterizing permutation having $3$-count interval representation.

The key graph of an interval order is a colored directed graph whose cycles describe the length polyhedron of the interval order. Several properties of these graphs are applied when determining the Schrijver system of the length polyhedron \cite{KeLe}. The information held by a key graph is equivalent with the information contained by the canonical representation of an interval order. In that sense a key graph is the link between the interval order and its length polyhedron. Thus recognizing key graphs seems to be a relevant question at the crossing of interval systems and polyhedral combinatorics. A possible characterization is presented in \cite{keygraph}.

  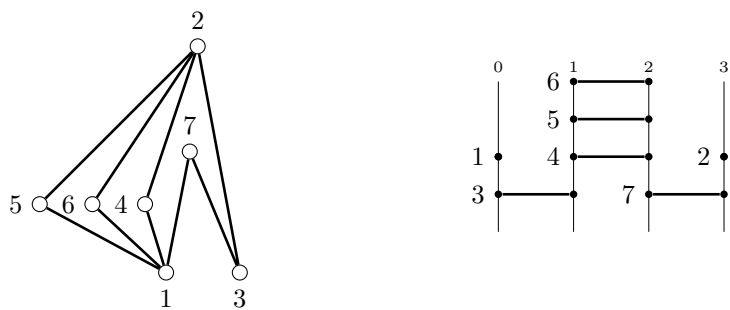
\begin{figure}[htp]
\begin{center}
\begin{tikzpicture}[scale=.7] 

\node[X,label=left:$4$](1) at (.5,3) {};
\node[X,label=left:$6$](3) at (-.5,3) {};
\node[X,label=left:$5$](4) at (-1.5,3) {};

\node[X,label=below:$3$](x) at (2.3,1.7) {};
\node[X,label=below:$1$](y) at (0.9,1.7) {};
\node[X,label=above:$2$](z) at (1.5,6) {};

\node[X,label=above:$7$](2) at (1.35,4) {};

\draw[line width=.1em](x)--(z)--(3)--(y)-- (2)--(x) (4)--(y)--(1);
\draw[line width=.1em](1)--(z)--(4);
\end{tikzpicture}
\hskip2cm
\begin{tikzpicture}
\node()at(0,-1){};
\foreach \i in {0,...,3}{
    \draw[line width=.01em](1+\i,0)--(1+\i,2);\node()at(1+\i,2.2) {\tiny\i};
}
\node[A,label=left:4](L) at (2,1) {};\node[A](R) at (3,1) {};
\draw[line width=.1em](L)--(R);
\node[A,label=left:2](L) at (4,1) {};\node[A](R) at (4,1) {};
\draw[line width=.1em](L)--(R);
\node[A,label=left:7](L) at (3,.5) {};\node[A](R) at (4,.5) {};
\draw[line width=.1em](L)--(R);
\node[A,label=left:6](L) at (2,2) {};\node[A](R) at (3,2) {};
\draw[line width=.1em](L)--(R);
\node[A,label=left:3](L) at (1,.5) {};\node[A](R) at (2,.5) {};
\draw[line width=.1em](L)--(R);
\node[A,label=left:1](L) at (1,1) {};\node[A](R) at (1,1) {};
\draw[line width=.1em](L)--(R);
\node[A,label=left:5](L) at (2,1.5) {};\node[A](R) at (3,1.5) {};
\draw[line width=.1em](L)--(R);
\end{tikzpicture}
\end{center}
\caption{The interval order $P=[0,1,0,1,1,1,2]$ has a Hilbert extender graph that is not weakly chordal}
\label{ex1}
\end{figure}
Theorem \ref{HilbertExtenderGraphIsPerfect} establishes that Hilbert extender graphs belong to the family of perfect graphs. 
However, it remains unclear whether this is a subfamily of any known small family of perfect graphs. 
Evidence suggests it is not, as there are Hilbert extender graphs that are not weakly 
chordal\footnote{\ Graphs containing no hole and no antihole of size more than 4 are called {\it weakly chordal}.}.
For instance, the interval order $P  = [0,1,0,1,1,1,2]$ in Fig.\ref{ex1} is not weakly chordal.
In its Hilbert extender graph
(of order $17$ and size $78$) the intersection of the Hilbert sets 
 $\{3,4\}$, $\{3,5\}$, $\{3,6\}$, $\{7,4\}$, $\{7,5\}$, and $\{7,6\}$,  is the complement of a $6$-cycle (that is, an antihole of size $6$).
  
A graph is {\em short-chorded} if every odd cycle has 
a chord joining two vertices of distance two in the cycle.
We  conjecture that Hilbert extender graphs are short-chorded. 
The characterization of Hilbert extender graphs is a challenging problem, and it would be also interesting to see how these structures are explaining further properties of interval families.

In this paper, we proved that the length polyhedron has a Hilbert basis composed of binary rays,
and the corresponding sets form an intersection graph known as the Hilbert extender graph, which is a perfect graph.
The fact that the Hilbert extender graph is a perfect graph is crucial for our polynomial-time algorithm that tests whether
a binary ray belongs to the Hilbert basis. An intriguing open question for future research is to characterize polyhedra
with binary Hilbert bases whose intersection graphs are perfect graphs.


\ifsidma\bibliographystyle{siamplain}\else\bibliographystyle{amsplain}\fi
\bibliography{MasterReferences}

\begin{thebibliography}{10}

\bibitem{BiKeLe}
{\sc C.~Bir\'o, A.~E. K\'ezdy, and J.~Lehel}, {\em Two-count interval
  representation of a permutation}.
\newblock April 2024.

\bibitem{BiroWan}
{\sc C.~Bir\'o and S.~Wan}, {\em Dimension bounds on classes of interval orders
  with restricted representation}.
\newblock To appear in Graphs Combin., arXiv:2111.00089, November 2023,
  \url{https://arxiv.org/abs/2111.00089}.

\bibitem{Bogart1993}
{\sc K.~P. Bogart}, {\em An obvious proof of {F}ishburn's interval order
  theorem}, Discrete Math., 118 (1993), pp.~239--242,
  \url{https://doi.org/10.1016/0012-365X(93)90065-2},
  \url{https://doi.org/10.1016/0012-365X(93)90065-2}.

\bibitem{ascent2010}
{\sc M.~Bousquet-M\'{e}lou, A.~Claesson, M.~Dukes, and S.~Kitaev}, {\em
  {$(2+2)$}-free posets, ascent sequences and pattern avoiding permutations},
  J. Combin. Theory Ser. A, 117 (2010), pp.~884--909,
  \url{https://doi.org/10.1016/j.jcta.2009.12.007},
  \url{https://doi.org/10.1016/j.jcta.2009.12.007}.

\bibitem{BoyadzhiyskaDissertation}
{\sc S.~Boyadzhiyska}, {\em Interval Orders with Restrictions on the Interval
  Lengths}, Wellesley College, 2016,
  \url{https://repository.wellesley.edu/object/ir638}.
\newblock Honors Thesis--Wellesley College.

\bibitem{BGT}
{\sc S.~Boyadzhiyska, G.~Isaak, and A.~N. Trenk}, {\em Interval orders with two
  interval lengths}, Discrete Appl. Math., 267 (2019), pp.~52--63,
  \url{https://doi.org/10.1016/j.dam.2019.06.021},
  \url{https://doi.org/10.1016/j.dam.2019.06.021}.

\bibitem{Chudnovsky}
{\sc M.~Chudnovsky, N.~Robertson, P.~Seymour, and R.~Thomas}, {\em The strong
  perfect graph theorem}, Ann. of Math. (2), 164 (2006), pp.~51 -- 229.

\bibitem{Doignon1988}
{\sc J.-P. Doignon}, {\em Sur les repr\'{e}sentations minimales des semiordres
  et des ordres d'intervalles}, Math. Sci. Humaines,  (1988), pp.~49--59, 77.

\bibitem{DoDuFa}
{\sc J.-P. Doignon, A.~Ducamp, and J.-C. Falmagne}, {\em On realizable biorders
  and the biorder dimension of a relation}, J. Mathematical Psychology, 28
  (1984), pp.~73--109.

\bibitem{DoFa}
{\sc J.-P. Doignon and J.-C. Falmagne}, {\em Spaces for the assessment of
  knowledge}, International Journal of Man-Machine Studies, 23 (1985),
  pp.~175--196.

\bibitem{DoignonPauwels}
{\sc J.-P. Doignon and C.~Pauwels}, {\em The representation cone of an interval
  order}, Math. Sci. Hum. Math. Soc. Sci.,  (2011), pp.~55--71,
  \url{https://doi.org/10.4000/msh.12061},
  \url{https://doi.org/10.4000/msh.12061}.

\bibitem{DuFa}
{\sc A.~Ducamp and J.-C. Falmagne}, {\em Composite measurment}, J. Math.
  Psych., 6 (1969), pp.~359--390.

\bibitem{Fish2+2}
{\sc P.~C. Fishburn}, {\em Intransitive indifference with unequal indifference
  intervals}, J. Mathematical Psychology, 7 (1970), pp.~144--149,
  \url{https://doi.org/10.1016/0022-2496(70)90062-3},
  \url{https://doi.org/10.1016/0022-2496(70)90062-3}.

\bibitem{FishburnBook}
{\sc P.~C. Fishburn}, {\em Interval orders and interval graphs},
  Wiley-Interscience Series in Discrete Mathematics, John Wiley \& Sons, Ltd.,
  Chichester, 1985.
\newblock A study of partially ordered sets, A Wiley-Interscience Publication.

\bibitem{FMOS}
{\sc M.~C. Francis, L.~S. Medeiros, F.~S. Oliveira, and J.~L. Szwarcfiter},
  {\em On subclasses of interval count two and on {F}ishburn's conjecture},
  Discrete Appl. Math., 323 (2022), pp.~236--251,
  \url{https://doi.org/10.1016/j.dam.2021.12.029},
  \url{https://doi.org/10.1016/j.dam.2021.12.029}.

\bibitem{G-H}
{\sc A.~Ghouila-Houri}, {\em Caract\'{e}risation des matrices totalement
  unimodulaires}, C. R. Acad. Sci. Paris, 254 (1962), pp.~1192--1194.

\bibitem{GilesPulley}
{\sc F.~Giles and W.~R. Pulleyblank}, {\em Total dual integrality and integer
  polyhedra}, Linear Algebra Appl., 25 (1979), pp.~191--196.

\bibitem{Gordon}
{\sc P.~R. Gordon}, {\em Structural {C}onsiderations for {I}nterval {O}rders
  with {L}ength {C}onstraints}, ProQuest LLC, Ann Arbor, MI, 2018,
  \url{http://gateway.proquest.com/openurl?url_ver=Z39.88-2004&rft_val_fmt=info:ofi/fmt:kev:mtx:dissertation&res_dat=xri:pqm&rft_dat=xri:pqdiss:10846063}.
\newblock Thesis (Ph.D.)--Lehigh University.

\bibitem{Greenough}
{\sc T.~L. Greenough}, {\em Representation and Enumeration of Interval Orders
  and Semiorders}, ProQuest LLC, Ann Arbor, MI, 1976,
  \url{http://gateway.proquest.com/openurl?url_ver=Z39.88-2004&rft_val_fmt=info:ofi/fmt:kev:mtx:dissertation&res_dat=xri:pqdiss&rft_dat=xri:pqdiss:7622642}.
\newblock Thesis (Ph.D.)--Dartmouth College.

\bibitem{GLS}
{\sc M.~Gr\"otschel, L.~Lov\'asz, and A.~Schrijver}, {\em The ellipsoid method
  and its consequences in combinatorial optimization}, Combinatorica, 1 (1981),
  pp.~169--197, \url{https://doi.org/10.1007/BF02579273},
  \url{https://doi.org/10.1007/BF02579273}.

\bibitem{HenkWeismantel}
{\sc M.~Henk and R.~Weismantel}, {\em The height of minimal {H}ilbert bases},
  Results Math., 32 (1997), pp.~298--303,
  \url{https://doi.org/10.1007/BF03322141},
  \url{https://doi.org/10.1007/BF03322141}.

\bibitem{HoffmanKruskal}
{\sc A.~J. Hoffman and J.~B. Kruskal}, {\em Integral boundary points of convex
  polyhedra.}, in Linear inequalities and related systems,, Annals of
  Mathematics Studies, no. 38., ,, 1956, pp.~223--246.

\bibitem{Isaak1993}
{\sc G.~Isaak}, {\em Bounded discrete representations of interval orders},
  Discrete Appl. Math., 44 (1993), pp.~157--183,
  \url{https://doi.org/10.1016/0166-218X(93)90229-H},
  \url{https://doi.org/10.1016/0166-218X(93)90229-H}.

\bibitem{Isaak2009}
{\sc G.~Isaak}, {\em Interval order representations via shortest paths}, in The
  mathematics of preference, choice and order, Stud. Choice Welf., Springer,
  Berlin, 2009, pp.~303--311,
  \url{https://doi.org/10.1007/978-3-540-79128-7\_17},
  \url{https://doi.org/10.1007/978-3-540-79128-7_17}.

\bibitem{Johnson}
{\sc D.~B. Johnson}, {\em Finding all the elementary circuits of a directed
  graph}, SIAM J. Comput., 4 (1975), pp.~77--84,
  \url{https://doi.org/10.1137/0204007}, \url{https://doi.org/10.1137/0204007}.

\bibitem{JLORS}
{\sc F.~Joos, C.~L\"{o}wenstein, F.~d.~S. Oliveira, D.~Rautenbach, and J.~L.
  Szwarcfiter}, {\em Graphs of interval count two with a given partition},
  Inform. Process. Lett., 114 (2014), pp.~542--546,
  \url{https://doi.org/10.1016/j.ipl.2014.04.002},
  \url{https://doi.org/10.1016/j.ipl.2014.04.002}.

\bibitem{dodgy}
{\sc A.~E. K\'ezdy and J.~Lehel}, {\em Infeasible 3-colorings of depth-3
  permutations}.
\newblock August 2023.

\bibitem{keygraph}
{\sc A.~E. K\'ezdy and J.~Lehel}, {\em The key graph of an interval order}.
\newblock April 2024.

\bibitem{KeLe}
{\sc A.~E. K\'ezdy and J.~Lehel}, {\em The {S}chrijver system of the length
  polyhedron of an interval order}.
\newblock September 2024.

\bibitem{Pap}
{\sc J.~Pap}, {\em Recognizing conic {TDI} systems is hard}, Math. Program.,
  128 (2011), pp.~43--48, \url{https://doi.org/10.1007/s10107-009-0294-5},
  \url{https://doi.org/10.1007/s10107-009-0294-5}.

\bibitem{Pirlot}
{\sc M.~Pirlot}, {\em Synthetic description of a semiorder}, Discrete Appl.
  Math., 31 (1991), pp.~299--308,
  \url{https://doi.org/10.1016/0166-218X(91)90057-4},
  \url{https://doi-org.echo.louisville.edu/10.1016/0166-218X(91)90057-4}.

\bibitem{Schrijver}
{\sc A.~Schrijver}, {\em Combinatorial optimization. {P}olyhedra and
  efficiency. {V}ol. {A}}, vol.~24 of Algorithms and Combinatorics,
  Springer-Verlag, Berlin, 2003.
\newblock Paths, flows, matchings, Chapters 1--38.

\bibitem{ScottSuppes}
{\sc D.~Scott and P.~Suppes}, {\em Foundational aspects of theories of
  measurement}, J. Symbolic Logic, 23 (1958), pp.~113--128,
  \url{https://doi.org/10.2307/2964389},
  \url{https://doi-org.echo.louisville.edu/10.2307/2964389}.

\bibitem{SeboSurvey}
{\sc A.~Seb\H{o}}, {\em Hilbert bases, caratheodory's theorem and combinatorial
  optimization}, Proceedings of the IPCO Conference,  (1990), pp.~431--455,
  \url{https://pagesperso.g-scop.grenoble-inp.fr/~seboa/sebo_files/papers/ipco1hb}.

\bibitem{TrotterBook}
{\sc W.~T. Trotter}, {\em Combinatorics and partially ordered sets}, Johns
  Hopkins Series in the Mathematical Sciences, Johns Hopkins University Press,
  Baltimore, MD, 1992.
\newblock Dimension theory.

\bibitem{DougBook}
{\sc D.~B. West}, {\em Introduction to graph theory}, Prentice Hall, Inc.,
  Upper Saddle River, NJ, 1996.

\end{thebibliography}

\end{document}